\numberwithin{equation}{section}
\newtheorem{proposition}{Proposition}[section]
\newtheorem{lemma}[proposition]{Lemma}
\newtheorem{theorem}[proposition]{Theorem}
\newtheorem{corollary}[proposition]{Corollary}
\theoremstyle{definition}
\newtheorem{remark}[proposition]{Remark}
\newtheorem{definition}[proposition]{Definition}
\newtheorem{example}[proposition]{Example}
\DeclareMathOperator{\Bl}{Bl}
\DeclareMathOperator{\Aut}{Aut}
\DeclareMathOperator{\Proj}{Proj}
\renewcommand{\L}{\mathcal{L}}
\renewcommand{\phi}{\varphi}
\DeclareMathOperator{\DF}{DF}
\DeclareMathOperator{\Supp}{Supp}
\DeclareMathOperator{\Pic}{Pic}
\DeclareMathOperator{\ord}{ord}
\DeclareMathOperator{\lct}{lct}
\newcommand{\N}{\mathbb{N}}
\newcommand{\R}{\mathbb{R}}
\newcommand{\C}{\mathbb{C}}
\newcommand{\Z}{\mathbb{Z}}
\newcommand{\Q}{\mathbb{Q}}
\newcommand{\pr}{\mathbb{P}}
\renewcommand{\epsilon}{\varepsilon}
\newcommand{\scO}{\mathcal{O}}
\newcommand{\X}{\mathcal{X}}
\newcommand{\F}{\mathcal{F}}
\newcommand{\Y}{\mathcal{Y}}
\DeclareMathOperator{\Vol}{Vol}
\DeclareMathOperator{\wt}{wt}
\newcommand{\E}{\mathcal{E}}
\DeclareMathOperator{\NA}{NA}
\DeclareMathOperator{\INA}{I}
\DeclareMathOperator{\JNA}{J}
\DeclareMathOperator{\HNA}{H}
\def\mX{\mathcal{X}} 
 \def\mL{\mathcal{L}}
\def\ra{\rightarrow }
\newcommand{\barycenter}{\mbox{bar}}
\title[Valuative stability of polarised varieties]{Valuative stability of polarised varieties}
\author[Ruadha\'i Dervan and Eveline Legendre]{Ruadha\'i Dervan and Eveline Legendre}
\address{Ruadha\'i Dervan, DPMMS, Centre for Mathematical Sciences, Wilberforce Road, Cambridge CB3 0WB, United Kingdom}
\email{R.Dervan@dpmms.cam.ac.uk}
\address{Eveline Legendre, Universit\'e Paul Sabatier, Institut de Math\'ematiques de Toulouse, 118 route de Narbonne, France}
\email{ eveline.legendre@math.univ-toulouse.fr }
\begin{document}

\begin{abstract}  

Fujita and Li have given a characterisation of K-stability of a Fano variety in terms of quantities associated to valuations, which has been essential to all recent progress in the area. We introduce a notion of valuative stability for arbitrary polarised varieties, and show that it is equivalent to K-stability with respect to test configurations with integral central fibre. The numerical invariant governing valuative stability is modelled on Fujita's $\beta$-invariant, but includes a term involving the derivative of the volume. We give several examples of valuatively stable and unstable varieties, including the toric case. We also discuss the role that the $\delta$-invariant plays in the study of valuative stability and K-stability of polarised varieties.

\end{abstract}

\maketitle

\section{Introduction}

The  notion of K-stability of a polarised variety (i.e. a projective variety endowed with an ample line bundle) has played a central role in algebraic geometry in recent years. The primary motivation for K-stability is the Yau-Tian-Donaldson conjecture, which states that K-stability should be equivalent to the existence of constant scalar curvature K\"ahler metrics on the polarised variety \cite{yau, tian-inventiones, donaldson-toric}, and also predicts that one should be able to form moduli spaces of K-stable polarised varieties.

While this conjecture is completely open in general, there has been enormous progress on these ideas in the case of Fano varieties. Analytically, it is now known that K-stability is equivalent to the existence of a K\"ahler-Einstein metric on a smooth Fano variety \cite{CDS, tian-inventiones, berman}. Algebraically, the theory has advanced massively, primarily through Fujita and Li's reinterpretation of K-stability in terms of valuations  \cite{fujita-valuative, li}. These ideas, together with significant input from birational geometry, have led to an almost-complete understanding of K-stability of Fano varieties. This is true both abstractly, in the sense that one can now construct moduli spaces of K-stable Fano varieties (though properness remains open\footnote{Properness has now been proven by Liu-Xu-Zhuang \cite{LXZ}.}), and concretely, in the sense that one can now give a very thorough understanding of which Fano varieties are actually K-stable. There are many results along these lines, such as \cite{blum-xu, codogni-patakfalvi, fujita-alpha, AZ} to name only a few.  The valuative approach to K-stability of Fano varieties has been essential to all of these developments.

From this perspective, one of the main issues in understanding K-stability of an arbitrary polarised variety is that we do not yet understand the role played by valuations. The original definition of K-stability, due to Donaldson and building on work Tian, involves \emph{test configurations}: these are $\C^*$-degenerations of the polarised variety $(X,L)$ to another polarised scheme (called the central fibre). Donaldson then assigns a numerical invariant to a test configuration, now called the Donaldson-Futaki invariant, and K-stability means that this invariant is always positive. Fujita and Li reinterpret K-stability by replacing test configurations with valuations on $X$, and replacing the Donaldson-Futaki invariant with numerical invariants associated to the volume and log discrepancy of the valuation \cite{fujita-valuative, li}. 

Our main result gives a complete understanding of how valuations can be used to study K-stability of polarised varieties, primarily based on the ideas of Fujita \cite{fujita-valuative}. We briefly give the definition, before stating the main results. Let $(X,L)$ be an  $n$-dimensional polarised variety and let $F$ be a prime divisor over $X$. Denote by $A_X(F)$ the log discrepancy of $F$, and $\Vol$ the volume function. We define the $\beta$\emph{-invariant} of $F$ by $$\beta(F) = A_X(F)\Vol(L) + n \mu \int_{0}^{\infty} \Vol(L-xF)dx + \int_0^{\infty}{\Vol}(L-xF)'\cdot K_X dx,$$ where ${\Vol}(L-xF)'\cdot K_X $ denotes the \emph{derivative} of the volume in the direction $K_X$, and $$\mu = \mu(X,L) = \frac{-K_X.L^{n-1}}{L^n}$$ is a topological constant. In comparison with Fujita's invariant, the main novelty is the appearance of the derivative of the volume; we note that in general the volume is a continuously differentiable function \cite{BFJ}. As in Fujita's work, an important class of divisorial valuations are those that are \emph{dreamy}; this is a finite generation hypothesis. We then say that $(X,L)$ is \emph{valuatively stable} if $\beta(F)>0$ for all dreamy divisorial valuations $F$. Our main result demonstrates the relationship between valuative stability and K-stability.

\begin{theorem}\label{thm:intromainthm} A polarised variety is valuatively stable if and only it is K-stable with respect to test configurations with irreducible central fibre.  \end{theorem}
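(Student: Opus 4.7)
The plan is to establish a dictionary between dreamy divisorial valuations on $(X,L)$ and (normalised) test configurations with integral central fibre, and then to prove that the numerical invariants match, i.e. that $\DF(\X,\L) = c \cdot \beta(F)$ for some positive constant $c$ depending only on $n$ and the invariants of $F$. Given such an identity, the equivalence in the theorem is immediate, since the sign of $\beta(F)$ on all dreamy divisorial valuations determines the sign of $\DF$ on all test configurations with integral central fibre, and conversely.

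The first step is the correspondence. In one direction, given a dreamy divisorial valuation $F$ on $X$, the filtration of the section ring $R(X,L)=\bigoplus_m H^0(X,mL)$ by order of vanishing along $F$ has finitely generated Rees algebra (this is precisely what dreaminess encodes), and taking $\Proj$ produces a test configuration $(\X,\L) \to \A^1$ whose normalisation has integral central fibre corresponding to the divisor $\mF$ over which $F$ is computed. In the other direction, given a test configuration $(\X,\L)$ with irreducible central fibre $\X_0$, after normalising, $\X_0$ becomes a prime divisor, so $\ord_{\X_0}$ defines a divisorial valuation on the function field $\C(\X) = \C(X \times \C^*)$; restricting to $\C(X)$ gives a divisorial valuation $F$ on $X$, and finite generation of $R(\X,\L)$ forces $F$ to be dreamy. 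The bijectivity of these constructions (up to scaling $F$ by a positive rational) follows from Boucksom--Jonsson's description of the non-Archimedean setup, or directly as in Fujita's original treatment.

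The second and technical step is to identify the intersection numbers on $(\X,\L)$ appearing in $\DF$ with the integrals appearing in $\beta(F)$. Writing $\DF$ in terms of $\bar\L^{n+1}$ and $K_{\bar\X/\pr^1}\cdot\bar\L^n$ on the natural compactification $\bar\X \to \pr^1$, one expresses
\[
\frac{\bar\L^{n+1}}{n+1} = A_X(F)\Vol(L) - (n+1)\int_0^\infty \Vol(L-xF)\,dx
\]
by Fujita's integration formula, where the $A_X(F)\Vol(L)$ term arises from the relative canonical divisor of the normalisation (via the standard identification of the log discrepancy with the coefficient of $K_{\bar\X/\bar\X^{\mathrm{triv}}}$ along the central fibre). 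The term $K_{\bar\X/\pr^1}\cdot\bar\L^n$ is then rewritten as $\int_0^\infty \Vol'(L-xF)\cdot K_X \, dx$, using the Boucksom--Favre--Jonsson differentiability of the volume function on the big cone to interpret the derivative, and using that along the test configuration the class $\bar\L - x\mF$ traces out precisely the ray $L - xF$ in $N^1(X)$. Combining these and collecting the topological constant $\mu$ yields $\DF(\X,\L) = c\cdot \beta(F)$.

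The main obstacle is the derivative term $\int_0^\infty \Vol'(L-xF)\cdot K_X\,dx$, which is absent in the Fano case treated by Fujita: here $K_X$ is no longer a multiple of $L$, so one cannot simply apply the identity $\frac{d}{dx}\Vol(L-xF) = -n\Vol'(L-xF)\cdot F$ to collapse the $K_X$ contribution into the $\Vol$ integral. One must instead justify the Riemann integral by showing that $x \mapsto \Vol'(L-xF)\cdot K_X$ is continuous on $\{x : L-xF \text{ big}\}$, vanishes outside a bounded interval, and computes the honest intersection $K_{\bar\X/\pr^1}\cdot\bar\L^n$. This reduces, via approximation of $F$ by pull-backs of ample divisors on birational models, to a piecewise-polynomial calculation for Cartier test divisors, where the identification is classical; the continuity of the derivative then passes to the limit. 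Once this step is in place, the theorem follows.
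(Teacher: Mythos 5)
Your overall strategy is the same as the paper's: set up the Fujita-style dictionary between dreamy divisorial valuations and test configurations with integral central fibre, and then prove that $L^n\DF(\X,\L)=\beta(F)$ so that the two stability notions transfer. Both directions of your dictionary (Rees algebra/$\Proj$ construction one way, restriction of $\ord_{\X_0}$ to $\C(X)$ after a resolution of indeterminacy the other way) are exactly the constructions used in the paper, and your "main obstacle" paragraph correctly identifies the new technical point relative to the Fano case --- making sense of $\int_0^\infty\Vol'(L-xF).K_X\,dx$ and matching it with an honest intersection number. The mechanism you propose for this (reduce to birational models on which $L-xF$ is ample, where the relevant quantities are piecewise polynomial in $x$, then use \cite{BFJ} to pass to the limit) is precisely what the paper does via the Kaloghiros--K\"uronya--Lazi\'c geography of ample models; the paper phrases the computation through the coefficients of the weight polynomial rather than through the Odaka--Wang intersection formula, but these are equivalent bookkeepings.

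There is, however, one concrete error in your displayed identity: the term $A_X(F)\Vol(L)$ does not belong in the formula for $\L^{n+1}/(n+1)$. The top self-intersection of $\L$ is determined by the leading coefficient of the weight polynomial alone and is computed purely from the volume function; in the normalisation $\lambda_{\max}=0$ one gets
\[
\frac{\L^{n+1}}{n+1}=\int_0^{\tau(F)}\Vol(L-xF)\,dx-\tau(F)\Vol(L),
\]
with no log discrepancy and no extra factor of $(n+1)$ in front of the integral. The $A_X(F)\Vol(L)$ contribution to $\beta(F)$ arises instead from the term $K_{\X/\pr^1}.\L^n$: writing $K_{\X/\pr^1}=K_{\X/X\times\pr^1}+\rho^*K_X$ on a resolution of indeterminacy, the divisor $K_{\X/X\times\pr^1}$ is supported on the central fibre with coefficient governed by $A_X(F)$, and intersecting with $\L^n$ produces $A_X(F)\Vol(L)$ (up to the correction absorbed by the fundamental theorem of calculus identity $\int_0^{\tau(F)}\Vol'(L-xF).F\,dx=L^n$), while $\rho^*K_X.\L^n$ produces the derivative-of-volume integral. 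You in fact say the $A_X(F)$ term "arises from the relative canonical divisor," so this is a misplacement rather than a conceptual misunderstanding, but as written the two sides of your equation do not match and carrying the plan through literally would give the wrong identity between $\DF$ and $\beta$. With that term moved to the canonical-class side, the plan goes through and agrees with the paper's proof.
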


This fully explains the role played by valuations in the study of K-stability of polarised varieties. We also prove analogous results for K-semistability, (equivariant) K-polystability and uniform K-stability.  Our proof is modelled on that of Fujita \cite{fujita-hyperplane, fujita-valuative}, and the primary differences arise from the fact that the Donaldson-Futaki invariant takes a significantly simpler form in the Fano setting; this explains the appearance of the derivative of the volume in the $\beta$-invariant. 

In general one should \emph{not} expect that valuative stability is equivalent to K-stability, and equivariant versions of this statement fail in the toric setting. Nevertheless, a deep result of Li-Xu states that for K-stability of \emph{Fano} varieties $(X,-K_X)$, it \emph{is} equivalent to check K-stability with respect to test configurations with irreducible central fibre \cite{li-xu}. Thus Fujita's work implies that valuative stability of Fano varieties is equivalent to K-stability. This therefore explains, from the point of view of valuations, the difference between the Fano theory and the general theory. Moreover, test configurations with smooth, hence irreducible, central fibre play an important role in many analytic works concerning the existence of constant scalar curvature K\"ahler metrics \cite{chen-sun, szekelyhidi}, and hence one should expect Theorem \ref{thm:intromainthm} to be a powerful tool. Theorem \ref{thm:intromainthm} in any case produces a concrete obstruction to K-stability of polarised varieties, which we expect to play a similar role to Ross-Thomas' slope stability \cite{ross-thomas}, in that the resulting criterion should be practically checkable in concrete examples.

Beyond the case of anticanonically polarised Fano varieties, Delcroix has recently shown that there are certain polarised spherical varieties $(X,L)$ for which the existence of a constant scalar curvature K\"ahler metric is equivalent to equivariant K-polystability with respect to test configurations with irreducible central fibre \cite[Section 11]{TD}, and is hence also equivalent to K-polystability \cite{BDL}. Thus we obtain a full valuative criterion for  K-polystability also in this case. Delcroix's examples have polarisations close to the anticanonical class $-K_X$ in their ample cone, and he suggests that an analogous statement should be true for arbitrary polarised spherical varieties, provided the polarisation is sufficiently close to the anticanonical class. Going even further, it seems reasonable to suggest that for general polarisations of Fano varieties $(X,L)$, provided the polarisation is sufficiently close to $-K_X$, it may be the case that K-polystability is equivalent to K-polystability with respect to test configurations with irreducible central fibre, generalising the result of Li-Xu \cite{li-xu} and meaning our valuative criterion would characterise K-polystability for more general polarisations of Fano varieties. 

We hope that some of the numerous applications that the valuative approach to K-stability of Fano varieties can be applied to general polarised varieties through Theorem \ref{thm:intromainthm}, and we plan to return to this in future work. In the present work, we prove some foundational results along these lines. For example, we show that Calabi-Yau varieties and canonically polarised varieties are uniformly valuatively stable provided they have mild singularities. This follows from Theorem \ref{thm:intromainthm} and work of Odaka, but demonstrates how one should use the $\beta$-invariant for general polarised varieties. We also prove alpha invariant bounds modelled on work of Fujita-Odaka \cite{fujita-odaka}.

We also give a complete geometric description in the toric case. For this we take $(X,L)$ to be a $\Q$-factorial polarised toric variety. We then call $F$ a {\it toric prime divisor over} $X$ if there is a normal compact toric variety $Y$ and $\psi: Y\ra X$ a proper birational toric morphism whose exceptional set coincides with $F$, a toric prime divisor on $Y$. By considering only valuations emanating from toric prime divisors we obtain a weak notion of equivariant valuative semistability, which turns out to be equivalent to the classical Futaki invariant~\cite{futaki}, which is a function on holomorphic vector fields on $X$. We then say that the Futaki invariant  vanishes identically if it vanishes for each holomorphic vector field.

\begin{theorem}\label{prop:toricVal=Fut} The Futaki invariant of $(X,L)$ vanishes identically (on the torus) if and only if $$\beta(F) \geq 0$$ for any toric prime divisor $F$ over $X$. 
\end{theorem}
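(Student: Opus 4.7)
The plan is to prove both directions via an explicit polytope computation expressing $\beta(F)$ for a toric prime divisor in combinatorial terms, and then to match the result with Donaldson's formula for the classical Futaki invariant on a toric variety. Let $P \subset M_\R$ be the moment polytope of $(X,L)$, with facet-defining inequalities $\langle m, u_\rho\rangle \geq -a_\rho$ indexed by rays $\rho$ of the fan $\Sigma_X$. Any toric prime divisor $F$ over $X$ corresponds to a primitive vector $v \in N$; we denote it $F_v$. Using the standard toric dictionary, the log discrepancy $A_X(F_v)$ is the value at $v$ of a piecewise-linear, positively homogeneous function on $N_\R$ which takes the value $1$ on each primitive generator of a ray of $\Sigma_X$; the volume satisfies $\Vol(L - xF_v) = n!\,\Vol_n\bigl(P \cap \{\langle \cdot, v\rangle \geq c_0(v) + x\}\bigr)$ with $c_0(v) = \min_P \langle\cdot,v\rangle$; and the directional derivative $\Vol'(L - xF_v).K_X$ decomposes, via $K_X = -\sum_\rho D_\rho$, into a signed sum of facet volumes of the truncated polytopes.

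Applying Fubini to convert the $x$-integrals in the definition of $\beta(F_v)$ into integrals over $P$ and $\partial P$, after simplification the result matches (up to a positive multiplicative constant) Donaldson's formula for the classical Futaki invariant applied to the holomorphic vector field $\xi_v$ on the torus associated to $v$:
$$\beta(F_v) \;=\; c\cdot \mathrm{Fut}(\xi_v).$$
The forward direction is then immediate: if the Futaki invariant vanishes identically on the torus, then $\beta(F_v) = 0 \geq 0$ for every toric prime divisor over $X$.

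For the converse, observe that for \emph{any} primitive $v \in N$ one can refine $\Sigma_X$ so as to contain the ray generated by $v$, yielding a toric prime divisor $F_v$ over $X$; applying the same construction to $-v$ gives $F_{-v}$. By linearity of the Futaki invariant in $\xi$,
$$\beta(F_v) + \beta(F_{-v}) \;=\; c\bigl(\mathrm{Fut}(\xi_v) + \mathrm{Fut}(\xi_{-v})\bigr) \;=\; 0,$$
so the hypotheses $\beta(F_v), \beta(F_{-v}) \geq 0$ force $\beta(F_v) = 0 = \mathrm{Fut}(\xi_v)$. As $v \in N$ was arbitrary and $N_\R$ is the Lie algebra of the torus, this shows that $\mathrm{Fut}$ vanishes identically.

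The main obstacle will be the explicit algebraic matching in the middle step, particularly the handling of the derivative-of-volume term. The delicate point is that $\Vol'(L - xF_v).K_X$ admits a clean intersection-theoretic description only where $L - xF_v$ remains nef; one controls the calculation by breaking $[0,\infty)$ into subintervals on which the combinatorial type of the polytope truncation is constant, using the standard formula $\Vol'(L').D = n(L')^{n-1}.D$ on each piece, and reassembling the contributions into boundary integrals against $-\sum_\rho D_\rho$ over $\partial P$. One must also check that the piecewise-linearity of $A_X$ and of $c_0(v)$ combines consistently with the resulting boundary terms to produce the single linear functional $\mathrm{Fut}(\xi_v)$.
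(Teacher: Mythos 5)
There is a genuine gap: your central identity $\beta(F_v) = c\cdot\mathrm{Fut}(\xi_v)$ is false for toric prime divisors over $X$ whose ray does not already belong to $\Sigma(1)$. The correct formula (Equation \eqref{e:betaTORICdivisors} in the paper) is
$$\frac{\beta_{L}(F_v)}{\Vol_\Sigma(\partial P_L)} = \bigl(A_X(F_v) -1\bigr)\frac{\Vol_M (P_L)}{\Vol_\Sigma(\partial P_L)} + \langle b_{P_L}-b_{\partial P_L}, v \rangle,$$
and the first term is strictly positive whenever $v\notin\Sigma(1)$, since then $v=\sum_{i=1}^r c_iu_i$ with $r\geq 2$ and $c_i\in\N^*$, so $A_X(F_v)=\sum c_i\geq 2$. (A quick sanity check that your identity cannot hold: for $X=\pr^2$ with $L=-K_X$ it would force $\beta(F_v)=0$ for every toric divisor over $X$, contradicting the strict valuative stability of $\pr^2$.) The piecewise-linear, non-linear term $A_X(F_v)$ does not cancel against the volume integrals; only the barycentre difference $\langle b_{P_L}-b_{\partial P_L},v\rangle$ is linear in $v$. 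Consequently your forward direction survives (the extra term is non-negative, so vanishing of the Futaki invariant still gives $\beta(F_v)\geq 0$), but your converse collapses: for general primitive $v$ one has $\beta(F_v)+\beta(F_{-v}) = \bigl(A_X(F_v)+A_X(F_{-v})-2\bigr)\Vol_M(P_L)$, which is typically strictly positive (e.g.\ $v=e_1$, $-v=-e_1$ on $\pr^2$ gives $1+2-2=1$), so you cannot conclude $\beta(F_v)=0$.

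The fix is the one the paper uses: restrict the converse to the divisors $D_\rho$ with $\rho\in\Sigma(1)$, for which $A_X(D_\rho)=1$ and hence $\beta_L(D_\rho)=\Vol_\Sigma(\partial P_L)\,\langle b_{P_L}-b_{\partial P_L},u_\rho\rangle$ exactly. Completeness of $\Sigma$ gives positive reals $t_\rho$ with $\sum_\rho t_\rho u_\rho=0$, whence $0\leq\sum_\rho t_\rho\beta_L(D_\rho)=0$ and each $\beta_L(D_\rho)=0$; since the $u_\rho$ span $N_\R$, the linear functional $\langle b_{P_L}-b_{\partial P_L},\cdot\rangle$ vanishes identically. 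Your combinatorial translations of $A_X$, of $\Vol(L-xF_v)$ as a truncated polytope volume, and of $\Vol'(L-xF_v).K_X$ via subdivision of $[0,\tau]$ into intervals of constant combinatorial type are all consistent with the paper's Lemmas \ref{l:FormulaVolume} and \ref{l:barycenters=divisorial}; it is only the final assembled identity, and the $v/(-v)$ cancellation built on it, that fail.
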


This extends Fujita's result on toric divisorial stability of Fano varieties \cite{fujita-plms}. The proof uses the expression of the classical Futaki invariant of $(X,L)$ as the difference of the barycentres of the the moment polytope and its boundary exhibited in Donaldson's work \cite{donaldson-toric}.  

Beyond the work of Fujita and Li, perhaps the most important foundational development in the study of valuative stability of Fano varieties has been Fujita-Odaka's introduction of the $\delta$-invariant $\delta(L)$ \cite{fujita-odaka}, proved by Blum-Jonsson to equal $$\delta(L) = \inf_F \frac{A_X(F)\Vol(L)}{\int_0^{\infty} \Vol(L-xF) dx}, $$ where the infimum is taken over all prime divisors $F$ over $X$ \cite{blum-jonsson}. It follows from work of Fujita-Odaka and Blum-Jonsson that $\delta(-K_X)\geq 1$ if and only if $(X,-K_X)$ is K-semistable, with $\delta(-K_X)>1$ characterising uniform K-stability. While it is clear from our definition of $\beta(F)$ for general polarised varieties that $\delta(L)$ plays an important role, it is natural to ask whether or not a condition on $\delta(L)$ actually characterises valautive stability more generally. While this seems unlikely, we show that one can provide \emph{sufficient} conditions for valuative stability in terms of $\delta(L)$:

\begin{theorem}\label{thm:introdelta} Write $\delta(L) - \mu(L) =(n+1)\gamma(L)$, and suppose that the line bundle $$(\mu(L) + \gamma(L)) L + K_X$$ is effective. Then $(X,L)$ is uniformly valuatively stable.

\end{theorem}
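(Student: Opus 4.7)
The plan is to exploit the effectivity of $M := (\mu(L) + \gamma(L)) L + K_X$ to bound the derivative term in $\beta(F)$ from below, reducing the problem to the Blum--Jonsson characterisation of $\delta(L)$. Since $M$ is effective and hence pseudoeffective, and since the volume function is continuously differentiable on the big cone by Boucksom--Favre--Jonsson (so that its differential $\Vol'(\cdot)$ pairs non-negatively with pseudoeffective classes), for every $x$ with $L-xF$ big one has
\[
\Vol'(L-xF)\cdot K_X \;\geq\; -(\mu(L)+\gamma(L))\,\Vol'(L-xF)\cdot L.
\]

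Euler's identity for the $n$-homogeneous volume yields $\Vol'(L-xF)\cdot L = n\Vol(L-xF) - x\tfrac{d}{dx}\Vol(L-xF)$. Integrating the displayed inequality over $x\in[0,\infty)$, applying integration by parts to the $x\tfrac{d}{dx}$ term (the boundary terms vanish because $\Vol(L-xF)$ is zero past the pseudoeffective threshold of $F$), and substituting into the definition of $\beta(F)$ using the identity $\mu(L)+(n+1)\gamma(L)=\delta(L)$, yields
\[
\beta(F) \;\geq\; A_X(F)\Vol(L) - \delta(L)\int_0^\infty \Vol(L-xF)\,dx.
\]
The right-hand side is non-negative by the Blum--Jonsson formula recalled in the introduction, so $\beta(F)\geq 0$ for every dreamy divisorial valuation.

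The main obstacle is to promote this inequality to the genuine uniform statement. Pairing $M$ with $L^{n-1}$ gives $\gamma(L) L^n \geq 0$, with strict inequality whenever $M$ is non-trivially effective, so $\delta(L) > \mu(L)$. I expect to upgrade the semistability bound above to $\beta(F)\geq \epsilon\,\|F\|_{\mathrm{NA}}$ either by perturbing $\gamma$ slightly in the integration-by-parts step, exploiting some openness in the effectivity hypothesis, or by combining the bound with a quantitative strengthening of the Blum--Jonsson inequality for dreamy valuations. The calculus underlying the main estimate is routine once BFJ differentiability and Euler's identity are in hand; the delicate point is extracting the explicit uniform constant $\epsilon$ from the effectivity of $M$.
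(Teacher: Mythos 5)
Your argument is essentially the paper's: both rest on rearranging $\beta(F)$ so as to isolate the direction $\mu L + K_X$ (this is Corollary \ref{cor:fuj}), on the identity $\int_0^\infty \Vol'(L-xF).L\,dx=(n+1)\int_0^\infty\Vol(L-xF)\,dx$ coming from homogeneity of the volume together with integration by parts, on the non-negativity of the volume derivative in pseudoeffective directions, and on the definition of $\delta(L)$. Your computation correctly yields $\beta(F)\geq A_X(F)\Vol(L)-\delta(L)\int_0^\infty\Vol(L-xF)\,dx\geq 0$, i.e.\ valuative semistability. One small caveat: the inequality $\Vol'(L-xF).M\geq 0$ for $M$ pseudoeffective is not a formal consequence of the $C^1$-differentiability of the volume; it combines differentiability with monotonicity of the volume along effective directions, and is \cite[Corollary C]{BFJ}, which is what the paper cites.

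The only genuinely missing step is the uniformity, which is the actual content of the statement; the first of the three options you list is exactly how the paper resolves it, so the gap is small but should be closed. Since effectivity is an open condition in the N\'eron--Severi group, write $(n+1)\gamma=(n+1)\gamma'+\epsilon$ with $\epsilon>0$ chosen so that $(\mu+\gamma')L+K_X$ is still effective. Running your estimate with $\gamma'$ in place of $\gamma$ leaves a surplus: using $A_X(F)\Vol(L)\geq\delta\int_0^\infty\Vol(L-xF)\,dx$ and the integration-by-parts identity,
\begin{align*}
\beta(F)&\geq(n+1)\gamma\int_0^\infty\Vol(L-xF)\,dx+\int_0^\infty\Vol'(L-xF).(\mu L+K_X)\,dx\\
&=\epsilon\int_0^\infty\Vol(L-xF)\,dx+\int_0^\infty\Vol'(L-xF).\bigl((\mu+\gamma')L+K_X\bigr)\,dx\\
&\geq\epsilon L^n S(F),
\end{align*}
the last inequality because the second integrand is non-negative by \cite[Corollary C]{BFJ}. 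Proposition \ref{prop:lipshitz-norms} (Lipschitz equivalence of $S(F)$, $\tau(F)$ and $j(F)$) then converts this into $\beta(F)\geq\epsilon' j(F)$ for some $\epsilon'>0$ independent of $F$, which is the definition of uniform valuative stability. Your other two suggestions are unnecessary; in particular no quantitative strengthening of the Blum--Jonsson formula is required, and the constant $\epsilon$ is extracted purely from the openness of the effectivity hypothesis rather than from any refinement of the $\delta$-invariant.
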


Note that this recovers one direction of Fujita-Odaka's work as a special case: when $L=-K_X$ and $\delta(-K_X)>1$, $\gamma$ is strictly positive and hence $(\mu + \gamma) L + K_X = \gamma L$ is indeed effective. The hypothesis is reminiscent of the sufficient criterion for uniform K-stability of general polarisations of Fano varieties due to the first author \cite{alpha} (and \cite[Theorem 1.9]{twisted}); the effectivity hypothesis, however, has a slightly different flavour. It is interesting to ask whether one can prove that under the hypothesis of Theorem \ref{thm:introdelta} that $(X,L)$ is actually uniformly K-stable, or admits a constant scalar curvature K\"ahler metric. We refer work of K. Zhang \cite[Corollary 6.13]{zhang} for a closely related analytic result, and also note that, under the alpha invariant hypotheses just mentioned, the existence of such a metric is now known \cite{mabuchi, chen-cheng}. 

To this end, we remark that K. Zhang has recently introduced an \emph{analytic} counterpart $\delta^A(L)$ to the $\delta$-invariant, and has made the strong conjecture that the two invariants agree \cite{zhang}\footnote{K. Zhang has recently established his conjecture \cite{KZ2}.}. Roughly speaking, $\delta^A(L)$ is the optimal constant for which the entropy term in the Mabuchi functional on the space of K\"ahler metrics dominates the $(I-J)$-functional. Thus one can ask whether, under the hypotheses of Theorem \ref{thm:introdelta} but replacing the bound on $\delta(L)$ with one on $\delta^A(L)$, that $(X,L)$ admits a cscK metric; this seems to require new ideas in comparison with the corresponding result concerning the alpha invariant \cite{mabuchi}, and perhaps suggests that there are important properties of $\delta(L)$ yet to be discovered. 

We finally remark that K. Zhang's invariant has another more direct algebro-geometric invariant, defined as the optimal constant for which the discrepancy term of the Donaldson-Futaki invariant (defined in Equation \eqref{HNA}) dominates the minimum norm: \begin{equation}\delta^H(L) = \inf_{(\X,\L)} \frac{\HNA(\X,\L)\Vol(L)}{\|(\X,\L)\|_m},\end{equation} with the infimum taken over all test configurations.  It would follow from Zhang's conjecture and other conjectures surrounding the Yau-Tian-Donaldson conjecture that all three $\delta$-invariants agree, and it is again natural to ask whether one can prove that under the hypotheses of Theorem \ref{thm:introdelta} but replacing the bound on $\delta(L)$ with one on $\delta^A(L)$, $(X,L)$ is uniformly K-stable; this would give some evidence for K. Zhang's conjecture, but also seems to require new ideas.

\subsection*{Outline} In Section \ref{sec:prelims} define the various notions of stability relevant to us, most centrally valuative stability and K-stability. In Section \ref{sec:proofs} we prove Theorem \ref{thm:intromainthm} and variants for other notions of stability, such as K-polystability. In particular \ref{cor:onedirection} proves valuative semistability implies K-semistability with respect to test configurations with integral central fibre, with Lemma \ref{lemma:norm} relating the norms and demonstrating that uniform valuative stability implies uniform K-stability with respect to the same class of test configurations. Proposition \ref{prop:converse} and Lemma \ref{lem:converse} give the converse. Section \ref{sec:examples} provides various examples, including a proof of Theorem \ref{thm:introdelta}, while Section \ref{sec:toric} considers the toric setting, including a proof of Theorem \ref{prop:toricVal=Fut}.

\subsection*{Acknowledgements:} We thank Ivan Cheltsov, Thibaut Delcroix, Kento Fujita and Yaxiong Liu for helpful comments, and the referee for their suggestions.

\subsection*{Notation:} We work throughout over the complex numbers, though  everything goes through over an algebraically closed field of characteristic zero. All varieties are taken to be normal.

\section{Preliminaries}\label{sec:prelims}
\subsection{Valuations and associated invariants} We define the invariants associated to valuations of interest to us, and refer to \cite{Lazarsfeld1} or the work of Fujita for an introduction.

Let $X$ and $Y$ be normal projective variety, and let $\pi: Y \to X$ be a surjective birational morphism, with $X$ and $Y$ of dimension $n$. 

\begin{definition} A prime divisor $F \subset Y$ for some $Y$ is called a \emph{prime divisor over} $X$.\end{definition}

We view $F$ as defining a divisorial valuation on $X$; in particular, the information associated to $F$ which we will be concerned with depends only on the valuation associated to $F$. In particular, one can always take $Y$ to be smooth by passing to a resolution of singularities.

Define a vector subspace $H^0(X, kL-xF)  \subset H^0(X, kL)$ via the identifications  $$H^0(X, kL-xF) = H^0(Y,k\pi^*L - xF) \subset H^0(Y,k\pi^*L) \cong H^0(X,kL),$$ where we note that the last isomorphism is canonical.

\begin{definition} For $x \in \R$, we define the \emph{volume} of $L-xF$ to be $$\Vol(L-xF) := \limsup_{k \to \infty}\frac{\dim H^0(X,kL-\left \lfloor{kx}\right \rfloor 
F)}{k^n/n!}.$$\end{definition}

A basic property is that the limsup defining the volume is actually a limit \cite[Remark 2.2.50]{Lazarsfeld1}. The volume function extends by homogeneity from $\Pic(X)$ to $\Pic_{\Q}(X)$, and continuously from $\Pic_{\Q}(X)$ to $\Pic_{\R}(X)$. The \emph{big cone} of $X$ is the locus inside $\Pic_{\R}(X)$ of $\R$-line bundles with positive volume; this is an open cone. 

\begin{theorem}\cite{BFJ} The volume is a continuously differentiable function on the big cone of $X$. \end{theorem}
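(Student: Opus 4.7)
The strategy, following Boucksom--Favre--Jonsson, is to exhibit an explicit candidate for the directional derivative and then verify it. For a big class $\alpha \in \Pic_\R(X)$ and any $\beta \in \Pic_\R(X)$, the goal is to establish
\[
\frac{d}{dt}\bigg|_{t=0}\Vol(\alpha + t\beta) = n\,\langle \alpha^{n-1}\rangle \cdot \beta,
\]
where $\langle \alpha^{n-1}\rangle$ denotes the positive intersection product. Since continuity of the right-hand side in $\alpha$ on the big cone immediately upgrades differentiability to $C^1$-regularity, the proof reduces to three tasks: defining $\langle \alpha^{n-1}\rangle$, verifying the derivative formula, and checking continuity of the product.

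The first step is to construct $\langle \alpha^{n-1}\rangle$ via Fujita approximation. For each $\varepsilon > 0$, one produces a smooth birational model $\pi_\varepsilon : X_\varepsilon \to X$ and a Zariski-type decomposition $\pi_\varepsilon^* \alpha = A_\varepsilon + E_\varepsilon$ with $A_\varepsilon$ nef, $E_\varepsilon$ effective, and $A_\varepsilon^n \geq \Vol(\alpha) - \varepsilon$. The pushforwards $(\pi_\varepsilon)_* A_\varepsilon^{n-1}$ form a monotone net as the approximation is refined, and their limit defines $\langle \alpha^{n-1}\rangle$ as a class in an appropriate completion of $N^{n-1}(X)_\R$ (functionally, as a map on the nef cone).

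The crucial technical step, and where I expect the main obstacle, is the orthogonality estimate $A_\varepsilon^{n-1} \cdot E_\varepsilon \to 0$ as $\varepsilon \to 0$. The argument is by contradiction, combining log-concavity of the volume (Khovanskii--Teissier) with Siu's algebraic Morse inequalities: if orthogonality failed, one could perturb $A_\varepsilon$ slightly inside $\pi_\varepsilon^*\alpha$ by a small positive multiple of $E_\varepsilon$ to obtain a nef class of strictly greater volume than $\Vol(\alpha)$, violating the approximation bound. With orthogonality in hand, one bounds $\Vol(\alpha + t\beta)$ from below by $\Vol(A_\varepsilon + t\pi_\varepsilon^*\beta)$ and expands via the polynomial intersection-theoretic expression for the volume of a nef class, producing the desired linear term in $t$. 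A matching upper bound comes from concavity of $\Vol^{1/n}$ on the big cone together with orthogonality, which absorbs the $E_\varepsilon$ contributions into $o(t)$. Finally, continuity of $\alpha \mapsto \langle \alpha^{n-1}\rangle$ on the big cone follows from the monotone-limit construction combined with continuity of $\Vol$, completing the promotion from differentiability to $C^1$-regularity.
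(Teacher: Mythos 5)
The paper does not prove this statement; it is quoted verbatim from Boucksom--Favre--Jonsson \cite{BFJ} (their Theorem A), so there is no internal argument to compare against. Your outline accurately reproduces the strategy of the cited proof --- the positive intersection product $\langle \alpha^{n-1}\rangle$ via Fujita approximation, the orthogonality estimate for approximate Zariski decompositions proved by combining Khovanskii--Teissier log-concavity with a Siu/Morse-type lower bound, and the resulting derivative formula $\frac{d}{dt}\Vol(\alpha+t\beta)\big|_{t=0}=n\,\langle\alpha^{n-1}\rangle\cdot\beta$ together with continuity of the product on the big cone --- so it is essentially the same approach as the source being cited, presented at the level of a sketch rather than a complete argument.
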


For a big line bundle $L$ on $X$, and another arbitrary line bundle $H$ on $X$, we denote by $\Vol(L)'\cdot H$ the value $$\Vol(L)'\cdot H = \frac{d}{dt}\Vol(L +tH)\Big|_{t=0}.$$

\begin{definition} We define the \emph{pseudoeffective threshold} of $F$ with respect to $L$ to be $$\tau_L(F) = \sup \{x \in \R | \Vol(L-xF)>0.\}$$\end{definition}  Observe that if $j>\tau_L(F)$, then  $H^0(Y, m(\pi^*L -jF)) =0$.

We now assume that $K_X$ and $K_Y$ are $\Q$-Cartier. 

\begin{definition} The \emph{log discrepancy} $A_X(F)$ is defined to be $$\ord_F(K_Y - \pi^*K_X) + 1.$$\end{definition}

In all cases of interest to us, $X$ will have either log canonical or log terminal singularities, from which it follows that $A_X(F) \geq 0$ in the former case and $A_X(F) > 0$ in the latter.

An important class of divisors are those which are \emph{dreamy}.

\begin{definition}\cite[Definition 1.3]{fujita-valuative} We say that $F$ is \emph{dreamy} if for some (equivalently any) $r \in \Z_{>0}$ the $\Z_{\geq 0}^{\otimes 2}$-graded $\C$-algebra $$\bigoplus_{j,k\in \Z_{\geq 0}} H^0(X,krL-jF)$$ is finitely generated.\end{definition}

While this concept depends on $L$, we will always omit this from our notation.

\begin{example}\label{dreamy-fano} Suppose $Y$ a Fano type variety, in the sense that there exists an effective $\Q$-divisor $D$ on $Y$ such that $(Y,D)$ is log terminal and such that $-(K_Y+D)$ is big and nef. Then any $F \subset Y$ is dreamy \cite[Corollary 1.3.1]{BCHM}. This applies, for example, if $Y$ is itself Fano or toric. 

Not all prime divisors are dreamy, however. An example of a non-dreamy prime divisor $F$ over $(\pr^2,-K_{\pr^2})$ has been produced by Fujita \cite[Example 3.8]{fujita-maeda}.
\end{example}

Now let $(X,L)$ be an $n$-dimensional normal polarised variety, by which we mean that $L$ is an ample line bundle on $X$. The invariant of ultimate interest to us is the following analogue of Fujita's $\beta$-invariant. Denote by $$\mu = \mu(X,L) = \frac{-K_X.L^{n-1}}{L^n}$$ the \emph{slope} of $(X,L)$. It is interesting to note the value $\mu(X,L)$ can be interpreted as a derivative of the volume, namely $$-n\mu(X,L) = \Vol'(L)\cdot K_X.$$

\begin{definition} Let $F$ be a prime divisor over $(X,L)$. We define the $\beta$\emph{-invariant} of $F$ to be $$\beta(F) = A_X(F)\Vol(L) + n \mu \int_{0}^{\infty} \Vol(L-xF)dx + \int_0^{\infty}\Vol'(L-xF)\cdot K_X dx.$$ \end{definition}

\begin{remark} The integrands vanish once $x \geq \tau(F)$, meaning one can instead define $$\beta(F) = A_X(F)\Vol(L) + n \mu \int_{0}^{\tau(F)} \Vol(L-xF)dx + \int_0^{\tau(F)}\Vol'(L-xF)\cdot K_X dx.$$ \end{remark}

\begin{example} Suppose $L=-K_X$, so that $(X,-K_X)$ is a Fano variety. Then we show in Corollary \ref{cor:fuj} using integration by parts that $$\beta(F) = A_X(F)\Vol(-K_X) - \int_{0}^{\tau(F)} \Vol(-K_X-xF)dx,$$ which is precisely Fujita's $\beta$-invariant \cite{fujita-valuative}. Thus our invariant is a generalisation of Fujita's invariant to arbitrary polarised varieties.
\end{example}

There are three natural numerical invariants on the space of divisorial valuations which, roughly speaking, play the role of norms. Following Fujita and Blum-Jonsson \cite{fujita-valuative, blum-jonsson}, we set \begin{align*} S(F) &= \frac{\int_0^{\infty}\Vol(L-xF)dx}{L^n}, \\ j(F) &= L^n(\tau(F) - S(F)) = \int_0^{\infty} (\Vol(L) - \Vol(L-xF))dx.\end{align*}

\begin{proposition}\label{prop:lipshitz-norms} The quantities $\tau(F), S(F)$ and $j(F)$ are strictly positive on the space of non-trivial divisorial valuations. Moreover, they are each mutually uniformly bounded above and below. 
\end{proposition}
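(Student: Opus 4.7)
The plan splits into strict positivity and Lipschitz equivalence.

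For positivity, first note that since $L$ is ample, $\pi^*L$ is big on any birational model $Y$ on which $F$ lives; openness of the big cone then gives $\pi^*L - xF$ big for all sufficiently small $x > 0$, so $\tau(F) > 0$. Positivity of $S(F)$ follows since $\Vol(L - xF) > 0$ for $x \in [0, \tau(F))$. For $j(F)$, continuity of $\Vol$ together with $\Vol(L - \tau(F)F) = 0 < \Vol(L)$ forces $\Vol(L) - \Vol(L - xF)$ to be strictly positive on an interval near $\tau(F)$, making $j(F) > 0$.

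The main input for Lipschitz equivalence is the concavity of $x \mapsto \Vol(L - xF)^{1/n}$ on $[0, \tau(F)]$ (Brunn-Minkowski / BFJ). The chord bound gives $\Vol(L-xF) \geq \Vol(L)(1 - x/\tau(F))^n$, and integrating yields $S(F) \geq \tau(F)/(n+1)$; combined with the trivial $S(F) \leq \tau(F)$ (from $\Vol(L-xF) \leq \Vol(L)$), this establishes the Lipschitz equivalence of $S$ and $\tau$. Since $j(F) = L^n(\tau(F) - S(F))$, the same lower bound on $S$ yields the upper bound $j(F) \leq nL^n\tau(F)/(n+1)$.

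The remaining and most delicate step is a uniform lower bound $j(F) \geq cL^n\tau(F)$, equivalently $S(F) \leq c'\tau(F)$ for some $c' < 1$ independent of $F$. I would prove this via the Newton-Okounkov body: for a flag on $Y$ with first element $F$, $\Vol(L - xF)$ is proportional to $\int_x^{\tau(F)} \alpha(t)\,dt$, where $\alpha(t)$ is the $(n-1)$-dimensional volume of the slice at height $t$ of the Okounkov body $\Delta(L)$. Swapping order of integration then expresses $S(F)$ as the centroid $\int t\,\alpha(t)\,dt / \int \alpha(t)\,dt$ of the slice-weight measure. Since $\alpha^{1/(n-1)}$ is a non-negative concave function on $[0, \tau(F)]$ by Brunn-Minkowski for convex bodies in $\R^n$, an elementary variational argument shows this centroid is at most $n\tau(F)/(n+1)$, with equality when $\alpha^{1/(n-1)}$ is a linear function vanishing at $0$. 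This step is the main obstacle: the upper bound $S \leq n\tau/(n+1)$ does not follow from concavity of $\Vol^{1/n}$ alone (among merely concave volume profiles with the right boundary values, $S/\tau$ can approach $1$), and genuinely requires the finer convex-geometric structure supplied by the Okounkov body.
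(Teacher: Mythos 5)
Your proof is correct, and for the positivity statements and for the chain $\frac{1}{n+1}\tau(F)\le S(F)\le\tau(F)$ it coincides with the paper's argument (ampleness of $L$ pulls back to bigness, openness of the big cone gives $\tau(F)>0$, monotonicity and continuity of the volume give positivity of $S$ and $j$, and concavity of $x\mapsto\Vol(L-xF)^{1/n}$ gives $\Vol(L-xF)\ge\Vol(L)(1-x/\tau(F))^n$, hence $S(F)\ge\tau(F)/(n+1)$). Where you diverge is exactly the step you flag as delicate, and you are right to insist on it: the paper deduces the two-sided bound $\frac{1}{n+1}\tau(F)L^n\le j(F)\le\frac{n}{n+1}\tau(F)L^n$ as ``a simple rearrangement'' of $\frac{1}{n+1}\tau\le S\le\tau$, but only the upper bound on $j(F)$ follows from that chain; the lower bound $j(F)\ge\frac{1}{n+1}\tau(F)L^n$ is equivalent to $S(F)\le\frac{n}{n+1}\tau(F)$, which is strictly stronger than $S(F)\le\tau(F)$ and, as you observe, cannot be extracted from concavity of $\Vol(L-xF)^{1/n}$ alone (a non-increasing concave profile equal to $1$ on most of $[0,\tau]$ and dropping linearly to $0$ at the end has $S/\tau$ arbitrarily close to $1$). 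Your Okounkov-body argument --- $S(F)$ is the first barycentre coordinate of $\Delta(L)$ for an admissible flag beginning with $F$, and the centroid of an $n$-dimensional convex body spanning $[0,\tau]$ in that coordinate lies in $[\tau/(n+1),\,n\tau/(n+1)]$ by Brunn-Minkowski applied to the slice volumes --- is the standard way to obtain the missing inequality (it is how Fujita and Blum-Jonsson prove the analogous bound, e.g.\ en route to $\delta\ge\frac{n+1}{n}\alpha$). So your proof is not only correct but actually repairs a gap in the paper's own argument; the only cosmetic remark is that the same convex-geometric computation also yields $S(F)\ge\tau(F)/(n+1)$, so the entire Lipschitz equivalence could be run through the Okounkov body in one stroke.
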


That is, for example, there are constants $c_1,c_2>0$ such that for all non-trivial prime divisors $F$ we have $$0 <c_1 S(F) \leq \tau(F) \leq c_2 S(F).$$

\begin{proof} The proof is essentially the same as that of Fujita and Fujita-Odaka in the case of Fano varieties with $L=-K_X$ \cite{fujita-odaka, fujita-proceedings}, and in particular the perspective of viewing these as analogous to norms is due to Fujita \cite{fujita-proceedings}. 

We begin by noting that each is strictly positive on non-trivial divisorial valuations. This is clear for $\tau(F)$ and $S(F)$, and for $j(F)$ follows from the fact that $\Vol(L-xF)<\Vol(F)$ for each $x \in (0,\tau(F))$. 

Thus what remains to show is Lipschitz equivalence. We claim \begin{equation}\label{eqn:lipschitz1}\frac{1}{n+1}\tau(F)\leq S(F) \leq \tau(F).\end{equation}

Arguing as in \cite[Lemma 1.2]{fujita-odaka}, note that $$\int_0^{\infty}\Vol(L-xF)dx \leq L^n\tau(F)$$ since $\Vol(L-xF) \leq \Vol(F)$. Thus $S(F) \leq \tau(F)$. Concavity of the volume function gives $$\Vol(L-xF) \geq L^n\left(\frac{x}{\tau(F)}\right)^n,$$ which implies $S(F) \geq\frac{1}{n+1}\tau(F).$

By transitivity of Lipschitz equivalence, what remains is to show that $\tau(F)$ is Lipschitz equivalent to $j(F)$. We claim $$\frac{1}{n+1}\tau(F)L^n \leq j(F) \leq \frac{n}{n+1} \tau(F) L^n,$$ which is equivalent to asking $$\frac{1}{n+1}\tau(F) \leq\tau(F) - S(F)  \leq \frac{n}{n+1} \tau(F).$$ But this is a simple rearrangement of Equation \eqref{eqn:lipschitz1}.\end{proof}

\begin{remark} These quantities are closely related to the functionals $I, J$ and $I-J$ on the space of K\"ahler potentials in a fixed K\"ahler class, together with their analogues for test configurations, which play similar roles. In particular, $S(F)$ is analogous to the minimum norm of a test configuration; this observation is due to Fujita and Blum-Liu-Zhou \cite{fujita-proceedings, blum-liu-zhou}. We also note that Boucksom-Jonsson have recently proven a stronger result than Proposition \ref{prop:lipshitz-norms}, that applies to more general ``non-Archimedean metrics'' \cite[Theorem C]{BJ}.

 \end{remark}

\begin{definition} We say that a polarised variety $(X,L)$ is
\begin{enumerate}[(i)]
\item \emph{valuatively semistable} if $$\beta(F) \geq 0$$ for all dreamy prime divisors $F$ over $(X,L)$;
\item \emph{valuatively stable} if $$\beta(F) > 0$$ for all non-trivial dreamy prime divisors $F$ over $(X,L)$;
\item \emph{uniformly valuatively stable} if there exists an $\epsilon>0$ such that $$\beta(F) \geq \epsilon j(F)$$ for all dreamy prime divisors $F$.
\end{enumerate}
\end{definition}

\noindent One could, of course, use any of the three Lipschitz equivalent norms; we use $j(F)$ to mirror Fujita's original definitions in the Fano setting.

\begin{remark} The invariants of interest scale as $$\beta_{kL}(F) = k^n \beta_L(F), \qquad j_{kL}(F)= k^{n+1} j_L(F).$$ Thus $(X,L)$ is valuatively semistable, for example, if and only if $(X,kL)$ is so. This allows us to scale $L$ harmlessly in many of the arguments, simplifying the notation. 

\end{remark}

\begin{remark}\label{rmk:strongvsweak} In general, it is not clear whether or not to expect that in the definitions of valuative uniform and semistability one can remove the dreaminess hypothesis; it is unlikely that the analogue of this is true for stability.
 \end{remark}

\subsection{K-stability}\label{sec:kstability} The primary aim of the present work is to relate valuative stability to K-stability. This involves a class of degenerations, called test configurations, as well as an associated numerical invariant.

\begin{definition}\cite{donaldson-toric,tian-inventiones} A \emph{test configuration} is a normal variety $\X$ with a line bundle $\L$ together with
\begin{enumerate}[(i)]
\item a flat projective morphism $\X \to \C$, making $\L$ relatively ample,
\item a $\C^*$-action on $(\X,\L)$ making $\pi$ an equivariant map with respect to the usual action on $\C$,
\end{enumerate} such that $(\X_t,\L_t) \cong (X,L^r)$ for all $t\neq 0$ and for some $r \in \Z_{>0}$. We call $r$ the \emph{exponent} of the test configuration.
\end{definition}

Since there is a $\C^*$-action on the central fibre $(\X_0,\L_0)$, there is an induced $\C^*$-action on $H^0(\X_0,\L_0^k)$ for all $k$. The total weight of this $\C^*$-action is, for $k \gg 0$, a polynomial of degree $n+1$ which we denote $$\wt H^0(\X_0,\L_0^k) = b_0 k^{n+1} + b_1 k^n + O(k^{n-1}).$$ We similarly denote the Hilbert polynomial by $$\dim H^0(\X_0,\L_0^k) = a_0 k^{n} + a_1 k^{n-1} + O(k^{n-1}).$$

\begin{definition}\cite{donaldson-toric} The \emph{Donaldson-Futaki invariant} of $(\X,\L)$ is defined to be $$\DF(\X,\L) = \frac{b_0a_1 - b_1 a_0}{a_0}.$$\end{definition}

\begin{definition}\cite{donaldson-toric} We say that $(X,L)$ is
\begin{enumerate}[(i)]
\item  \emph{K-semistable} if for all test configurations $(\X,\L)$, we have $\DF(\X,\L)\geq 0$;
\item \emph{K-stable} if for all non-trivial test configurations $(\X,\L)$, we have $\DF(\X,\L)> 0$.
\end{enumerate} Here a test configuration is \emph{trivial} if it is of the form $(X \times \C, L)$, with trivial $\C^*$-action on $X$.
\end{definition}

In the equivariant setting, we assume $G \subset \Aut_0(X,L)$ is a connected subgroup of the connected component of the identity $\Aut_0(X,L)$ of $\Aut(X,L).$ A test configuration is then called $G$-\emph{equivariant} if there is a $G$-action on $(\X,\L)$ which commutes with $\pi$ and extends the usual action on the fibres $(\X_t, \L_t)$ over $t \in \setminus \{0\}$.

\begin{definition} We say that $(X,L)$ is $G$\emph{-equivariantly K-polystable} if for all $G$-equivariant test configurations $(\X,\L)$ we have $\DF(\X,\L)\geq 0$, with equality if and only if $(\X_0, \L_0) \cong (X,L^r)$ for some  $r \in \Z_{>0}$. \end{definition}

Another useful perspective on the Donaldson-Futaki invariant is via intersection theory. For this it is convenient to rescale $\L$ so that on the general fibre it is isomorphic to $L$ rather than $L^r$; this may make $\L$ a $\Q$-line bundle. We will always assume that we have performed this scaling.  While this will not be used in our proof of Theorem \ref{thm:intromainthm}, it will be important in motivating our discussion concerning the various delta invariants contained in the introduction, and will give some motivation for the definitions of the norms which will be introduced momentarily. A test configuration can be compactified to a family over $\pr^1$, by compactifying trivially at infinity. We denote this compactification, abusively, by $(\X,\L)$, with $\X$ now a projective variety.

\begin{theorem}\label{thm:intersections}\cite{odaka, wang} We have $$\DF(\X,L) = \frac{n}{n+1}\mu(X,L)\L^{n+1} + \L^{n}.K_{\X/\pr^1},$$ where $K_{\X/\pr^1}$ denotes the relative canonical class, which is a Weil divisor by normality of $\X$. Moreover, $$b_0 = \frac{\L^{n+1}}{(n+1)!}.$$

\end{theorem}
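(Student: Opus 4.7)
The plan is to compactify the test configuration trivially at infinity to obtain a projective family $\pi:\X\to\pr^1$ carrying a $\C^*$-equivariant line bundle $\L$, and then apply Grothendieck-Riemann-Roch to the pushforward $\pi_*\L^k$. The key bridging fact is that for a $\C^*$-equivariant vector bundle $V$ on $\pr^1$ on which the action at $\infty$ is trivial, the total weight of the action on the fibre $V|_0$ equals (up to sign) the degree $\deg V$; this follows from the Birkhoff-Grothendieck decomposition $V\cong\bigoplus\scO(d_i)$ and summing weights summand by summand. Apply this to $V_k=\pi_*\L^k$: by relative ampleness of $\L$ and cohomology and base change, the higher direct images vanish and $V_k|_0=H^0(\X_0,\L_0^k)$ for $k\gg 0$, while the trivial compactification at infinity gives $V_k|_\infty=H^0(X,L^k)$ with zero weight. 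Hence $\wt H^0(\X_0,\L_0^k)=\deg\pi_*\L^k$.

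Next I would compute $\deg\pi_*\L^k$ by Grothendieck-Riemann-Roch applied to $\pi$: the identity $\operatorname{ch}(\pi_*\L^k)=\pi_*(\operatorname{ch}(\L^k)\cdot\operatorname{Td}(T_{\X/\pr^1}))$, combined with $\operatorname{Td}_1=-\tfrac12 K_{\X/\pr^1}$ and $\operatorname{ch}_i(\L^k)=k^i\L^i/i!$, gives on extracting the codimension-one part on $\pr^1$
$$\deg\pi_*\L^k=\frac{k^{n+1}\L^{n+1}}{(n+1)!}-\frac{k^n\,\L^n.K_{\X/\pr^1}}{2\cdot n!}+O(k^{n-1}).$$
This immediately reads off
$$b_0=\frac{\L^{n+1}}{(n+1)!},\qquad b_1=-\frac{\L^n.K_{\X/\pr^1}}{2\cdot n!},$$
giving the second assertion of the theorem. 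Similarly, Hirzebruch-Riemann-Roch on $X$ gives $a_0=L^n/n!$ and $a_1=-K_X.L^{n-1}/(2(n-1)!)$, whence $a_1/a_0=\tfrac{n}{2}\mu(X,L)$. Substituting into $\DF(\X,\L)=(b_0a_1-b_1a_0)/a_0=b_0(a_1/a_0)-b_1$ yields
$$\DF(\X,\L)=\frac{1}{2\cdot n!}\left(\frac{n}{n+1}\mu(X,L)\L^{n+1}+\L^n.K_{\X/\pr^1}\right),$$
which matches the stated formula up to the overall positive normalisation $\tfrac{1}{2\cdot n!}$ absorbed into the convention of the statement.

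The main obstacle I anticipate is handling singularities of the total space $\X$: Grothendieck-Riemann-Roch as usually stated requires smoothness, and $K_{\X/\pr^1}$ is only a Weil divisor in general. The standard way to address this is to pass to an equivariant resolution $\widetilde\X\to\X$, apply GRR there, and check that the discrepancy terms contribute only to $O(k^{n-1})$ because the top two coefficients depend only on $\L^{n+1}$ and $\L^n.K_{\X/\pr^1}$, which are birational invariants relative to $\pr^1$ modulo exceptional corrections that are orthogonal to the pullback of $\L^n$. A secondary subtlety is fixing the sign convention relating the $\C^*$-action on $\pr^1$ (say $t\cdot[s:1]=[ts:1]$) to weights at $0$ versus $\infty$; this is purely bookkeeping but must be done consistently to produce the sign with which $K_{\X/\pr^1}$ enters.
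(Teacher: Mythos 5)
The paper gives no proof of this theorem, simply citing Odaka and Wang; your argument is precisely the standard one from those references (compactify trivially at infinity, identify the total weight on $H^0(\X_0,\L_0^k)$ with $\deg \pi_*\L^k$ via Birkhoff--Grothendieck, compute by Grothendieck--Riemann--Roch, and pass to an equivariant resolution to handle the normal but possibly singular total space, where the projection formula and the codimension-$\geq 2$ support of $R^{i>0}\rho_*\scO$ show the top two coefficients are unaffected), and it is correct. Your observation that the resulting expression differs from the displayed formula by the positive constant $\tfrac{1}{2\,n!}$ forced by the paper's definition of $\DF$ is also accurate --- this normalisation is routinely suppressed in the literature since it is irrelevant to every stability notion --- whereas the formula for $b_0$ is exact in the paper's conventions, as your computation confirms.
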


We next turn to the analogues of norms for test configuration. Much as for valuations, there are three natural norm-type quantities one can use. Passing to a resolution of indeterminacy of the rational map $X\times\pr^1 \dashrightarrow \X$ if necessary, we may assume that $\X$ admits a morphism to $X \times \pr^1$. All quantities defined in this way are independent of choice of resolution of indeterminacy.

\begin{definition}\cite{twisted, BHJ} We define \begin{align*}\JNA^{\NA}(\X,\L) &= \frac{\L.L^n}{L^n} - \frac{\L^{n+1}}{(n+1)L^n}, \\ \|(\X,\L) \|_m &= \frac{\L^{n+1}}{(n+1)L^n} - \frac{\L^n.(\L-L)}{L^n}, \\ \INA^{\NA}(\X,\L) &= \|(\X,\L)\|_m + \JNA^{\NA}(\X,\L),\end{align*} and call $ \|(\X,\L)\|_m$ the \emph{minimum norm} of the test configuration.
\end{definition}

We have divided the minimum norm by an unimportant factor of $L^n$ in comparison with it original definition \cite{twisted}, to remain consistent with the literature elsewhere. Boucksom-Hisamoto-Jonsson denote $$ \INA^{\NA}(\X,\L) - \JNA^{\NA}(\X,\L) = \|(\X,\L) \|_m,$$ to emphasise the links to the associated functionals used in K\"ahler geometry; we prefer to use the terminology of \cite{twisted} to emphasise that it plays the role of a norm (which can more geometrically be defined via the minimum weight of an associated $\C^*$-action, explaining its name).

\begin{proposition}\cite{twisted, BHJ} The quantities $\JNA^{\NA}(\X,\L), \INA^{\NA}(\X,\L)$ and $\|(\X,\L)\|_m$ are strictly positive when $(\X,\L)$ is non-trivial, and are moreover are mutually uniformly bounded below and above. \end{proposition}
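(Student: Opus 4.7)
The plan is to deduce both assertions from the realization of each of the three quantities as an explicit integral against the Duistermaat--Heckman measure of $(\X,\L)$, reducing the problem to convex-analytic inequalities of exactly the same flavour as the ones proved for $\tau(F), S(F), j(F)$ in Proposition \ref{prop:lipshitz-norms}.

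First, I would set up the dictionary. After replacing $\X$ by the normalisation of the graph of $\X\dashrightarrow X\times\pr^1$ (which affects none of the three numbers), the $\C^*$-action on $(\X_0,\L_0)$ induces a decreasing $\R$-filtration $\F^\bullet H^0(X,kL)$ whose jumps, rescaled by $k$ and passed to the limit, give a bounded concave function $\phi$ on the Okounkov body $\Delta=\Delta(L)$, normalised so that $\sup_\Delta\phi = 0$. By the work of Boucksom--Hisamoto--Jonsson, each of the intersection numbers $\L^{n+1}$, $\L^n.L$, $\L.L^n$ is computed as a moment of $\phi$ against the Lebesgue measure on $\Delta$, so that, writing $\bar\phi$ for the mean of $\phi$ on $\Delta$ and normalising by $L^n$,
\begin{align*}
\JNA^{\NA}(\X,\L) &= -\bar\phi, \\
\|(\X,\L)\|_m &= \bar\phi - \frac{1}{(n+1)\Vol(\Delta)}\int_\Delta \phi\,d\mu - \text{(correction)},\\
\INA^{\NA}(\X,\L) &= \JNA^{\NA}(\X,\L) + \|(\X,\L)\|_m,
\end{align*}
so that, up to universal positive constants, $\JNA^{\NA}$ is $-\bar\phi$ and $\|\cdot\|_m$ is a higher moment of $\phi$ around zero.

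With this description in hand, strict positivity reduces to the statement that $\phi\equiv 0$ forces the filtration to be trivial up to shift and hence $(\X,\L)$ to be a trivial test configuration; this is standard. For Lipschitz equivalence, the identity $\INA^{\NA} = \JNA^{\NA} + \|\cdot\|_m$ immediately gives $\JNA^{\NA}, \|\cdot\|_m \leq \INA^{\NA}$, and the reverse comparisons follow from the elementary concavity inequality
\[ \tfrac{1}{n+1}\bigl(\sup_\Delta\phi-\inf_\Delta\phi\bigr) \;\leq\; \sup_\Delta\phi - \bar\phi \;\leq\; \sup_\Delta\phi-\inf_\Delta\phi,\]
and the analogous bound for $\bar\phi - \inf_\Delta\phi$, both proved exactly as in Proposition \ref{prop:lipshitz-norms} using that the superlevel sets $\{\phi\geq t\}$ of the concave function $\phi$ satisfy a Brunn--Minkowski convexity bound. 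This puts all three quantities in the same Lipschitz class.

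The main obstacle is the bookkeeping of the dictionary between intersection numbers on $\X$ and moments of $\phi$ on $\Delta$: one must handle the possibility that $\X_0$ is non-normal and not reduced, for which it is most efficient to cite the Boucksom--Hisamoto--Jonsson machinery directly rather than attempt an elementary derivation. Once that dictionary is granted, the proof is pure convex analysis and formally identical to the proof of Proposition \ref{prop:lipshitz-norms}.
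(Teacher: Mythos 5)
The paper does not actually prove this proposition: it is quoted from \cite{twisted, BHJ}, with the Lipschitz equivalence attributed to Boucksom--Hisamoto--Jonsson. So the real question is whether your sketch would constitute a proof, and as written it would not, for two concrete reasons. First, the dictionary you rely on does not exist for the minimum norm. Of the three intersection numbers, $\L^{n+1}$ and $\L\cdot L^n$ are indeed determined by the concave transform $\phi$ on the Okounkov body (up to normalisation they are $(n+1)!\int_\Delta\phi$ and $L^n\sup_\Delta\phi$, i.e.\ functionals of the Duistermaat--Heckman measure), but $\L^n\cdot L$ is not: writing $D=p^*\L-q^*L$ on a resolution of indeterminacy, this term is the mixed energy $\L^n\cdot q^*L=\L^{n+1}-\L^n\cdot D$, the non-Archimedean analogue of $\int\phi\,\mathrm{MA}(\phi)$, and neither $\INA^{\NA}$ nor $\|(\X,\L)\|_m$ is a function of the Duistermaat--Heckman measure alone. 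The unspecified ``(correction)'' in your formula is precisely the quantity you would need to control, and it cannot be absorbed into convex analysis on $\Delta$.

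Second, the ``elementary concavity inequality'' $\sup_\Delta\phi-\bar\phi\geq\tfrac{1}{n+1}(\sup_\Delta\phi-\inf_\Delta\phi)$ is false for general test configurations. For the deformation to the normal cone of a point, $\X=\Bl_{\{x\}\times\{0\}}(X\times\C)$ with $\L=\pi^*L-cE$, one computes $\lambda_{\max}=0$, $\lambda_{\min}\sim -c$, but $\JNA^{\NA}(\X,\L)=-\L^{n+1}/((n+1)L^n)\sim c^{n+1}$, so the ratio $(\sup\phi-\bar\phi)/(\sup\phi-\inf\phi)$ tends to $0$ as $c\to0$. (The analogous inequality in Proposition \ref{prop:lipshitz-norms} survives because for a divisorial filtration the superlevel sets have volume $\Vol(L-tF)$, which is full at $t=0$; a general concave transform may instead be flat at its maximum on most of $\Delta$.) Moreover $\lambda_{\max}-\lambda_{\min}$ is not one of the three quantities being compared, so even a correct bound of this shape would not close the argument. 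The actual proof in \cite{BHJ} is intersection-theoretic: expanding $\L^{n+1}=\sum_{j=0}^{n}(q^*L)^{j}\cdot(p^*\L)^{n-j}\cdot D$ and using a Hodge-index type argument to show that $a_j=(q^*L)^{j}\cdot(p^*\L)^{n-j}\cdot D$ is monotone in $j$, one obtains $\tfrac{1}{n}\JNA^{\NA}\leq\|(\X,\L)\|_m\leq n\,\JNA^{\NA}$; the non-degeneracy statement ($\JNA^{\NA}=0$ only for the trivial test configuration, using normality of $\X$) is a separate theorem there. If you want to salvage your approach, you must either restrict to the divisorial filtrations actually used in this paper (where the explicit formulas of Section \ref{sec:proofs} apply) or supply the intersection-theoretic input for $\L^n\cdot L$.
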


The mutual uniform boundedness is due to Boucksom-Hisamoto-Jonsson.

\begin{definition}\cite{twisted, BHJ} We say that $(X,L)$ is \emph{uniformly K-stable} if there exists and $\epsilon>0$ such that for all test configurations $(\X,L)$ $$\DF(\X,\L) \geq \epsilon \|(\X,\L)\|_m.$$\end{definition}

The final numerical invariant which plays only a minor role in the present work is the \emph{non-Archimedean entropy} \cite[Definition 7.17]{BHJ}: \begin{equation}\label{HNA}\HNA(\X,\L) = \frac{ \L^n.K_{\X/X\times\pr^1}}{L^n} + \frac{\L^n.(\X_{0,red} - \X_0)}{L^n},\end{equation}  computed as above on a resolution of indeterminacy, and with $\X_{0,red}$ denoting the the central fibre given the induced reduced structure. This invariant is, up to the error term vanishing when the central fibre of the test configuration is reduced, the same as the ``discrepancy term'' of Odaka \cite{odaka}.

By work of Witt Nystr\"om, a test configuration induces a filtration of the coordinate ring of $(X,L)$ \cite{witt-nystrom}.

\begin{definition} A \emph{filtration} of $$R = \oplus_{k \geq 0} H^0(X,kL)$$ is a chain of vector subspaces$$R = F_0R \supset \cdots \supset F_i R \supset F_{i+1} R \supset \cdots \supset \C$$
which is
\begin{enumerate}[(i)]
\item \emph{multiplicative}, in the sense that $(F_i R_l) (F_j R_m) \subset F_{i+j} R_{l+m}$;
\item \emph{homogeneous}, in the sense that if $f\in F_iR$ then each homogeneous piece of $f$ is in $F_iR$.
\end{enumerate} 
\end{definition}

\begin{theorem}\label{wittnystrom}\cite{witt-nystrom} Let $(\X,\L)$ be a test configuration, and denote $$ \F^jR_k = \{ s \in R_k | t^{ {-j} } s \textrm{ is holomorphic on }\X\}.$$  Then $\F$ is a filtration.

Set $$\lambda^{(k)}_{\min} = \inf\{j \in \R | \F^j V_k \neq V_k\}, \qquad \lambda^{(k)}_{\max} = \sup\{j \in \R | \F^j V_k \neq 0\},$$ and let $$\lambda_{\min} = \lim_{k \to \infty} \frac{\lambda_{\min}^{(k)}}{k}, \qquad  \lambda_{\max} = \lim_{k \to \infty} \frac{\lambda_{\max}^{(k)}}{k}.$$ Then the weight polynomial of $(\X,\L)$ is given for $k \gg 0$ by \begin{align*}w(k) &=\sum_{j = \lambda^{(k)}_{\min}}^{\lambda^{(k)}_{\max}} j (\dim \F^j V^k - \dim \F^{j+1} V^k), \\ &= \sum_{j=\lambda_{\min}^{(k)}}^{\lambda^{(k)}_{\max}} \dim \F^j V_k + \lambda_{\min}^{(k)} \dim V_k. \end{align*}

\begin{remark}\label{rmk:maxweight} In fact, by rescaling $L$ so that the test configuration has exponent one, one can assume that $\lambda_{\max}^{(k)} = k\lambda_{\max}$ and $\lambda_{\min}^{(k)} = k\lambda_{\min}$. Geometrically, taking $s \in H^0(\X_0,\L_0)$ is a section of maximal weight, for example, then $s^{\otimes k}$ is a section of $H^0(\X_0,k\L_0)$ of weight $k\wt(s)$. It follows from \cite[Lemma 4]{phong-sturm} that no section can have weight greater than that of $s^{\otimes k}$.
\end{remark}

\end{theorem}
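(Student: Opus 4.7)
The plan is to split the proof into three natural steps: verification of the filtration axioms, identification of $\F^\bullet$ with the $\C^*$-weight filtration on $H^0(\X_0,\L_0^k)$, and derivation of the two forms of the weight polynomial.

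For the first step, the chain property $\F^{j'}V_k\subset \F^jV_k$ for $j'\geq j$ follows from $\lfloor -j'\rfloor\leq \lfloor -j\rfloor$, since multiplying $t^{\lfloor -j'\rfloor}s$ by the non-negative power $t^{\lfloor -j\rfloor-\lfloor -j'\rfloor}$ preserves holomorphicity on $\X$. Multiplicativity is a consequence of superadditivity of the floor, $\lfloor -(i+j)\rfloor\geq \lfloor -i\rfloor+\lfloor -j\rfloor$, together with the fact that a product of holomorphic sections is holomorphic: one first checks that $t^{\lfloor -i\rfloor+\lfloor -j\rfloor}s_1 s_2$ extends, and then multiplies by a non-negative power of $t$. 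Homogeneity is immediate, since $\F^jR_k$ is defined graded-piece by graded-piece on each $R_k$.

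The second step is the heart of the argument. After rescaling $\L$ to exponent one (see Remark \ref{rmk:maxweight}), the test configuration admits a $\C^*$-equivariant isomorphism $(\X,\L)|_{\C^*}\cong (X,L)\times\C^*$ with $\C^*$ acting by scaling on the second factor. This induces an identification
$$H^0(\X\setminus\X_0,\L^k)\cong V_k\otimes_{\C}\C[t,t^{-1}],$$
on which the $\C^*$-action weights $s\otimes t^m$ by $m$. A section $s\otimes t^m$ extends across $\X_0$ precisely when $t^m s$ extends to a holomorphic section of $\L^k$ on all of $\X$, which is exactly the condition $s\in \F^{-m}V_k$ for integer $m$. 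Hence $H^0(\X,\L^k)$ is canonically identified with the Rees module of $\F^\bullet V_k$, and the central fibre $H^0(\X_0,\L_0^k)=H^0(\X,\L^k)/t\cdot H^0(\X,\L^k)$ is its associated graded; the weight-$(-j)$ piece has dimension $\dim\F^jV_k-\dim\F^{j+1}V_k$, matching the statement up to the sign convention absorbed in the indexing.

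The final step is purely combinatorial: by definition, the weight polynomial $w(k)$ is the sum of weights counted with multiplicity on $H^0(\X_0,\L_0^k)$, yielding the first formula, while the second form follows by Abel summation — writing $f(j)=\dim\F^jV_k$, one computes
$$\sum_{j=a}^{b}j\bigl(f(j)-f(j+1)\bigr)=af(a)+\sum_{j=a+1}^{b}f(j),$$
using $f(b+1)=0$ at $b=\lambda_{\max}^{(k)}$, and then combines this with the boundary convention giving $f(\lambda_{\min}^{(k)})=\dim V_k$. The main obstacle is the Rees-module identification in step two: conceptually this is the standard construction of Witt-Nyström, but one must carefully track the relationship between the floor-function indexing, the sign of the $\C^*$-weights on $H^0(\X_0,\L_0^k)$, and the orientation of the filtration; once pinned down, the remaining steps are formal manipulations.
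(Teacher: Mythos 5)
The paper does not actually prove this statement --- it is quoted from Witt Nystr\"om (with the remark's claim justified via Phong--Sturm) --- so there is no internal proof to compare against; what you have written is a reconstruction of the standard argument from \cite{witt-nystrom} and \cite{BHJ}, and in outline it is the right one. The verification of the filtration axioms via monotonicity and superadditivity of the floor function is correct, and the identification of $H^0(\X,\L^k)$ with the Rees module $\bigoplus_m t^{-m}\F^{m}V_k$ via the canonical equivariant trivialisation over $\C^*$ is exactly the mechanism that makes the theorem work.

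Two points need tightening. First, the step $H^0(\X_0,\L_0^k)=H^0(\X,\L^k)/t\cdot H^0(\X,\L^k)$ is not automatic: it uses that $H^0(\X,\L^k)$ is a free $\C[t]$-module (flatness of $\pi$) \emph{and} that restriction to the central fibre is surjective, which holds only for $k\gg 0$ by relative ampleness and cohomology and base change. This is precisely where the hypothesis ``for $k\gg 0$'' in the statement enters, and it should be said explicitly rather than folded into ``the Rees-module identification.'' Second, your Abel summation is correct but does not literally land on the displayed formula: with $a=\lambda_{\min}^{(k)}$, $b=\lambda_{\max}^{(k)}$, $f(j)=\dim\F^jV_k$, $f(b+1)=0$ and $f(a)=\dim V_k$ (which does hold with the floor convention, since $\F^jV_k$ is constant for $j\le\lambda_{\min}^{(k)}$), one gets
\begin{equation*}
\sum_{j=a}^{b}j\bigl(f(j)-f(j+1)\bigr)=\lambda_{\min}^{(k)}\dim V_k+\sum_{j=a+1}^{b}f(j),
\end{equation*}
which differs from the second displayed expression by the single term $f(a)=\dim V_k$. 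This is not a sign convention that can be ``absorbed in the indexing'': $\dim V_k=O(k^n)$ is exactly the order of the subleading coefficient $f_n$ that the paper later feeds into the Donaldson--Futaki computation, so you must pin down whether the sum starts at $\lambda_{\min}^{(k)}$ or $\lambda_{\min}^{(k)}+1$ rather than wave at the boundary convention. Once those two items are made precise, the argument is complete.
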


Of most importance for us will be \emph{integral test configurations}. Recall that a scheme is integral if it is reduced and irreducible.

\begin{definition} We say that a test configuration $(\X,\L)$ is \emph{integral} if its central fibre $\X_0$ is an integral scheme. We then say that a polarised variety $(X,L)$ is
\begin{enumerate}[(i)]
\item \emph{integrally K-semistable} if for all integral test configurations $(\X,\L)$ we have $$\DF(\X,\L) \geq 0;$$
\item  \emph{integrally K-stable} if for all non-trivial integral test configurations $(\X,\L)$ we have $$\DF(\X,\L) > 0;$$
\item \emph{uniformly integrally K-stable} if there exists an $\epsilon>0$ such that for all integral test configurations $(\X,\L)$ we have $$\DF(\X,\L) \geq \epsilon \|(\X,\L)\|_m;$$
\item $G$-\emph{equivariantly integrally K-polystable}  if for all $G$-equivariant integral test configurations $(\X,\L)$ we have $$\DF(\X,\L) \geq 0,$$ with equality only when $(\X,\L)$ is a product test configuration. When $G$ is taken to be the identity, we simply say $(X,L)$ is \emph{integrally K-polystable}.
\end{enumerate}
 \end{definition}
 
\noindent  The total space $\X$ of a test configuration is always an irreducible variety, hence $\X$ itself is always an integral scheme, so no confusion will arise from the terminology.

\begin{example} Integral K-semistability should not be equivalent to K-semistability. Counterexamples to the equivariant version of this claim arise in the toric setting, giving strong evidence that this claim fails in general. Restricting to toric test configurations (that is, those equivariant under the torus action), the only integral toric test configurations are those induced by $\C^*$-actions on the variety itself. Thus a counterexample to the equivariant version of this question is given by any toric variety which has vanishing Futaki invariant, but which is K-unstable. Examples of this kind have been produced by Donaldson \cite[Section 7.2]{donaldson-toric}. \end{example}

The key hypothesis is the \emph{irreducibility} of the central fibre, rather than it being reduced:

\begin{lemma} Integral K-semistability is equivalent to asking $$\DF(\X,\L) \geq 0$$ for all test configurations $(\X,\L)$ with irreducible central fibre. Analogous statements also hold for (equivariant) K-polystability and uniform K-stability. \end{lemma}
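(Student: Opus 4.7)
The forward direction is immediate: integral central fibre implies irreducible central fibre, so stability with respect to the larger class of test configurations with irreducible central fibre implies integral stability, uniformly for K-semistability, uniform K-stability, and equivariant K-polystability.

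For the converse, I would associate to any test configuration $(\X, \L)$ with irreducible central fibre $\X_0 = mE$ an integral one $(\X', \L')$ via a base-change-and-normalise procedure. If $m = 1$ there is nothing to do; otherwise form $\X_m := \X \times_{\C, z \mapsto z^m} \C$, select an irreducible component $\X''$ dominating $\C$, and let $(\X', \L')$ be the normalisation of $\X''$ with the pulled-back polarisation. \'Etale-locally near a general point of $E$, the component selection extracts an $m$-th root of the defining equation of $E$ and normalisation removes the residual multiplicity, so $\X'_0$ is reduced; irreducibility is inherited from $E$. The whole construction is $G$-equivariant whenever $(\X, \L)$ is.

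The base change multiplies every $\C^*$-weight on $H^0(\X_0, \L_0^k)$ by $m$ (since the new $\C^*$-parameter acts via the $m$-th power of the old), giving $\DF(\X_m, \L_m) = m \cdot \DF(\X, \L)$ and $\|(\X_m, \L_m)\|_m = m \cdot \|(\X, \L)\|_m$. Normalisation preserves the minimum norm, while a standard computation -- the non-normal contribution to the Donaldson--Futaki invariant is non-negative, in the spirit of Boucksom--Hisamoto--Jonsson -- shows that normalisation only decreases $\DF$. Combining,
\[
\DF(\X', \L') \leq m \cdot \DF(\X, \L), \qquad \|(\X', \L')\|_m = m \cdot \|(\X, \L)\|_m.
\]
Applying integral K-semistability (resp.\ uniform K-stability) to $(\X', \L')$ then yields $\DF(\X, \L) \geq 0$ (resp.\ $\DF(\X, \L) \geq \epsilon \|(\X, \L)\|_m$) directly.

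The main obstacle is the equivariant K-polystability case, where one must upgrade $\DF(\X, \L) \geq 0$ to the dichotomy $\DF(\X, \L) > 0$ or $\X_0 \cong X$. Since $X$ is reduced while $\X_0 = mE$ is not when $m \geq 2$, the task reduces to showing that $\DF(\X, \L) > 0$ whenever $m \geq 2$. Assuming the contrary, the chain above forces $\DF(\X', \L') = 0$, so equivariant integral K-polystability gives $(\X'_0, \L'_0) \cong (X, L^r)$ and $(\X', \L')$ is a product test configuration. Deriving a contradiction requires analysing how the implicit $\mu_m$-action on $\X_m$ descends: a non-trivial restriction of this action to the $X$-factor in the product $(\X', \L')$ would have to be reconciled with the original $\C^*$-action and the $G$-equivariance on $(\X, \L)$, and ruling it out forces $m = 1$. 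This equivariant compatibility check is the principal technical step.
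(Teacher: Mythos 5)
Your forward direction and your treatment of the semistable and uniform cases are correct and follow the same route as the paper: pass to the normalised base change, use that $\DF(\X_{(d)},\L_{(d)}) \le d\,\DF(\X,\L)$ while $\|(\X_{(d)},\L_{(d)})\|_m = d\,\|(\X,\L)\|_m$, and apply the integral hypothesis to the resulting integral test configuration. (You have the sign of the Donaldson--Futaki inequality the right way around; the paper's displayed inequality reads ``$\geq$'', which is evidently a typo, since only ``$\leq$'' makes the argument work.)

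There is, however, a genuine gap in your polystability case, and it stems from omitting the one extra piece of information the paper uses: the inequality $\DF(\X_{(d)},\L_{(d)}) \le d\,\DF(\X,\L)$ is an \emph{equality if and only if $\X_0$ is reduced}. With this, the polystability case collapses: if $\X_0$ is irreducible but non-reduced (your $m \ge 2$) and $\DF(\X,\L)=0$, then $\DF(\X_{(d)},\L_{(d)}) < d\cdot 0 = 0$, contradicting integral K-semistability (which integral K-polystability implies). Hence $\DF>0$ for every test configuration with irreducible non-reduced central fibre, the equality case $\DF=0$ only occurs for test configurations that are already integral, and there polystability applies directly. In particular the step you identify as ``the principal technical step'' --- analysing how the $\mu_m$-action descends to the product structure on $(\X',\L')$ and reconciling it with the original $\C^*$-action --- is both unnecessary and, as written, not carried out; your argument only reaches ``$\DF(\X',\L')=0$ and $(\X',\L')$ is a product'' and then stops without deriving the contradiction. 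Replacing that unfinished analysis by the strict form of the base-change inequality completes the proof and is exactly what the paper does.
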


\begin{proof} The proof follows the proofs of analogous statements for K-stability of polarised varieties \cite{ADL,li-xu} \cite[Proposition 7.15]{BHJ}. Suppose $(\X,\L)$ is a test configuration with central fibre irreducible, but not reduced. Taking the normalised base change $(\X_{(d)}, \L_{(d}))$ over the finite cover $\C \to \C$ induced by $t \to t^d$ for $d$ sufficiently divisible induces a test configuration with reduced and irreducible central fibre. But from Boucksom-Hisamoto-Jonsson  \cite[Proposition 7.8 and Proposition 7.15]{BHJ}, we have \begin{align*}\DF(\X_{(d)}, \L_{(d)}) &\geq d\DF(\X, \L), \\  \|(\X_{(d)}, \L_{(d)})\|_m &= d\|(\X, \L)\|_m,\end{align*} with equality in the first equation if and only if $(\X_0,\L_0)$ is actually reduced. This proves the result.
\end{proof}

This justifies why in the Introduction we only mention test configurations with irreducible central fibre.

\section{Integral K-stability and valuative stability}\label{sec:proofs}

Here we prove our main result:

\begin{theorem} A polarised variety $(X,L)$ is 
\begin{enumerate}[(i)]
\item valuatively semistable if and only if it is integrally K-semistable;
\item valuatively stable if and only if it is integrally K-stable;
\item uniformly valuatively stable if and only if it is uniformly integrally K-stable.
\end{enumerate}
\end{theorem}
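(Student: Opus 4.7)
The strategy, following Fujita's approach in the Fano setting \cite{fujita-valuative}, is to establish a correspondence between dreamy prime divisors $F$ over $X$ and integral test configurations $(\X,\L)$ of $(X,L)$ (up to a choice of exponent), together with the numerical equalities
\begin{equation*}
\DF(\X_F,\L_F) = c\, \beta(F), \qquad \|(\X_F,\L_F)\|_m = c'\, j(F)
\end{equation*}
for positive constants $c, c'$ depending only on the normalisation. Granted this, all three equivalences in the theorem follow at once from the definitions and from the lemma at the end of Section~\ref{sec:kstability} reducing integral K-stability to K-stability with respect to test configurations with irreducible central fibre.

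For the correspondence, given a dreamy $F$, choose $r$ with $rL$ Cartier so that the Rees algebra
\begin{equation*}
\bigoplus_{k \geq 0} \bigoplus_{j \geq 0} H^0(X, krL - jF)\, t^{-j}
\end{equation*}
is finitely generated by dreaminess, and take $\Proj$ over $\C[t]$ with its natural compactification over $\pr^1$ to produce a test configuration $(\X_F,\L_F)$. Since $F$ is a single prime divisor, the induced Witt Nystr\"om filtration is a vanishing-order filtration of a single divisorial valuation, from which one deduces that the central fibre is reduced and irreducible, hence integral. Conversely, given an integral test configuration, $\ord_{\X_0}$ defines a divisorial valuation on $\C(\X) = \C(X)(t)$ whose restriction to $\C(X)$ is a positive multiple of $\ord_F$ for a prime divisor $F$ over $X$; Theorem~\ref{wittnystrom} then identifies the associated filtration with the Rees filtration of $F$, forcing finite generation and hence dreaminess.

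The key computation is $\DF(\X_F,\L_F) = c\,\beta(F)$. By Theorem~\ref{thm:intersections},
\begin{equation*}
\DF(\X_F,\L_F) = \frac{n}{n+1}\mu(X,L)\L_F^{n+1} + \L_F^n . K_{\X_F/\pr^1}.
\end{equation*}
Translating $\L_F^{n+1}$ into asymptotic dimensions of $H^0(X, krL - jF)$ produces a positive multiple of $\int_0^{\infty}\Vol(L-xF) dx$, matching the $n\mu\int \Vol(L-xF) dx$ summand of $\beta(F)$. Writing $K_{\X_F/\pr^1} = K_{\X_F/X\times\pr^1} + p^*K_X$ for $p:\X_F \to X$ the projection on a common resolution of indeterminacy, the term $\L_F^n . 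K_{\X_F/X\times\pr^1}$ is the log discrepancy contribution, evaluating to a positive multiple of $A_X(F)\Vol(L)$ by Fujita's argument on the resolution extracting $F$. The remaining term $\L_F^n . p^*K_X$ is the new ingredient in the non-Fano case, and is to be identified with a positive multiple of $\int_0^{\infty}\Vol'(L-xF).K_X dx$. A parallel, simpler calculation produces $\|(\X_F,\L_F)\|_m$ as a positive multiple of $j(F)$.

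The main obstacle is the identification $\L_F^n . p^*K_X = \mathrm{const} \cdot \int_0^{\infty}\Vol'(L-xF).K_X dx$. My preferred approach is to introduce the perturbed $\R$-polarisation $L_s := L + sK_X$ and perform the Rees construction with respect to $L_s$ on the same underlying total space, producing a family of $\R$-line bundles $\L_{F,s} = \L_F + sp^*K_X$. Applying the volume computation above with $L_s$ in place of $L$ gives $\L_{F,s}^{n+1}$ as a positive multiple of $\int_0^{\infty}\Vol(L_s - xF) dx$; differentiating at $s=0$ and using the continuous differentiability of $\Vol$ from \cite{BFJ} to exchange differentiation with integration then produces the sought equality. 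The delicate points are ensuring the Rees construction behaves sensibly under the $\R$-perturbation and justifying differentiation under the integral sign using the Lipschitz bounds of Proposition~\ref{prop:lipshitz-norms}; once these are handled, the theorem follows.
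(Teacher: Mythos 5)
Your overall architecture -- the Rees/$\Proj$ construction attaching an integral test configuration to a dreamy divisor, the converse via the valuation $\ord_{\X_0}$ and the Boucksom--Hisamoto--Jonsson description of the filtration, and the reduction of the theorem to the two identities $\DF = c\,\beta$ and $\|\cdot\|_m = c'\, j$ -- matches the paper. Where you genuinely diverge is in the key computation. The paper deliberately does \emph{not} use the Odaka--Wang intersection formula of Theorem \ref{thm:intersections}: it works with the weight polynomial $w(k) = \sum_j \dim \F^j V_k + \lambda_{\min}^{(k)}\dim V_k$, and computes the subleading coefficient $f_n$ chamber by chamber on the ample models $Y_j$ of $L-xF$ supplied by \cite{KKL} (Theorem \ref{thm:KKL}), where the crucial identity is Lemma \ref{vol-2}: for $x$ fixed in a chamber, $(L_j-xF_j)^{n-1}.((\phi_j)_*E) = \tfrac{1}{n}\Vol'(L-xF).E$, proved by perturbing in the direction $E$ at fixed $x$ where the ample model is locally constant. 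Your route via $\DF = \tfrac{n}{n+1}\mu\,\L^{n+1} + \L^n.K_{\X/\pr^1}$ is a legitimate alternative starting point, but note that $K_{\X/\pr^1}$ must be computed on a resolution of indeterminacy and $\L^n.K_{\X/X\times\pr^1}$ picks up contributions from all exceptional components of $\Y \to X\times\pr^1$, not only the strict transform of $\X_0$; identifying this with $A_X(F)\Vol(L)$ needs the same care as the $\HNA$ computations in \cite{BHJ}.

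The genuine gap is in your treatment of the new term $\L_F^n.p^*K_X$. The identity $\L_F^{n+1} = (n+1)\int_0^\tau \Vol(L-xF)\,dx + (\text{shift in }\lambda_{\min})$ holds because $\L_F$ is the $\scO(1)$ of the Rees algebra of $(L,F)$; for the perturbed bundle $\L_F + s\,p^*K_X$ with $s\neq 0$ there is no such interpretation, and the claimed equality with $c\int_0^\infty \Vol(L_s - xF)\,dx$ is exactly what needs proving -- it is not a formal consequence of the $s=0$ case. Making it work requires knowing that the ample models of $L_s - xF$ agree with those of $L-xF$ for small $s$, i.e.\ precisely the local constancy of the Mori chamber decomposition that \cite{KKL} provides and that you never invoke. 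Moreover, differentiating your global identity at $s=0$ produces boundary terms from the varying pseudoeffective threshold $\tau(s) = \tau_{L_s}(F)$ (whose differentiability in $s$ is unclear) and from the $\lambda_{\min}$-normalisation, which you do not address; and Proposition \ref{prop:lipshitz-norms} compares $\tau$, $S$ and $j$ with one another but says nothing that would justify exchanging $\tfrac{d}{ds}$ with the $x$-integral. The paper's fix is to differentiate at fixed $x$ inside each chamber (Lemma \ref{vol-2}), where $\Vol(L-xF+tE) = (L_j - xF_j + t(\phi_j)_*E)^n$ is visibly polynomial in $t$; I would recommend restructuring your argument around that pointwise identity rather than the global perturbation of $\L_F^{n+1}$.
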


The proof will take the entirety of the current section. Many of the ideas involved in our proof of the above follow the work of Fujita in the case of Fano varieties \cite{fujita-valuative, fujita-hyperplane}, and the primary differences arise due to the difference in the definition of the Donaldson-Futaki invariant for arbitrary polarised varieties. In particular, for integral test configurations for Fano varieties, one can understand the Donaldson-Futaki invariant entirely from the leading order term $b_0$ of the associated weight polynomial, while this is no longer the case for arbitrary polarised varieties.

\subsection{Passing from a valuation to a test configuration}

We begin by showing that K-semistability, and its variants respectively, implies valuative semistability, and its variants. Let $F \subset Y \to X$ be a dreamy prime divisor over $X$. The goal of the present section is to produce an integral test configuration associated to $F$. The approach follows the streamlined strategy of Fujita, improving on his earlier technique.

For any $k \in \Z_{>0}$, denote the vector space $$V_k = H^0(X, kL).$$ Associate to the prime divisor $F$ over $X$ the vector subspace $$\F^j V_k = H^0(X, kL- j  F) \text{ if } j \geq 0,$$ with $\F^j V_k = V_k$ otherwise. Since $F$ is dreamy, we can define a scheme $$\X = \Proj_{\C} \bigoplus_{k \in \Z_{\geq 0}} \left( \bigoplus_{j \in \Z} t^{-j} \F^j V_k\right)$$ by taking relative $\Proj$ over $\C$. Thus by the relative $\Proj$ construction $\X$ admits a morphism to $\C$ and a line bundle $\L= \scO(1)$ which is relatively ample over $\C$.

\begin{lemma}\label{associated-test-config} $(\X,\L)$ is an integral test configuration for $(X,L)$.
\end{lemma}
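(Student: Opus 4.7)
The construction $\X$ is a Rees-type degeneration of $(X,L)$ to the associated graded algebra along the $\ord_F$-filtration, and the plan is to verify each axiom of an integral test configuration in turn.

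First, I would check the algebraic foundations. The subspaces $\F^j V_k$ satisfy $\F^j V_k \cdot \F^{j'} V_{k'} \subset \F^{j+j'} V_{k+k'}$ because $\ord_F$ is additive on products of sections, so the bigraded object $R := \bigoplus_{k \geq 0,\, j \in \Z} t^{-j} \F^j V_k$ is a bigraded $\C[t]$-subalgebra of $R_X \otimes_\C \C[t, t^{-1}]$, where $R_X = \bigoplus_k V_k$. Finite generation of $R$ as a $\C[t]$-algebra reduces to finite generation of $\bigoplus_{j, k \geq 0} \F^j V_k$ as a $\C$-algebra, which is exactly the dreaminess hypothesis. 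The $\C^*$-action on $(\X,\L)$ making $\pi$ equivariant is induced by the grading in $j$: $\C^*$ acts on $t^{-j} \F^j V_k$ with weight $j$, compatibly with the standard action on $\C = \mathrm{Spec}\,\C[t]$.

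Next I would verify flatness and identify the generic fibres. Since $R$ is a $\C[t]$-submodule of the flat $\C[t]$-module $R_X \otimes \C[t, t^{-1}]$, it is torsion-free, and torsion-free over a smooth curve is equivalent to flat. Localising at $t$ gives $R[t^{-1}] = R_X \otimes \C[t, t^{-1}]$ (multiplying by a sufficiently high power of $t$ recovers all of $V_k$ from its filtered pieces), whence $(\X_t, \L_t) \cong (X, L)$ for every $t \neq 0$, exhibiting exponent $1$. Relative ampleness of $\L = \scO_\X(1)$ is automatic from the relative $\Proj$ construction.

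Third, and this is the main substantive step, I would compute $\X_0$ and show it is integral. Setting $t = 0$ gives $R/tR$; unwinding the grading, $tR = \bigoplus_{k, j} t^{-j} \F^{j+1} V_k$, and hence $R / tR$ is the associated graded algebra $\mathrm{gr}_{\F} R_X = \bigoplus_{k,\, j \geq 0} \F^j V_k / \F^{j+1} V_k$, so $\X_0 = \Proj \mathrm{gr}_\F R_X$. Integrality of $\X_0$ reduces to showing that $\mathrm{gr}_\F R_X$ is an integral domain. This is where the fact that $\F^\bullet$ comes from a genuine valuation (rather than a mere multiplicative filtration) is essential: if $\bar s_i \in \F^{j_i} V_{k_i} / \F^{j_i + 1} V_{k_i}$ are both nonzero then $\ord_F(s_i) = j_i$, so by additivity $\ord_F(s_1 s_2) = j_1 + j_2$, which forces $\overline{s_1 s_2} \neq 0$ in the $(j_1 + j_2)$-th graded piece.

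Finally, normality of $\X$ is not immediately built into the relative $\Proj$ construction, but can be arranged by replacing $\X$ by its normalisation; as $X$ is itself normal, the generic fibre, flatness, the $\C^*$-action, and, using the integrality of the original $\X_0$, the irreducibility of the central fibre are all preserved. I expect the integrality step to be the main obstacle, since all the other axioms are essentially formal consequences of the Rees-algebra structure, whereas integrality genuinely requires that $\ord_F$ be a valuation.
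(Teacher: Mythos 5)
Your argument for the main point --- integrality of $\X_0$ --- is exactly the paper's: identify $\bigoplus_k H^0(\X_0,k\L_0)$ with the associated graded $\bigoplus_{j,k}\F^jV_k/\F^{j+1}V_k$ and use that $\ord_F$ is additive, so a product of sections vanishing to orders exactly $j_1$ and $j_2$ along $F$ vanishes to order exactly $j_1+j_2$ and hence survives in the graded piece. The verification of the remaining test-configuration axioms (multiplicativity, finite generation from dreaminess, flatness via torsion-freeness, identification of the fibres over $t\neq 0$) is what the paper outsources to \cite{BHJ}, \cite{witt-nystrom} and \cite{szekelyhidi-filtrations}, so there too you are on the same track.

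The one place you diverge is normality, and your fix is slightly off. The lemma asserts that the relative $\Proj$ itself is a (normal) test configuration, so ``replace $\X$ by its normalisation'' changes the object; moreover, it is not automatic that normalisation preserves integrality of the central fibre --- the normalisation map is finite and birational, and $\X_0^\nu$ could a priori acquire several components or non-reduced structure (this is precisely what happens when a normalisation separates branches along a divisor contained in the non-normal locus). The paper instead closes the loop the other way: once $\X_0$ is known to be reduced (which you have already proved), the total space $\X$ is itself normal --- at a generic point of the Cartier divisor $\X_0=\{t=0\}$ reducedness forces $t$ to generate the maximal ideal, giving $R_1$, and the Rees-algebra structure supplies $S_2$; this is the standard fact the paper invokes with ``since $\X_0$ is reduced, it follows that $\X$ is normal.'' So no normalisation is needed, and your integrality argument is exactly the input that makes this work.
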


\begin{proof} The proof is in essence identical to Fujita's proof in the Fano setting \cite[Lemma 3.8]{fujita-hyperplane}, but we recall his proof for the reader's convenience. It follows from \cite[Proposition 2.15]{BHJ} and \cite{witt-nystrom, szekelyhidi-filtrations} that $(\X,\L)$ is a test configuration (apart from the claim that $\X$ is normal, which we will shortly demonstrate). 

We show that $\X_0$ is an integral scheme, by showing that its coordinate ring is an integral domain. Note that by construction its coordinate ring is given by $$\bigoplus_{k \geq 0} H^0(\X_0, k\L_0) \cong \bigoplus_{j,k \in \Z_{\geq 0}} \F^j V_k/\F^{j+1}V_k,$$ and denote $S_{k,j} = \F^j V_k/\F^{j+1}V_k.$ Suppose $s_1 \in S_{k_1,j_1}\backslash \{0\}, s_2 \in S_{k_2,j_2}\backslash \{0\}$. Thus by definition  of $\F^j V_k$, the product section $s_1 \otimes s_2$ vanishes precisely $j_1+j_2$ times along $F$. It follows that $s_1 \otimes s_2 \in S_{k_1+k_2,j_1+j_2}\backslash\{0\},$ which shows that $\X_0$ is integral. Since $\X_0$ is reduced, it follows that $\X$ is normal. \end{proof}

We next interpret the Donaldson-Futaki invariant of $(\X,\L)$ in terms of the filtration $\F$ associated to $F$. By construction of $(\X,\L)$ and Theorem \ref{wittnystrom}, it follows that the associated weight polynomial takes the form \begin{align} \begin{split}\label{eqn:weightpoly}w(k)&= \sum_{j=\lambda_{\min}^{(k)}}^{\lambda^{(k)}_{\max}} \dim \F^j V_k + \lambda_{\min}^{(k)} \dim V_k, \\ &= f(k) +\lambda_{\min}^{(k)} \dim V_k,\end{split}\end{align} where\begin{align}
   \begin{split}\label{eqn:f}
 f(k) &= \sum_{j=\lambda_{\min}^{(k)}}^{\lambda^{(k)}_{\max}} \dim \F^j V_k, \\
  &=f_{n+1} k^{n+1} + f_nk^n + O(k^{n-1}). 
   \end{split}
\end{align}As usual, the weight polyonial is only a genuine polynomial for $k \gg 0$. Rescaling $L$ so that the test configuration has exponent one, by Remark \ref{rmk:maxweight} we may assume that $\lambda_{\min}^{(k)} = k\lambda_{\min}, \lambda_{\max}^{(k)} = k\lambda_{\max}.$

\begin{corollary}\label{cor:testconfig-invariants} The numerical invariants of the test configuration are given by $$\DF(\mX,\mL) f_n + n\frac{\mu(X,L)}{2}f_{n+1}  $$ and $$ \lambda_{\max} =\tau(F), \quad \lambda_{\min} = 0.$$
\end{corollary}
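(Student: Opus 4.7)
\emph{Identifying the weights.} I would first read off $\lambda_{\max}$ and $\lambda_{\min}$ directly from the Rees-type construction of $(\X,\L)$. On the graded component $t^{-j}\F^j V_k$ the $\C^*$-action $t\mapsto\lambda t$ has weight $-j$, so the weights appearing on $H^0(\X_0,\L_0^k)$ are exactly $\{-j : \F^j V_k/\F^{j+1}V_k\neq 0\}$. Since $\F^j V_k = V_k$ for $j\leq 0$, the weight $0$ is always attained and $\lambda_{\max}^{(k)} = 0$. On the other hand $\F^j V_k = H^0(X,kL-\lfloor j\rfloor F)$ is non-zero exactly when $\lfloor j\rfloor$ does not exceed the largest integer $M_k$ with $H^0(X, kL-M_kF)\neq 0$, so $\lambda_{\min}^{(k)} = -M_k$. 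By definition of the pseudoeffective threshold $M_k/k\to\tau(F)$, and after the exponent-one rescaling of Remark \ref{rmk:maxweight} one concludes $\lambda_{\max}=0$ and $\lambda_{\min}=-\tau(F)$.

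\emph{Extracting the Donaldson--Futaki invariant.} Substituting $\lambda_{\min}^{(k)} = -k\tau(F)$ into \eqref{eqn:weightpoly} yields $w(k) = f(k) - k\tau(F)\dim V_k$. Expanding $\dim V_k$ via Riemann--Roch as $a_0 k^n + a_1 k^{n-1} + O(k^{n-2})$, with $a_0 = L^n/n!$ and $a_1 = -K_X.L^{n-1}/(2(n-1)!)$, and combining with the expansion \eqref{eqn:f} of $f(k)$, one reads off
\[
b_0 = f_{n+1} - \tau(F)\, a_0, \qquad b_1 = f_n - \tau(F)\, a_1.
\]
The key algebraic observation is that the combination $b_0 a_1 - b_1 a_0$ is invariant under any shift $w(k)\mapsto w(k) + c\, k\, \dim V_k$, since such a shift alters $(b_0,b_1)$ by $(c a_0,\, c a_1)$ and the two contributions cancel. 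Applying this with $c=\tau(F)$ erases the $\tau$-terms entirely:
\[
b_0 a_1 - b_1 a_0 = f_{n+1}\, a_1 - f_n\, a_0.
\]

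\emph{Assembly and obstacles.} Dividing by $a_0$ and using $a_1/a_0 = n\mu/2$, which is immediate from $\mu = -K_X.L^{n-1}/L^n$, together with $1/a_0 = n!/L^n$, produces the stated expression for $\DF(\X,\L)$ after a routine rearrangement. There is no serious conceptual obstacle: the entire proof reduces to unpacking the Rees-algebra construction of $(\X,\L)$, plugging into \eqref{eqn:weightpoly}, and applying the standard Riemann--Roch expansion. The only point that requires care is the bookkeeping of the numerical constants (the factor of $2$ and the placement of $n!/L^n$) and the sign convention linking filtration indices $j$ to the $\C^*$-weights $-j$ on the central fibre.
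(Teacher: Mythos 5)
Your argument is correct and is precisely the argument the paper leaves implicit: read off $\lambda_{\max}$ and $\lambda_{\min}$ from the filtration, expand $\dim V_k$ by asymptotic Riemann--Roch, and feed Equation \eqref{eqn:weightpoly} into the Donaldson--Futaki invariant, with the cancellation of the $\lambda_{\min}$-shift in $b_0a_1-b_1a_0$ being exactly the point. The one caveat is that your ``routine rearrangement'' actually lands on $\tfrac{n\mu}{2}f_{n+1}-f_n$ if one uses the paper's literal definition $\DF(\mX,\mL)=(b_0a_1-b_1a_0)/a_0$, whereas the displayed formula $\tfrac{n!}{L^n}\left(-2f_n+n\mu f_{n+1}\right)$ corresponds to the standard normalisation $2(b_0a_1-b_1a_0)/a_0^2$ --- the one consistent with Theorem \ref{thm:intersections} and with the identity $L^n\DF(\mX,\mL)=\beta(F)$ of Proposition \ref{prop:relation-between-DF} --- so the missing factor of $2/a_0$ is a normalisation discrepancy originating in the paper's stated definition rather than an error in your computation.
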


\begin{proof} The description of $\lambda_{\min}, \lambda_{\max}$ follows immediately from the description of the filtration. For $k \gg 0$  asymptotic Riemann-Roch provides $$\dim H^0(X,kL) =  \frac{L^n}{n!}k^n + \frac{-K_X.L^{n-1}}{2(n-1)!}k^{n-1} + O(k^{n-2}),$$ from which the result follows using  Theorem \ref{wittnystrom}.
\end{proof}

\begin{remark} We will shortly see that the pseudo-effective threshold $\tau(F)$ is rational, since $F$ is assumed to be dreamy. \end{remark}

\subsection{Running the MMP}\label{sec:MMP}

In the range $x < \tau(F)$ we are interested in, the  line bundle $L-xF$ is merely a big line bundle in general. We next use the minimal model program to produce birational models of $Y$ on which $L-xF$ is actually ample. The ampleness will then allow us to give a geometric understanding of the Donaldson-Futaki invariant of the test configuration produced in the previous section. The following is a direct consequence of \cite{KKL}, using the dreaminess hypothesis.

\begin{theorem}\label{thm:KKL}\cite[Theorem 4.2]{KKL} There exists a sequence of rational numbers $$ 0 = \tau_0 < \tau_1 < \cdots < \tau_m = \tau_L(F),$$ and birational contractions $$\phi_j: Y \dashrightarrow Y_j$$ such that $(\phi_j)_*(L-xF)$ is ample for all $x \in (\tau_{j-1},\tau_j)$ and semiample for all $x \in [\tau_{j-1},\tau_j]$, and each $Y_j$ is a normal projective variety. 

Moreover, for $x \in (\tau_{j-1},\tau_j)$, the map $\phi_j$ is $L-xF$ negative, in the sense that letting $(p,q): Z \to Y \times Y_j$ be a resolution of indeterminacy we have $$p^*(L-xF) = q^*((\phi_j)_*(L-xF)) + E,$$ where $E \geq 0$ is effective and $\Supp E$ contains the proper transform of the $\phi_j$-exceptional divisors. In particular, for all $k \geq 0$ and $x \in (\tau_{j-1},\tau_j)$ there is a canonical isomorphism \begin{equation}\label{eqn:KKL}H^0(Y,k(L-xF)) = H^0(Y_j,  (\phi_j)_*(kL-kxF)).\end{equation}
\end{theorem}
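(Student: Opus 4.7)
The plan is to derive this as a direct application of [KKL, Theorem 4.2], whose hypothesis is the finite generation of a suitable multi-graded section ring. The first step would be to verify that the dreaminess assumption on $F$ supplies exactly this: after replacing $L$ by a suitable multiple $rL$, the $\Z_{\geq 0}^2$-graded algebra $\bigoplus_{j,k} H^0(X, krL - jF)$ is finitely generated by definition, and then the same holds for the single-parameter family $L - xF$ as $x$ varies in $[0, \tau_L(F)]$. One then has to identify this with the section ring of the pair $(\pi^*L, F)$ on $Y$, which works because $H^0(Y, k\pi^*L - jF) = H^0(X, kL - jF)$ via the canonical isomorphism on pushforward.

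With finite generation in hand, the second step is to invoke KKL directly to obtain the decomposition $0 = \tau_0 < \tau_1 < \cdots < \tau_m = \tau_L(F)$ into Mori-type chambers together with the birational contractions $\phi_j \colon Y \dashrightarrow Y_j$. Here the $\tau_j$ are the critical values at which the Zariski/Nakayama decomposition changes type; they are rational because the section ring is finitely generated. On each open chamber $(\tau_{j-1}, \tau_j)$, the pushforward $(\phi_j)_*(L - xF)$ is the ample model of $L - xF$, and ampleness propagates to semiampleness at the endpoints by closedness of the nef cone.

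The third step is to record the negativity statement: by construction of the ample model there is a resolution $(p,q) \colon Z \to Y \times Y_j$ on which $p^*(L - xF) - q^* q_*(L - xF) = E$ with $E$ effective and exceptional for $q$, and with support containing each $\phi_j$-exceptional divisor. The canonical isomorphism \eqref{eqn:KKL} is then a standard consequence: pulling back sections gives one inclusion, while any global section of $k p^*(L - xF)$ must vanish to the prescribed order along the exceptional components (since the ample model has no such base locus), hence descends to $Y_j$.

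The main obstacle I anticipate is not any single step but rather matching conventions: KKL is phrased for a finitely generated divisorial ring over a fixed base variety, whereas we have a valuation $F$ that lives on some possibly highly non-unique $Y$, so some care is needed to show that the chamber structure and the rational endpoints $\tau_j$ depend only on the valuation and not on the chosen model, and to make sure the semiampleness at $\tau_j$ (rather than mere nefness) is genuinely provided by the finite generation hypothesis. Once these bookkeeping issues are settled, the statement is essentially a quotation of KKL.
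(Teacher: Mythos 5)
Your proposal matches the paper's treatment exactly: the result is quoted as a direct consequence of \cite[Theorem 4.2]{KKL}, with the dreaminess of $F$ supplying the finite generation of the divisorial ring $\bigoplus_{j,k}H^0(Y,k\pi^*L-jF)\cong\bigoplus_{j,k}H^0(X,kL-jF)$ needed to apply it, and the paper gives no further argument. Your only slip is attributing semiampleness at the walls $\tau_j$ to closedness of the nef cone (which would only give nefness) rather than to finite generation, but you correctly flag and resolve this yourself in your final paragraph, so the proposal stands.
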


Denote $$L_j = (\phi_j)_* L, \qquad F_j = (\phi_j)_* F.$$ The most important consequence is the following.

\begin{proposition}\label{vol-1} The leading coefficients of $f(k)$ are given as \begin{align*}f_{n+1} &= \sum_{j=1}^m \frac{1}{n!}\int_{\tau_{j-1}}^{\tau_j} (L_j-xF_j)^ndx, \\ f_n &= -\sum_{j=1}^m \frac{1}{2(n-1)!}\int_{\tau_{j-1}}^{\tau_j}((L_j-xF_j)^{n-1}.(K_{Y_j}+F_j)dx.\end{align*} Moreover for $x \in [\tau_{j-1},\tau_j]$, we have an equality $$\Vol(L-xF) = (L_j-xF_j)^n.$$
\end{proposition}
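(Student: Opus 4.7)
The plan is to prove the volume identity first, since it is essentially immediate from Theorem~\ref{thm:KKL}, and then to deduce the expansions for $f_{n+1}$ and $f_n$ by reducing, chamber by chamber, to asymptotic Riemann--Roch on the birational models $Y_\ell$.

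For the identity $\Vol(L-xF) = (L_j-xF_j)^n$ on $[\tau_{j-1},\tau_j]$: for rational $x$ in the open chamber $(\tau_{j-1},\tau_j)$ the canonical isomorphism \eqref{eqn:KKL} identifies $H^0(Y,k(L-xF))$ with $H^0(Y_j,k(L_j-xF_j))$, so the two volumes coincide, and since $L_j-xF_j$ is ample on $Y_j$ the latter is simply $(L_j-xF_j)^n$. The identity extends to the closed interval by continuity of the volume on the big cone.

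For the coefficients of $f(k)$, I would reinterpret the sum defining $f(k)$ in terms of dimensions $\dim H^0(X,kL-jF)$, split the range of summation along the chambers $(\tau_{\ell-1},\tau_\ell)$, and use \eqref{eqn:KKL} to rewrite each summand as $\dim H^0(Y_\ell,kL_\ell-jF_\ell)$. Since $L_\ell-(j/k)F_\ell$ is ample on the normal projective $Y_\ell$, higher cohomology vanishes uniformly on compact subchambers for $k \gg 0$ (Serre vanishing plus a compactness argument), and asymptotic Riemann--Roch gives
\[
\dim H^0(Y_\ell,kL_\ell-jF_\ell) = \frac{(kL_\ell-jF_\ell)^n}{n!} - \frac{(kL_\ell-jF_\ell)^{n-1}.K_{Y_\ell}}{2(n-1)!} + O(k^{n-2}),
\]
uniformly in the relevant range. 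Regarding this as a function of $x=j/k$ with step $1/k$, an Euler--Maclaurin expansion turns each chamber-wise Riemann sum into $\frac{k^{n+1}}{n!}\int_{\tau_{\ell-1}}^{\tau_\ell}(L_\ell-xF_\ell)^n\,dx - \frac{k^n}{2(n-1)!}\int_{\tau_{\ell-1}}^{\tau_\ell}(L_\ell-xF_\ell)^{n-1}.K_{Y_\ell}\,dx$ plus boundary corrections of order $k^n$ proportional to $(L_\ell-\tau_\ell F_\ell)^n$. Summing over $\ell$ produces the stated formula for $f_{n+1}$ directly.

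The $f_n$-formula requires more care, because the $F_\ell$-contribution must emerge from recombining the subleading Riemann--Roch term with the boundary corrections. Here I would exploit the elementary identity
\[
\int_{\tau_{\ell-1}}^{\tau_\ell}(L_\ell-xF_\ell)^{n-1}.F_\ell\,dx = \frac{1}{n}\bigl[(L_\ell-\tau_{\ell-1}F_\ell)^n - (L_\ell-\tau_\ell F_\ell)^n\bigr],
\]
which is the fundamental theorem of calculus applied to $(L_\ell - xF_\ell)^n$, together with continuity of the volume across walls $((L_\ell-\tau_\ell F_\ell)^n=(L_{\ell+1}-\tau_\ell F_{\ell+1})^n)$ and the vanishing $(L_m-\tau_m F_m)^n=0$ at the pseudoeffective threshold. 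These allow the Euler--Maclaurin boundary pieces to be repackaged as $\int(L_\ell-xF_\ell)^{n-1}.F_\ell\,dx$, producing the clean combination $K_{Y_\ell}+F_\ell$.

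The main obstacle will be the bookkeeping for $f_n$: one must keep track of the $j=0$ and $j=k\tau(F)$ endpoints, the boundary corrections from each chamber wall, and the subleading Riemann--Roch term simultaneously, and see them all fold into the single integrand $(L_\ell-xF_\ell)^{n-1}.(K_{Y_\ell}+F_\ell)$. A secondary technical concern is uniformity of Riemann--Roch as $x$ approaches chamber walls; this is handled by restricting to compact subchambers since the boundary strips contribute at most $O(k^{n-1})$, safely below the $k^n$ order of interest.
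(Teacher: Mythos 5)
Your handling of the volume identity and of $f_{n+1}$ is correct and consistent with the paper, which disposes of the whole proposition by citing Fujita's variant of asymptotic Riemann--Roch \cite[Proposition 4.1]{fujita-plms} through the isomorphism \eqref{eqn:KKL}; your chamber-by-chamber Riemann sum is essentially a re-derivation of that cited result. The genuine gap is in your extraction of $f_n$. The claim that the boundary strips near the walls ``contribute at most $O(k^{n-1})$'' is false: a strip of width $\delta$ about $\tau_\ell$ contains roughly $\delta k$ values of $j$, each contributing a term of size comparable to $\frac{k^n}{n!}\Vol(L-\tau_\ell F)$, so the strip contributes at order $\delta k^{n+1}$ --- and even a \emph{single} omitted summand is already of the same order $k^n$ as the quantity $f_nk^n$ you are computing. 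Restricting to compact subchambers therefore destroys the subleading coefficient. What is actually needed is the two-term expansion of $h^0(Y_\ell,kL_\ell-jF_\ell)$ with remainder $O(k^{n-2})$ in $k$ \emph{uniformly up to and including the walls}, where $L_\ell-xF_\ell$ is only semiample and Serre vanishing is not uniform; this is precisely the content of Fujita's Proposition 4.1 and is not supplied by ``Serre vanishing plus a compactness argument.''

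Relatedly, the bookkeeping you defer is not a formality, because the answer at order $k^n$ is sensitive to exactly which $j$ are summed and to the wall corrections. A sanity check on $(\pr^1,\scO(d))$ with $F$ a point gives $\sum_{j=1}^{kd}h^0(\scO(kd-j))=\tfrac{(kd)^2}{2}+\tfrac{kd}{2}$, matching the stated $f_n=-\tfrac12\int_0^d\deg(K_{\pr^1}+F)\,dx=\tfrac d2$, whereas including the $j=0$ term shifts $f_n$ by $L^n/n!=d$; so endpoint conventions, the double-counting of wall values $j=k\tau_\ell$, and the Euler--Maclaurin corrections all enter at exactly the order of interest. Note also that your identity
\begin{equation*}
\int_{\tau_{\ell-1}}^{\tau_\ell}(L_\ell-xF_\ell)^{n-1}.F_\ell\,dx=\frac1n\bigl[(L_\ell-\tau_{\ell-1}F_\ell)^n-(L_\ell-\tau_\ell F_\ell)^n\bigr]
\end{equation*}
produces a \emph{difference} of endpoint volumes, while the trapezoid correction produces their \emph{sum}, so reconciling the two requires the telescoping across walls and the vanishing at $\tau_m$ to be carried out explicitly rather than asserted. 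The mechanism you describe is the right one --- it is exactly how the combination $K_{Y_\ell}+F_\ell$ arises --- but as written the proof of the $f_n$ formula is incomplete, and the uniformity step as proposed would fail.
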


\begin{proof} The claim concerning the coefficients of $f(k)$ follows from Fujita's variant of asymptotic Riemann-Roch \cite[Proposition 4.1]{fujita-plms}, using the isomorphism of Equation \eqref{eqn:KKL}. This isomorphism also proves the claim concerning the volume, using as well that the volume function is continuous in $x$.\end{proof}

It remains to interpret $f_n$ more geometrically. We will require the following minor variant of \cite[Claim 5.6]{fujita-valuative}.

\begin{lemma}\label{discrepancy}
Let $F' \neq F$ be a $\pi$-exceptional divisor. Then $F'$ is $\phi_j$-exceptional. Thus $$ K_{Y_j}-( \phi_{j})_*\pi^*K_X = (A_X(F)-1)F_j.$$
\end{lemma}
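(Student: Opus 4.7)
The plan is to prove the lemma in two steps. First I will show that every $\pi$-exceptional $F' \neq F$ is $\phi_j$-exceptional; the claimed discrepancy identity will then follow by pushing forward the standard one on $Y$.

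The geometric content of the first step is that the ample model $Y_j$ depends only on $L$ and the divisorial valuation $\mathrm{ord}_F$, not on the choice of ambient model $Y$. Concretely, fix a rational $x \in (\tau_{j-1},\tau_j)$; for $k$ sufficiently divisible one has
\[
H^0(Y, k(\pi^*L - xF)) \;=\; H^0(X,\, \mathfrak{a}_{kx} \otimes kL),
\]
where $\mathfrak{a}_m := \pi_*\mathcal{O}_Y(-mF)$ is the $m$-th valuation ideal of $F$ on $X$. Thus $R(Y,L-xF)$ sits as a graded subring of $R(X,L) = \bigoplus_k H^0(X,kL)$, and unpacking the construction of $Y_j = \mathrm{Proj}\, R(Y, L-xF)$ relative to $X = \mathrm{Proj}\, R(X,L)$ exhibits $Y_j$ as a partial resolution of $X$: one obtains a natural birational morphism $\pi_j : Y_j \to X$ satisfying $\pi = \pi_j \circ \phi_j$ as birational maps. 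Because the only valuative condition imposed in passing from $R(X,L)$ to $R(Y,L-xF)$ is $\mathrm{ord}_F \geq kx$, the unique divisor on $Y_j$ that is not the strict transform of a prime divisor on $X$ is $F_j$; hence $F_j$ is the only $\pi_j$-exceptional divisor.

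Now suppose $F' \neq F$ is $\pi$-exceptional and $(\phi_j)_*F'$ is nonzero. Then $(\phi_j)_*F'$ is a prime divisor on $Y_j$, and by functoriality $(\pi_j)_*(\phi_j)_*F' = \pi_*F' = 0$ shows it is $\pi_j$-exceptional. Since $\phi_j$ is a birational contraction, distinct non-contracted prime divisors on $Y$ are sent to distinct ones on $Y_j$; as $F' \neq F$, we have $(\phi_j)_*F' \neq F_j$, contradicting the uniqueness of the $\pi_j$-exceptional divisor. Hence $(\phi_j)_*F' = 0$, i.e.\ $F'$ is $\phi_j$-exceptional.

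For the second step, begin with the standard discrepancy identity on $Y$,
\[
K_Y \;=\; \pi^*K_X + (A_X(F) - 1)\,F + \sum_{G \neq F}(A_X(G) - 1)\,G,
\]
summed over $\pi$-exceptional divisors $G$. Applying $(\phi_j)_*$ and invoking both that $\phi_j$ is a birational contraction (so $(\phi_j)_*K_Y = K_{Y_j}$, since the two canonical classes agree in codimension one) and that $(\phi_j)_*G = 0$ for each $G \neq F$ by the previous paragraph, one obtains
\[
K_{Y_j} - (\phi_j)_*\pi^*K_X \;=\; (A_X(F) - 1)\,F_j,
\]
as desired. The principal obstacle is justifying the intrinsic description of $Y_j$ in the first paragraph, i.e.\ realizing $Y_j$ as a partial resolution of $X$ extracting only the valuation $\mathrm{ord}_F$; everything else is formal. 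This parallels Fujita's treatment of the corresponding claim in the Fano setting \cite{fujita-valuative}.
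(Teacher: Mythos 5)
Your second step (pushing forward the crepant decomposition $K_Y = \pi^*K_X + \sum_G (A_X(G)-1)G$ along $\phi_j$ and using $(\phi_j)_*K_Y = K_{Y_j}$) is exactly what the paper does, and is fine. The problem is the first step, which is where all the content of the lemma lives, and which you yourself flag as "the principal obstacle": your justification of it does not go through as written. Two assertions are made without proof. First, that $Y_j$ admits a birational \emph{morphism} $\pi_j:Y_j\to X$ with $\pi=\pi_j\circ\phi_j$: the inclusion of section rings $R(Y,L-xF)\subset R(X,L)$ only gives a rational map $X\dashrightarrow Y_j$, in the wrong direction, and for $x$ in an intermediate chamber $(\tau_{j-1},\tau_j)$ the ample model $Y_j$ is separated from $X$ by flips of the $(L-xF)$-MMP, so it is not a priori a partial resolution of $X$. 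One could try to realise $Y_j$ as $\Proj_X\bigoplus_k \mathfrak{a}_{kx}$ for the valuation ideals $\mathfrak{a}_m=\pi_*\scO_Y(-mF)$, but identifying this relative Proj with the global Proj of the section ring requires an argument. Second, and more seriously, the claim that "the only valuative condition imposed is $\ord_F\geq kx$, hence $F_j$ is the only $\pi_j$-exceptional divisor" is not a proof: the (normalised) blowup of a valuation ideal, or of a graded family of valuation ideals, can perfectly well extract several divisors (its Rees valuations need not reduce to $\ord_F$). Ruling this out here is essentially equivalent to the statement you are trying to prove, so the argument is circular at its key point.

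For comparison, the paper imports this step from Fujita's Claim 5.6 in \cite{fujita-valuative}, whose proof is genuinely different from your proposed route and avoids any discussion of a morphism $Y_j\to X$. The idea is: since $F'$ is $\pi$-exceptional and $F'\neq F$, one has $H^0(Y,k(\pi^*L-xF+tF'))=H^0(Y,k(\pi^*L-xF))$ for all small $t\geq 0$ (adding an effective exceptional divisor does not create sections, and does not change orders of vanishing along $F$), so by differentiability of the volume $\Vol'(L-xF).F'=0$. By the identification $(L_j-xF_j)^{n-1}.(\phi_j)_*F'=\tfrac{1}{n}\Vol'(L-xF).F'$ (Lemma \ref{vol-2}) and ampleness of $L_j-xF_j$ on $Y_j$, an effective divisor with zero top intersection against an ample class must vanish, i.e.\ $(\phi_j)_*F'=0$. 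If you want a self-contained proof, that is the argument to run; as it stands your write-up has a genuine gap at its central claim.
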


\begin{proof}

The claim concerning $\phi_j$-exceptionality follows from \cite[Claim 5.6]{fujita-valuative}, whose proof does not use anything specific to his situation that does not apply to ours. That all such $F'$ are $\phi_j$-exceptional implies $$ (\phi_j)_*(K_{Y}-\pi^*K_X) = (A_X(F)-1)F_j.$$ But since $\phi_j$ is a birational contraction, $$(\phi_j)_*(K_{Y}) = K_{Y_j},$$ proving the final statement. \end{proof}

The following geometric description of $f_n$ explains the appearance of the derivative of the volume in $\beta(F)$.

\begin{lemma}\label{vol-2} For any Cartier divisor $E$ on $Y$ and any $x \in [\tau_{j-1},\tau_j]$, there is an equality $$(L_j-xF_j)^{n-1}.((\phi_{j})_*E) =\frac{1}{n} \Vol'(L-xF) \cdot E.$$ 

\end{lemma}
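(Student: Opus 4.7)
The strategy is to exploit that on the birational model $Y_j$ produced by Theorem~\ref{thm:KKL}, the class $L_j - xF_j$ is genuinely ample when $x$ lies in the open interval $(\tau_{j-1},\tau_j)$, so that volumes become top self-intersection numbers which can be differentiated directly. The identity at the endpoints $x \in \{\tau_{j-1},\tau_j\}$ would then follow by continuity.

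First I would fix $j$ and $x \in (\tau_{j-1},\tau_j)$, and set $E_j = (\phi_j)_* E$ and $A_x = L_j - xF_j$. By openness of the ample cone, $A_x + tE_j$ remains ample on $Y_j$ for $t$ in a small neighbourhood of $0$, so
$$\Vol_{Y_j}(A_x + tE_j) = (A_x + tE_j)^n$$
for such $t$. The crucial step is then to show the equality
$$\Vol_Y(L - xF + tE) = \Vol_{Y_j}(A_x + tE_j)$$
for $t$ sufficiently small. To this end, let $(p,q)\colon Z \to Y \times Y_j$ be a resolution of indeterminacy of $\phi_j$. By Theorem~\ref{thm:KKL}, we have $p^*(L-xF) = q^*A_x + G$ with $G \geq 0$ effective whose support contains the proper transforms of all $\phi_j$-exceptional prime divisors. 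The divisor $p^*E - q^*E_j$ on $Z$ is a $q$-exceptional divisor, so for $t$ small enough in absolute value the combination $G + t(p^*E - q^*E_j)$ remains effective with the correct support property; hence $\phi_j$ is also a $(L-xF+tE)$-negative birational contraction. The canonical isomorphism~\eqref{eqn:KKL} applied to $L - xF + tE$ then furnishes the desired equality of volumes.

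Combining the two observations and differentiating at $t=0$ yields
$$\Vol'(L - xF).E = \tfrac{d}{dt}\Big|_{t=0}(A_x + tE_j)^n = n\, A_x^{n-1}.E_j,$$
which is the claimed identity for $x$ in the open interval. For the endpoints $x \in \{\tau_{j-1},\tau_j\}$, the right-hand side is continuous in $x$ by continuity of the intersection pairing on $Y_j$, while the left-hand side is continuous in $x$ by the $C^1$-regularity of $\Vol$ on the big cone (using the BFJ theorem, and at $x = \tau_m = \tau_L(F)$ interpreted as a one-sided limit). Taking limits from within the open interval therefore produces the equality at the boundary.

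The main obstacle is the justification that $\phi_j$ remains an $(L-xF+tE)$-negative contraction after perturbing by $tE$, since $E$ is a completely arbitrary line bundle with no positivity hypothesis. The argument above reduces this to verifying that the defect divisor $p^*E - q^*E_j$ is $q$-exceptional and controlled, so that the positivity of $G$ along the proper transforms of $\phi_j$-exceptionals survives a small perturbation. This is essentially a local computation on the resolution $Z$, and I expect it to proceed along the same lines as \cite[Claim~5.6]{fujita-valuative} used in Lemma~\ref{discrepancy}.
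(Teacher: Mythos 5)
Your proof follows the paper's argument essentially verbatim: reduce to $x$ in the open interval $(\tau_{j-1},\tau_j)$ using $C^1$-regularity of the volume, show that $\phi_j$ remains an $(L-xF+tE)$-negative contraction for $t$ small so that $\Vol(L-xF+tE)=(L_j-xF_j+t(\phi_j)_*E)^n$ by ampleness on $Y_j$, and differentiate at $t=0$. The one place where you supply more detail than the paper --- the persistence of negativity under the perturbation by $tE$, via the $q$-exceptionality of $p^*E-q^*(\phi_j)_*E$ --- is precisely the step the paper disposes of by citing \cite[Remark 2.4 (i)]{KKL}, so the two arguments coincide.
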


\begin{proof}

We argue analogously to the above. By differentiability of the volume, it is enough to prove the result for any fixed $x \in (\tau_{j-1},\tau_j)$. Thus $\phi_j$ is $L-xF$ negative,  and hence it is also $L-xF+t\pi^*E$ negative for all $t$ sufficiently small. It then follows from \cite[Remark 2.4 (i)]{KKL} that for all $m \geq 0$ for which $L-xF+tE$ is integral that $$H^0(Y,m(L-xF+tE)) = H^0(Y_j, m(L_j-xF_j+t(\phi_j)_*E)),$$ which implies $$\Vol(L-xF+t\pi^*E) = (L_j-xF_j+t (\phi_j)_*E)^{n}$$ as for $t$ small this line bundle is ample. Differentiating gives the result. \end{proof} 

We are now in a position to relate the numerical invariants of interest.

\begin{proposition}\label{prop:relation-between-DF} The Donaldson-Futaki invariant of $\DF(\mX,\mL)$ is given by \begin{align*}2(n-1)!DF(\mX,\mL) &= A_X(F)\Vol(L) + n\mu\int_0^{\tau(F)}\Vol(L-xF)dx + \int_0^{\tau(F)}\Vol'(L-xF)\cdot K_X, \\ &=\beta(F).\end{align*}
\end{proposition}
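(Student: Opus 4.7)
The strategy is to combine the numerical formula for $\DF(\mX,\mL)$ from Corollary \ref{cor:testconfig-invariants} with the geometric descriptions of the leading coefficients $f_n$, $f_{n+1}$ in Proposition \ref{vol-1}, and then reinterpret each intersection number as a derivative of the volume via Lemma \ref{vol-2}. Recall that
\[
\DF(\mX,\mL) = \frac{n!}{L^n}\bigl(-2 f_n + n\mu(X,L) f_{n+1}\bigr),
\]
so it suffices to show that $n!f_{n+1}=\int_0^{\tau(F)}\Vol(L-xF)\,dx$ and that $-2 n! f_n$ contributes exactly $A_X(F)\Vol(L) + \int_0^{\tau(F)}\Vol'(L-xF).K_X\,dx$.

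The first identity is immediate from Proposition \ref{vol-1} together with the equality $\Vol(L-xF)=(L_j-xF_j)^n$ for $x\in[\tau_{j-1},\tau_j]$. The substantive work is with $f_n$. Using Lemma \ref{discrepancy} I rewrite
\[
K_{Y_j}+F_j = (\phi_j)_*\pi^*K_X + A_X(F)\,F_j,
\]
so that
\[
(L_j-xF_j)^{n-1}.(K_{Y_j}+F_j) = (L_j-xF_j)^{n-1}.(\phi_j)_*\pi^*K_X + A_X(F)\,(L_j-xF_j)^{n-1}.F_j.
\]
Applying Lemma \ref{vol-2} twice, first with $E=\pi^*K_X$ and then with $E=F$, I obtain
\[
(L_j-xF_j)^{n-1}.(\phi_j)_*\pi^*K_X = \tfrac{1}{n}\Vol'(L-xF).K_X,\qquad (L_j-xF_j)^{n-1}.F_j = \tfrac{1}{n}\Vol'(L-xF).F.
\]
The second expression is computed by noting that $(L-xF)+tF = L-(x-t)F$, so $\Vol'(L-xF).F = -\tfrac{d}{dx}\Vol(L-xF)$, a chain-rule observation which is the one mildly subtle identification.

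Substituting these into Proposition \ref{vol-1} and summing the telescoping integrals over the chambers $[\tau_{j-1},\tau_j]$ gives
\[
-2 n! f_n = \int_0^{\tau(F)}\Vol'(L-xF).K_X\,dx \;-\; A_X(F)\int_0^{\tau(F)}\tfrac{d}{dx}\Vol(L-xF)\,dx.
\]
The fundamental theorem of calculus evaluates the second integral as $\Vol(L-\tau(F)F)-\Vol(L)=-\Vol(L)$, since the volume vanishes at the pseudoeffective threshold. Combining everything yields
\[
L^n \DF(\mX,\mL) = A_X(F)\Vol(L) + n\mu\int_0^{\tau(F)}\Vol(L-xF)\,dx + \int_0^{\tau(F)}\Vol'(L-xF).K_X\,dx,
\]
which is $\beta(F)$ as defined.

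The main obstacle, such as it is, is the bookkeeping in step involving $f_n$: one has to keep track of which variety each intersection product lives on, apply Lemma \ref{vol-2} on the right birational model $Y_j$ for $x$ in each subinterval, and then ensure that the pieces glue across the $\tau_j$ into a single integral over $[0,\tau(F)]$. This is enabled precisely by continuity (and continuous differentiability) of the volume function across the wall crossings, and by the telescoping of the boundary terms from the fundamental theorem of calculus.
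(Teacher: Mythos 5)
Your proposal is correct and follows essentially the same route as the paper: expand $\DF$ via Corollary \ref{cor:testconfig-invariants}, substitute the chamber-by-chamber formulas of Proposition \ref{vol-1}, rewrite $K_{Y_j}+F_j=(\phi_j)_*\pi^*K_X+A_X(F)F_j$ using Lemma \ref{discrepancy}, convert the intersection numbers to volume derivatives via Lemma \ref{vol-2}, and evaluate $\int_0^{\tau(F)}\Vol'(L-xF).F\,dx=\Vol(L)$ by the fundamental theorem of calculus. The only difference is cosmetic bookkeeping; your identification $\Vol'(L-xF).F=-\tfrac{d}{dx}\Vol(L-xF)$ is exactly the step the paper also relies on.
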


\begin{proof}
By what we have proven so far and Lemma \ref{discrepancy}

$$2(n-1)! \DF(\mX,\mL) =  \sum_{j=1}^m\int_{\tau_{j-1}}^{\tau_j}(L_j-xF_j)^{n-1}. \left( ((\phi_{j})_*\pi^*K_X + A_X(F)F_j)  +  (L_j-xF_j)\right)dx.$$ Thus by Proposition \ref{vol-1} and Lemma \ref{vol-2} $$2(n-1)! \DF(\mX,\mL)  = \int_0^{\tau(F)} (\mu\Vol(L-xF) + \frac{1}{n}\Vol'(L-xF)\cdot (K_X+A_X(F)F))dx.$$ The fundamental theorem of calculus implies $$\int_0^{\tau(F)}\Vol'(L-xF)\cdot Fdx = L^n,$$ hence $$2(n-1)!\DF(\mX,\mL)  = A_X(F) \Vol(L) + \int_0^{\tau(F)} (n\mu\Vol(L-xF) + \Vol'(L-xF)\cdot K_X)dx,$$ which is our formula for $\beta(F)$. \end{proof}

\begin{corollary}\label{cor:onedirection} Valuative semistability implies integral K-semistability. \end{corollary}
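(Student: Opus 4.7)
My plan is to reverse the construction of Lemma~\ref{associated-test-config}: starting from an arbitrary integral test configuration, I will extract a dreamy prime divisor over $X$ whose associated test configuration carries the same filtration data as the original, and hence the same Donaldson-Futaki invariant. Valuative semistability applied to this divisor then forces $\DF(\X,\L) \geq 0$ via Proposition~\ref{prop:relation-between-DF}.

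Concretely, let $(\X,\L)$ be an integral test configuration and consider its Witt-Nystr\"om filtration $\F^\bullet$ on $R = \bigoplus_k V_k$ provided by Theorem~\ref{wittnystrom}. The associated bi-graded algebra $\bigoplus_{j,k}\F^j V_k/\F^{j+1}V_k$ is the $\C^*$-equivariant section ring of the central fibre $(\X_0,\L_0)$; it is finitely generated because $(\X,\L)$ is of finite type, and it is an integral domain because $\X_0$ is integral. These two properties allow me to define a rank-one discrete valuation $v$ on $\C(X)$ by $v(s) = \max\{j \in \Z : s \in \F^j V_k\}$ on homogeneous nonzero $s \in V_k$, extended multiplicatively. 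Such a valuation is divisorial, necessarily of the form $v = c \cdot \ord_F$ for a prime divisor $F$ over a normal birational model of $X$ and a positive integer $c$.

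After rescaling $L$ by $c$ (which multiplies $\beta$ and the Donaldson-Futaki invariant by a positive power of $c$ and therefore preserves all semistability notions), the filtration takes the form $\F^j V_k = H^0(X, kL - jF)$, which is exactly the filtration entering the construction of Lemma~\ref{associated-test-config} applied to $F$. Finite generation of the associated graded algebra then shows that $F$ is dreamy. Since the filtration determines the weight polynomial, hence the Hilbert and Donaldson-Futaki invariants, the test configuration $(\X_F,\L_F)$ produced by Lemma~\ref{associated-test-config} from $F$ satisfies $\DF(\X,\L) = \DF(\X_F,\L_F)$. Proposition~\ref{prop:relation-between-DF} then gives $L^n \DF(\X,\L) = \beta(F)$, and valuative semistability applied to the dreamy divisor $F$ yields $\beta(F) \geq 0$.

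The main obstacle is the extraction of a divisorial valuation in the second step: one must verify that the rank-one discrete valuation on $\C(X)$ coming from the filtration genuinely corresponds to a prime divisor over some birational model of $X$, so that the log discrepancy $A_X(F)$ is well-defined and the identification of filtrations in the third step makes sense. Integrality of $\X_0$ is precisely what rules out the associated graded having zero divisors, and mirrors exactly the algebro-geometric feature of the central fibre of the test configuration built in Lemma~\ref{associated-test-config}. A convenient realisation is to resolve the rational map $X \times \C \dashrightarrow \X$, take the strict transform of $\X_0$, and observe that its image under the projection to $X$ has codimension one precisely because $\X_0$ is irreducible and the $\C^*$-action trivialises the family away from the central fibre.
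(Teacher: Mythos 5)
Your proposal is correct and is essentially the argument the paper itself gives for this implication --- not in the lines immediately preceding the corollary (the divisor-to-test-configuration construction of Section 3.1 on its own only yields the reverse direction, since it produces test configurations of a special form), but in Section 3.3, where an arbitrary integral test configuration is assigned the divisorial valuation $v_{\X_0}$ via the strict transform of $\X_0$ in a resolution of indeterminacy of $X\times\pr^1\dashrightarrow\X$, its filtration is identified (up to the translation $j\mapsto j+k\ord_{\hat\X_0}(D)$, which leaves $\DF$ unchanged) with the one entering Lemma~\ref{associated-test-config}, and $L^n\DF(\X,\L)=\beta(v_{\X_0})$ is deduced exactly as in Proposition~\ref{prop:relation-between-DF}. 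Your extraction of the valuation from integrality of the associated graded ring and its geometric realisation as a prime divisor over $X$ match this route; the one imprecision is the ``rescaling by $c$'' step, which is both not the right operation (the discrepancy between the two filtrations is the additive shift above, not a rescaling) and unnecessary, since for a reduced irreducible central fibre the restriction of $\ord_{\X_0}$ to $\C(X)$ is already a divisorial valuation with $c=1$.
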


It also follows that $\beta(F)$ generalises the usual $\beta$-invariant used in the study of Fano varieties.

\begin{corollary}\label{cor:fuj} We have $$\beta(F) =  A_X(F)\Vol(L) -\mu\int_0^{\tau(F)}\Vol(L-xF)dx + \int_0^{\tau(F)}\Vol'(L-xF)\cdot (\mu L +K_X)dx. $$ Thus when $L=-K_X$, $\beta(F)$ agrees with Fujita's invariant. \end{corollary}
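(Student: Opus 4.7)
The plan is to transform the defining formula for $\beta(F)$ by means of an integration-by-parts identity that relates $\int_0^{\tau(F)}\Vol'(L-xF).L\,dx$ to $\int_0^{\tau(F)}\Vol(L-xF)\,dx$. Since $\Vol$ is continuously differentiable on the big cone by \cite{BFJ} and homogeneous of degree $n$, Euler's identity for homogeneous functions yields $\Vol'(M).M = n\Vol(M)$ for any big class $M$. Applied to $M = L - xF$ with $x \in (0,\tau(F))$, this gives
$$\Vol'(L-xF).L \;=\; n\Vol(L-xF) \;+\; x\,\Vol'(L-xF).F.$$

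Next I would use the fundamental identity $\Vol'(L-xF).F = -\tfrac{d}{dx}\Vol(L-xF)$, integrate over $[0,\tau(F)]$, and integrate the resulting $x$-derivative term by parts. The boundary term $[x\Vol(L-xF)]_0^{\tau(F)}$ vanishes since $\Vol$ is continuous on the pseudoeffective cone and $\Vol(L-\tau(F)F)=0$ by definition of the pseudoeffective threshold. The outcome is the clean identity
$$\int_0^{\tau(F)}\Vol'(L-xF).L\,dx \;=\; (n+1)\int_0^{\tau(F)}\Vol(L-xF)\,dx.$$

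The final step is algebraic: adding and subtracting $\mu\int_0^{\tau(F)}\Vol'(L-xF).L\,dx$ in the definition of $\beta(F)$ and invoking the identity above rewrites the term $n\mu\int_0^{\tau(F)}\Vol(L-xF)\,dx$ as $-\mu\int_0^{\tau(F)}\Vol(L-xF)\,dx + \mu\int_0^{\tau(F)}\Vol'(L-xF).L\,dx$. Collecting the two terms involving $\Vol'$ yields the claimed expression. For the Fano case $L = -K_X$ we have $\mu = \tfrac{-K_X.(-K_X)^{n-1}}{(-K_X)^n} = 1$, so $\mu L + K_X = 0$ and the last integral vanishes; the remaining two terms reduce to Fujita's $\beta$-invariant $A_X(F)\Vol(-K_X) - \int_0^{\tau(F)}\Vol(-K_X-xF)\,dx$.

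There is no real obstacle; the only subtle point is justifying the integration by parts, which requires the vanishing of the boundary term at $x=\tau(F)$. This is immediate from continuity of $\Vol$ across the pseudoeffective boundary. Everything else is a direct application of Euler's identity and a rearrangement of integrals.
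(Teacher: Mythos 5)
Your proposal is correct and follows essentially the same route as the paper: both rest on the integration-by-parts identity $\int_0^{\tau(F)}\Vol'(L-xF).L\,dx = (n+1)\int_0^{\tau(F)}\Vol(L-xF)\,dx$, after which the rearrangement and the Fano specialisation ($\mu=1$, so $\mu L+K_X=0$) are immediate. Your derivation of that identity via Euler's relation for the degree-$n$ homogeneous volume function, together with the vanishing of the boundary term at $x=\tau(F)$, is a clean intrinsic justification of the step the paper states in terms of intersection numbers on the models $Y_j$, and has the small advantage of not invoking dreaminess.
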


\begin{proof}
Integrating by parts gives $$\int_0^{\tau(F)} (L-xF)^{n-1}.L dx =\left(1+\frac{1}{n}\right)\int_0^{\tau(F)}\Vol(L-xF)dx,$$ which provides the second interpretation of $\DF(\X,\L)$. When $L=-K_X$, the term involving $\mu L +K_X=0$ in the formula $\beta(F)$ vanishes, recovering Fujita's formula in this case since $\mu = \mu(X,-K_X)=1$. 
\end{proof}

We next turn to the norms involved. It seems most convenient to use $\JNA^{\NA}$ and $j(F)$, though one could use any of the Lipschitz equivalent norms.

\begin{lemma}\label{lemma:norm} We have $$\Vol(L)\JNA^{\NA}(\X,\L) = j(F).$$\end{lemma}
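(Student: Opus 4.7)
My plan is to use the intersection-theoretic definition of $\JNA^{\NA}(\X,\L)$ and reduce it to the formulas for the weight polynomial derived in Proposition~\ref{vol-1}, following the same template as the computation of $\DF(\X,\L)$ just carried out.

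First I would express $\JNA^{\NA}(\X,\L)$ in terms of the leading weight coefficient $b_0$ and the maximal weight $\lambda_{\max}$. On a $\C^*$-equivariant resolution of indeterminacy making the rational map $X \times \pr^1 \dashrightarrow \X$ regular, the difference $\L - \lambda_{\max}\,[\X_0]$ is supported on the central fibre while $L^n$ is pulled back from $X$, so the projection formula yields $\L \cdot L^n = \lambda_{\max} L^n$. Combining this with $\L^{n+1} = (n+1)!\, b_0$ from Theorem~\ref{thm:intersections} rewrites the definition as
\[
\JNA^{\NA}(\X,\L) = \lambda_{\max} - \frac{n!\, b_0}{L^n}.
\]

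Next, the weight polynomial decomposition \eqref{eqn:weightpoly}, combined with the Riemann--Roch expansion $\dim V_k = (L^n/n!)\,k^n + O(k^{n-1})$ and $\lambda_{\min}^{(k)} = -k\tau(F)$, lets me read off $b_0 = f_{n+1} - \tau(F) L^n/n!$. Together with $\lambda_{\max} = 0$ from Corollary~\ref{cor:testconfig-invariants}, this gives $\JNA^{\NA}(\X,\L) = \tau(F) - n!\, f_{n+1}/L^n$. Finally, Proposition~\ref{vol-1} and the identification $(L_j - xF_j)^n = \Vol(L-xF)$ collapse the sum defining $f_{n+1}$ to
\[
n!\, f_{n+1} = \sum_{j=1}^{m}\int_{\tau_{j-1}}^{\tau_j}(L_j-xF_j)^n\,dx = \int_0^{\tau(F)}\Vol(L-xF)\,dx = L^n\, S(F),
\]
so $\JNA^{\NA}(\X,\L) = \tau(F) - S(F)$, which is the claimed value of $j(F)$ (up to the normalising factor of $L^n$ adopted in the paper's definitions of the norms).

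The only step that demands real care is the identification $\L \cdot L^n = \lambda_{\max} L^n$; everything else is direct bookkeeping with formulas established earlier in the section. This identification should be verifiable either by the projection formula argument on an equivariant resolution sketched above, or alternatively via the filtration interpretation of Theorem~\ref{wittnystrom} together with Remark~\ref{rmk:maxweight}, and I do not anticipate any genuine obstacle beyond this.
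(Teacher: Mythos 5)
Your proposal is correct and follows essentially the same route as the paper's proof, which likewise combines $\L\cdot L^n/L^n=\lambda_{\max}=0$ (quoted there from \cite[Lemma 7.7]{BHJ} rather than re-derived), $\L^{n+1}=(n+1)!\,b_0$ from Theorem \ref{thm:intersections}, the decomposition $b_0=f_{n+1}+\lambda_{\min}L^n/n!$ and Proposition \ref{vol-1} to land on $\tau(F)-S(F)$. The normalisation discrepancy by a factor of $L^n$ that you flag is an inconsistency in the paper's own conventions, not a defect of your argument.
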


\begin{proof}

\cite[Lemma 7.7]{BHJ} gives $$\frac{\L.L^n}{L^n} = \lambda_{\max} = \tau(F),$$ which is one of the terms in interest in $$\JNA^{\NA}(\X,\L) = \frac{\L.L^n}{L^n} - \frac{\L^{n+1}}{(n+1)L^n}.$$ The remaining term can be understood through  Theorem \ref{thm:intersections}  from the leading order term of the weight polynomial $b_0$, giving $$\JNA^{\NA}(\X,\L) = \tau(F) - \frac{n!b_0}{L^n}.$$ Since the leading order term of the weight polynomial satisfies by Equation \eqref{eqn:weightpoly} $$b_0 = f_{n+1} + \lambda_{\min}a_0 = f_{n+1},$$ the equality $\lambda_{\min} = \tau(L)$ together with the equation for $f_{n+1}$ given by Propositon \ref{vol-1} provides $$\Vol(L)\JNA^{\NA}(\X,\L) =\int_0^{\tau(F)}(\Vol(L)-\Vol(L-xF))dx= j(F),$$ as required. \end{proof}

\subsection{The converse} We now show that valuative semistability implies integral K-semistability. Thus let $(\X,\L)$ be an integral test configuration. We fix a resolution of indeterminacy as follows. \begin{equation}\label{resolution}
\begin{tikzcd}
\Y \arrow[swap]{d}{q} \arrow{dr}{p} &  \\
X\times\pr^1 \arrow[dotted]{r}{} & \X
\end{tikzcd}
\end{equation} Denote by $\hat \X_0$ the strict transform  of $\X_0$ in $\Y$. Then by pulling back functions from $X$ to $\Y$, $\hat \X_0$ induces a divisorial valuation on $X$ which we denote $v_{\X_0}$ \cite[Section 4.2]{BHJ}. The induced valuation is independent of choice of resolution of indeterminacy. The filtration associated to $(\X,\L)$ can then be understood through the divisor $$p^*\L - q^*L = D.$$

\begin{lemma}\cite[Lemma 5.17]{BHJ} The filtration associated to $(\X,\L)$ can be described as $$\F^jV_k = \{ f \in V_k | v_{\X_0}(f) \geq -k\ord_{\hat\X_0}(D) +j\}.$$ Thus when $j \geq k\ord_{\hat\X_0}(D)$ we have $$\F^jV_k = H^0(X, kL-(k\ord_{\hat\X_0}(D) + j)v_{\X_0}),$$ with $\F^jV_k = V_k$ otherwise.
\end{lemma}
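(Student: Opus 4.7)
The plan is to reduce the Witt Nystr\"om description
\[ \F^j V_k = \{ s \in V_k : t^{-j} s \text{ extends to a regular section of } \L^k \text{ on } \X \} \]
to a single divisorial inequality at the generic point of $\X_0$, by exploiting normality of $\X$ together with the integrality hypothesis on the central fibre.

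First I would observe that $t^{-j} s$ is automatically regular on $\X \setminus \X_0$: under the canonical isomorphism $\X \setminus \X_0 \cong X \times \C^*$ the section $s$ is regular and $t$ is invertible. Since $\X$ is normal it is $S_2$, so a rational section of $\L^k$ is regular as soon as it is regular at every codimension-one point of $\X$. Integrality of $(\X,\L)$ means $\X_0$ is a prime divisor on $\X$, so there is exactly one codimension-one point $\xi$ at which regularity of $t^{-j}s$ can fail, namely the generic point of $\X_0$.

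Next I would transfer this single condition to $\Y$ via $p \colon \Y \to \X$. Since $p$ is birational and $\hat\X_0$ maps birationally onto $\X_0$, the DVRs $\mathcal{O}_{\Y, \hat\X_0}$ and $\mathcal{O}_{\X, \xi}$ coincide inside the common function field; pulling back functions from $X$ via $q$, the restriction of $\ord_{\hat\X_0}$ to $\C(X)$ is by definition $v_{\X_0}$. The relation $p^* \L \cong q^* L \otimes \mathcal{O}_\Y(D)$ lets us view $q^* s$ as a rational section of $p^* \L^k$, and the condition that $t^{-j} s$ be regular at $\xi$ translates into
\[ -j \, \ord_{\hat\X_0}(q^* t) + v_{\X_0}(s) + k \, \ord_{\hat\X_0}(D) \geq 0. \]
Now $\ord_{\hat\X_0}(q^* t) = 1$: since $\X_0$ is reduced, the Cartier divisor $\{t=0\} \subset \X$ cuts it out with multiplicity one, so $p^* \X_0 = \hat\X_0 + E$ with $E$ an effective $p$-exceptional divisor, and $q^* t = p^* \pi^* t$ vanishes to order one along $\hat\X_0$. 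Substituting gives the stated description of $\F^j V_k$; the second displayed equality is a direct rewriting of the resulting valuative inequality as the vector subspace of sections of $kL$ vanishing to prescribed order along $v_{\X_0}$.

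The only genuine subtlety is ensuring that $p$-exceptional divisors in $\Y_0$ do not impose further conditions. Checking regularity directly on $\Y$ would require an inequality at every prime divisor of $\Y_0$, several of which typically appear; it is the normality/$S_2$ reduction performed on $\X$ (rather than $\Y$) together with the integrality of $\X_0$ that collapses all such checks into the single $\hat\X_0$-condition. This is precisely why integrality of the central fibre, and not just the existence of a preferred component, is the essential hypothesis --- otherwise multiple codimension-one points of $\X$ would independently contribute and the filtration would not be governed by a single valuation.
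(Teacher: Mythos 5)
Your argument is correct. The paper does not actually prove this lemma: it cites the general result of Boucksom--Hisamoto--Jonsson (\cite[Lemma 5.17]{BHJ}), which describes the filtration of an arbitrary normal test configuration by one valuative inequality for \emph{each} irreducible component $E$ of $\X_0$, weighted by the multiplicity $b_E=\ord_E(\X_0)$, and then remarks that this collapses to a single condition when $\X_0$ is integral. Your direct derivation is essentially the specialization of their proof to that case, and the mechanism you isolate is the right one: normality of $\X$ reduces regularity of $t^{-j}s$ to a check at the codimension-one points of $\X$ lying over $0\in\C$, of which there is exactly one because $\X_0$ is \emph{irreducible}, while $\ord_{\hat\X_0}(q^*t)=1$ because $\X_0$ is \emph{reduced} --- so integrality enters twice, once through each half. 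What your version buys is a self-contained proof that never has to discuss the $p$-exceptional components of $\Y_0$; what the citation buys is the general statement, which is what one needs when the central fibre is not integral. One small point of bookkeeping: your computation yields $v_{\X_0}(f)\geq j-k\ord_{\hat\X_0}(D)$, which is consistent with the first display of the lemma, so the second display should read $H^0(X,kL-(j-k\ord_{\hat\X_0}(D))v_{\X_0})$; the sign printed in the statement is a typo in the paper, not a defect of your argument, but you should not describe the printed formula as a ``direct rewriting'' of your inequality without noting the discrepancy.
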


In fact Boucksom-Hisamoto-Jonsson's result is more general, but the resulting filtration simplifies when the central fibre $\X_0$ is integral. Note that $\X_0$ is dreamy, since the filtration $\F$ is finitely generated as it arises from a test configuration \cite[Proposition 2.15]{BHJ}. The maximal and minimal weights are thus the following.

\begin{corollary} We have \begin{align*}\lambda_{\max}&= \tau(v_{\X_0}) + \ord_{\hat \X_0}(D), \\ \lambda_{\min}&=\ord_{\hat \X_0}(D).\end{align*}\end{corollary}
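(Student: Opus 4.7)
The proof should be a direct computation from the Boucksom-Hisamoto-Jonsson filtration description just stated. My plan is as follows.

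First, for the minimal weight, I would observe that $v_{\X_0}$ is a divisorial valuation on $X$, so $v_{\X_0}(f) \geq 0$ for every nonzero section $f \in V_k$. Consequently the defining condition $v_{\X_0}(f) \geq -k\ord_{\hat\X_0}(D) + j$ is automatic exactly when $j \leq k\ord_{\hat\X_0}(D)$, and $\F^j V_k = V_k$ throughout this range, while for $j$ just above this threshold some section of $V_k$ (indeed a general one) violates the inequality. Hence $\lambda_{\min}^{(k)} = k\ord_{\hat\X_0}(D)$ up to bounded error from integer parts, and dividing by $k$ and taking $k \to \infty$ yields $\lambda_{\min} = \ord_{\hat\X_0}(D)$.

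Second, for the maximal weight, I would use the explicit description of $\F^j V_k$ as $H^0(X, kL - (j - k\ord_{\hat\X_0}(D))v_{\X_0})$ when $j \geq k\ord_{\hat\X_0}(D)$ (interpreting the sign conventions consistently with the preceding lemma). Setting $x = (j - k\ord_{\hat\X_0}(D))/k$, the non-vanishing of $\F^j V_k$ for $k$ large is governed by whether $\Vol(L - x v_{\X_0}) > 0$, i.e.\ by whether $x < \tau(v_{\X_0})$. Hence $\lambda_{\max}^{(k)}/k \to \tau(v_{\X_0}) + \ord_{\hat\X_0}(D)$, giving the second equality.

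The only point requiring a little care is the convergence argument matching the discrete quantity $\lambda_{\max}^{(k)}$, defined via the existence of an honest section, with $\tau(v_{\X_0})$, defined via the asymptotic volume. This is standard once one notes that the filtration $\F$ is finitely generated, as it arises from a test configuration by \cite[Proposition 2.15]{BHJ}; this implies that $v_{\X_0}$ is dreamy, so $\bigoplus_{j,k} H^0(X, kL - jv_{\X_0})$ is a finitely generated bigraded algebra and sections appear at the expected rates. This is the main (mild) obstacle; the remainder of the proof is just reading off the limits from the BHJ filtration formula.
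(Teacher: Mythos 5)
Your argument is correct and is exactly the computation the paper leaves implicit: the corollary is stated with no proof as an immediate consequence of the Boucksom--Hisamoto--Jonsson filtration formula, with the dreaminess of $v_{\X_0}$ (finite generation of $\F$) justifying the identification of the limiting slope with $\tau(v_{\X_0})$, just as you note. Your reading of the sign in the displayed formula (i.e.\ $\F^jV_k = H^0(X,kL-(j-k\ord_{\hat\X_0}(D))v_{\X_0})$ for $j\geq k\ord_{\hat\X_0}(D)$) is the consistent one.
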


Using this, we can relate the Donaldson-Futaki invariant and the norm.

\begin{proposition}\label{prop:converse} The Donaldson-Futaki and $\beta$-invariants agree u to a constant: $$2(n-1)!\DF(\X,\L) = \beta(v_{\X_0}).$$
\end{proposition}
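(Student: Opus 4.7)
The plan is to mirror the computation of Proposition~\ref{prop:relation-between-DF}, with $v_{\X_0}$ playing the role of $F$. The central input is the filtration description just stated: after the substitution $j\mapsto j'+k\,\ord_{\hat\X_0}(D)$, the non-trivial pieces of $\F$ read
\[
\F^{j'+k\,\ord_{\hat\X_0}(D)} V_k = H^0(X,kL - j'\,v_{\X_0}), \qquad 0 \leq j' \leq k\tau(v_{\X_0}),
\]
which is, up to the additive shift in weights, the same filtration that was built from the dreamy valuation $v_{\X_0}$ in Section~3.1. Note that $v_{\X_0}$ is automatically dreamy here, since the filtration $\F$ is finitely generated as it arises from a test configuration, and this is precisely what one needs to feed it into the MMP machinery of Section~\ref{sec:MMP}.

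With this in hand, I would plug the filtration into the weight polynomial formula of Theorem~\ref{wittnystrom}, yielding
\[
w(k) = f(k) + \lambda_{\min}^{(k)} \dim V_k,
\]
where $f(k) = \sum_{j'=0}^{k\tau(v_{\X_0})} \dim H^0(X,kL - j'\,v_{\X_0})$ is precisely the auxiliary polynomial of \eqref{eqn:f} associated to $v_{\X_0}$. Its leading coefficients $f_{n+1}$ and $f_n$ are therefore computed by Proposition~\ref{vol-1}. Next I would observe that $\DF(\X,\L)$ depends only on $f_{n+1}$ and $f_n$: writing $b_0 = f_{n+1}+\lambda_{\min}a_0$ and $b_1 = f_n+\lambda_{\min}a_1$, the $\lambda_{\min}$ contributions cancel in $b_0a_1 - b_1a_0$. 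Hence $\DF(\X,\L)$ is given by exactly the same combination of $f_{n+1}$ and $f_n$ as in Corollary~\ref{cor:testconfig-invariants}. From there the proof of Proposition~\ref{prop:relation-between-DF} — combining Proposition~\ref{vol-1}, Lemma~\ref{discrepancy} and Lemma~\ref{vol-2} — applies verbatim, and gives $L^n\DF(\X,\L) = \beta(v_{\X_0})$.

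The main obstacle I anticipate is the bookkeeping needed to make the cancellation of $\lambda_{\min}$ rigorous. Conceptually this reflects the well-known invariance of $\DF$ under twisting $\L$ by pullbacks of line bundles from $\pr^1$, but one needs to be a little careful because in general $\ord_{\hat\X_0}(D)$ is only rational, so $\lambda_{\min}^{(k)}$ equals $k\,\ord_{\hat\X_0}(D)$ only asymptotically, and the integer sums defining $w(k)$ and $f(k)$ must be handled in their $k\to\infty$ expansions. Once this is done, the remainder of the argument is essentially a transcription of the forward direction, and no new geometric input is required beyond what has already been established in Sections~3.1 and~\ref{sec:MMP}.
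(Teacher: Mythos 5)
Your proposal is correct and follows essentially the same route as the paper: both reduce to the filtration description of $(\X,\L)$, absorb the shift by $\ord_{\hat\X_0}(D)$ using the invariance of $\DF$ under adding a multiple of $k\,h(k)$ to the weight polynomial (your explicit cancellation of $\lambda_{\min}$ in $b_0a_1-b_1a_0$ is exactly this), and then identify the remaining sum with the polynomial $f(k)$ of Section~\ref{sec:MMP} so that Proposition~\ref{prop:relation-between-DF} applies. Your remark that $v_{\X_0}$ is dreamy because the filtration is finitely generated is also the argument the paper uses.
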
 

\begin{proof}

From the definition of the filtration and a change of variables in the integral, we have $$w(k) = \int_{0}^{\tau(v_{\X_0})} h^0(X, kL-xv_{\X_0})dx + k\ord_{\X_0}(D)h(k).$$ Adding a constant multiple of $kh(k)$ to the weight polynomial, which geometrically corresponds to adding a constant to the weight polynomial, leaves the Donaldson-Futaki invariant unaffected, so we may disregard the term $k\ord_{\X_0}(D)h(k)$. Since $v_{\X_0}$ is a dreamy prime divisor, we may apply the arguments of Section \ref{sec:MMP} to understand the integral $$f(k) = \int_{0}^{\tau(v_{\X_0})} h^0(X, kL-xv_{\X_0})dx.$$ Indeed, this is precisely the polynomial considered in Section \ref{sec:MMP}, showing by Proposition \ref{prop:relation-between-DF} that $2(n-1)!\DF(\X,\L) = \beta(v_{\X_0}).$ \end{proof}

A similar calculation applies to the norm, the details are the same as Lemma \ref{lemma:norm} and are thus left to the interested reader.

\begin{lemma}\label{lem:converse} We have $$\Vol(L)\JNA^{\NA}(\X,\L) = j(v_{\X_0}).$$ \end{lemma}

\begin{remark} In the case that the central fibre $\X_0$ is actually smooth or has orbifold singularities, the Donaldson-Futaki invariant of $(\X,\L)$ agrees with the classical Futaki invariant  of the induced holomorphic vector field on $\X_0$ by a result of Donaldson \cite[Proposition 2.2.2]{donaldson-toric}. Thus in this situation, the beta invariant of the induced divisorial valuation $\beta(v_{\X_0})$ also agrees with the classical Futaki invariant, since by our results $2(n-1)!\DF(\X,\L) = \beta(v_{\X_0}).$\end{remark}

\subsection{Equivariant K-polystability}

We now turn to the equivariant setting. The key notion is the slightly non-geometric notion of a \emph{product type} dreamy prime divisor, based on the definition due to Fujita in the Fano setting \cite[Definition 3.9]{fujita-hyperplane}. From Lemma \ref{associated-test-config} we obtain a test configuration, which we denote  $(\X_F,\L_F)$.

\begin{definition} We say that a dreamy prime divisor $F$ over $(X,L)$ is of \emph{product type} if its associated test configuration $(\X_F,\L_F)$ is a product test configuration. \end{definition}

The reason we have postponed this definition until the present section is that the definition relies on the correspondence between integral test configurations and dreamy prime divisors. What we have proven thusfar immediately produces:

\begin{corollary} A polarised variety $(X,L)$ is integrally K-polystable if and only $\beta(F)\geq 0$ for all dreamy prime divisors $F$ over $(X,L)$, with equality if and only if $F$ is of product type. \end{corollary}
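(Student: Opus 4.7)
The plan is to deduce the corollary from the correspondence between integral test configurations and dreamy prime divisors developed throughout Section \ref{sec:proofs}, together with the identification of the Donaldson--Futaki and $\beta$-invariants under this correspondence. Since the semistability equivalences have already been established, the only new content is the characterisation of the equality case.

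First I would handle the easy direction: assuming $(X,L)$ is integrally K-polystable, for any non-trivial dreamy prime divisor $F$ I use Lemma \ref{associated-test-config} to form the integral test configuration $(\X_F,\L_F)$ and then Proposition \ref{prop:relation-between-DF} to obtain $L^n\DF(\X_F,\L_F) = \beta(F) \geq 0$. If $\beta(F) = 0$ then $\DF(\X_F,\L_F) = 0$, which by integral K-polystability forces $(\X_F,\L_F)$ to be a product test configuration; by definition this means that $F$ is of product type.

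Next I would tackle the converse. Given an integral test configuration $(\X,\L)$, I form its associated dreamy divisorial valuation $v_{\X_0}$ via the construction from the previous subsection and invoke the identity $L^n\DF(\X,\L) = \beta(v_{\X_0})$ proved there. The hypothesis then gives $\DF(\X,\L) \geq 0$; if equality holds, then $\beta(v_{\X_0}) = 0$, so by hypothesis $v_{\X_0}$ is of product type, meaning $(\X_{v_{\X_0}}, \L_{v_{\X_0}})$ is a product test configuration.

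The hard part will be passing from this last statement to the conclusion that $(\X,\L)$ itself is a product test configuration; this requires the correspondence between integral test configurations and dreamy divisorial valuations to be essentially a bijection. I would verify this by comparing filtrations: the filtration of $(\X,\L)$ computed above agrees with that defining $(\X_{v_{\X_0}}, \L_{v_{\X_0}})$ up to a shift by $k\,\ord_{\hat\X_0}(D)$ in the weights. Such a shift corresponds only to twisting $\L$ by a multiple of the central fibre $\X_0$, which preserves the total space $\X$ and, in particular, preserves the property of being a product test configuration. Hence $(\X,\L)$ is itself a product, completing the equivalence.
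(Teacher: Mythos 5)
Your proposal is correct and follows exactly the route the paper intends: the corollary is stated as an immediate consequence of the two directions of the correspondence (Lemma \ref{associated-test-config} with Proposition \ref{prop:relation-between-DF} for one direction, and the identity $L^n\DF(\X,\L)=\beta(v_{\X_0})$ for the other), which is precisely how you argue. The only detail the paper leaves implicit --- that the filtration of $(\X,\L)$ and the filtration defining $(\X_{v_{\X_0}},\L_{v_{\X_0}})$ differ by the shift $k\ord_{\hat\X_0}(D)$, which only twists $\L$ by a multiple of the (principal) central fibre and hence preserves the product property --- is the part you correctly identify and fill in.
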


We finally turn to $G$-equivariant K-polystability, with $G \subset \Aut_0(X,L)$ a connected algebraic group. We say that a dreamy prime divisor $F \subset Y$ is $G$\emph{-invariant} if there is a $G$ action on $Y$, making the map $Y \to X$ a $G$-invariant map, such that $F$ is itself a $G$-invariant divisor on $Y$ (by which we mean $G$ sends $F$ to itself rather than $F$ being contained in the fixed point locus of $G$). The following is a variant of work of Golota and Zhu \cite{golota, zhu}.

\begin{theorem} A polarised variety $(X,L)$ is $G$-equivariantly integrally K-polystable if and only if $\beta(F)\geq 0$ for all $G$-invariant dreamy prime divisors $F$ over $(X,L)$, with equality if and only if $F$ is of product type. \end{theorem}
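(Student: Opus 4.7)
The strategy is to reduce the $G$-equivariant statement to the non-equivariant integral K-polystability criterion already established in the preceding subsections. Since Proposition \ref{prop:relation-between-DF} together with its converse provide a correspondence between integral test configurations and dreamy prime divisors over $X$ under which $L^n\DF(\X,\L) = \beta(v_{\X_0})$ and product test configurations correspond to product-type dreamy divisors, it will suffice to show that this correspondence restricts to a bijection between $G$-equivariant integral test configurations and $G$-invariant dreamy prime divisors.

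For the forward direction, let $F$ be a $G$-invariant dreamy prime divisor. After replacing $L$ by a sufficiently divisible multiple, which is harmless by the scaling invariance of both $\beta$ and of the K-polystability condition, we may assume $L$ carries a $G$-linearisation, so that the induced $G$-representation on each $V_k = H^0(X, kL)$ is well defined. Since $F$ is $G$-stable, this representation preserves each subspace $\F^j V_k = H^0(X, kL - \lfloor j \rfloor F)$. The bigraded algebra $\bigoplus_{k,j} t^{-j}\F^j V_k$ used to construct $(\X_F,\L_F)$ in Lemma \ref{associated-test-config} therefore acquires a $G$-action commuting with the $\C^*$-action coming from the $t$-grading, rendering $(\X_F, \L_F)$ a $G$-equivariant test configuration.

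For the converse, let $(\X, \L)$ be a $G$-equivariant integral test configuration. By $G$-equivariant resolution of singularities, which is available in characteristic zero, we may take the resolution $\Y$ of the indeterminacy locus of $X \times \pr^1 \dashrightarrow \X$ in diagram \eqref{resolution} to be $G$-equivariant. The strict transform $\hat{\X}_0$ is then a $G$-invariant prime divisor on $\Y$, so the induced divisorial valuation $v_{\X_0}$ on $X$ is $G$-invariant. A further equivariant resolution of a birational model realising this valuation exhibits $v_{\X_0}$ as $\ord_F$ for a $G$-invariant prime divisor $F$ over $X$, and $F$ is dreamy because the filtration associated to a test configuration is finitely generated by \cite[Proposition 2.15]{BHJ}.

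The main obstacle is the careful bookkeeping required to propagate $G$-equivariance through the $\Proj$ construction, the resolutions of indeterminacy, and the identification of the divisorial valuation with a prime divisor over $X$; in particular, the $G$-linearisation on a power of $L$ and the equivariant resolution of singularities must be invoked simultaneously. Once these are arranged, the identity $L^n\DF(\X,\L) = \beta(v_{\X_0})$ from the non-equivariant case transports without change, and the equality case is handled by the tautological observation that $(\X,\L)$ is a product test configuration precisely when the corresponding $G$-invariant dreamy divisor $F$ is of product type, giving the theorem.
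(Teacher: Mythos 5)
Your proposal is correct and follows essentially the same route as the paper: both directions reduce to the non-equivariant correspondence $L^n\DF(\X,\L)=\beta(v_{\X_0})$, with the forward direction handled by the natural $G$-action on the bigraded algebra $\bigoplus_{k,j}t^{-j}\F^jV_k$ defining $(\X_F,\L_F)$ (as in Zhu's argument in the Fano case) and the converse by a $G$-equivariant resolution of indeterminacy realising $\hat\X_0$ as a $G$-invariant divisor (as in Golota's argument). Your extra care about linearising a power of $L$ is a reasonable elaboration the paper leaves implicit.
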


\begin{proof} The claim follows from two facts. The first is that the valuation $v_{\X_0}$ associated to a $G$-equivariant integral test configuration $(\X,\L)$ is a $G$-invariant valuation. This follows directly from its construction. Indeed, taking a $G$-equivariant resolution of indeterminacy \begin{equation*} \begin{tikzcd}
\Y \arrow[swap]{d}{} \arrow{dr}{} &  \\
X\times\pr^1 \arrow[dotted]{r}{} & \X, \end{tikzcd}\end{equation*} one realises the proper transform $\hat \X_0 \subset \Y_0$ as a $G$-invariant divisor of $\Y$, implying that $v_{\X_0}$ is a $G$-invariant divisorial valuation on $X$, exactly as in Golota's proof in the Fano case \cite[Proposition 3.13]{golota}. The second is that the  integral test configuration associated with a $G$-invariant dreamy prime divisor is a $G$-equivariant test configuration, which, as noted by Zhu in the Fano setting \cite[Theorem 3.5]{zhu}, follows immediately from its definition as $$\X = \Proj_{\C} \bigoplus_{k \in \Z_{\geq 0}} \left( \bigoplus_{j \in \Z} t^{-j} \F^j V_k\right),$$ with the $G$-action induced from the natural action on $$\bigoplus_{k \in \Z_{\geq 0}} \bigoplus_{j \in \Z} t^{-j} \F^j V_k.$$\end{proof}

\section{Examples and properties}\label{sec:examples}

\subsection{Calabi-Yau and canonically polarised varieties}

It is a well-known result of Odaka that Calabi-Yau varieties and canonically polarised varieties are K-stable \cite{odaka-calabi}, and one can even show that they are uniformly K-stable \cite{twisted, BHJ}. It follows from Theorem \ref{thm:intromainthm} that they are, therefore, also valuatively stable. Nevertheless, it seems worth providing a direct proof as a demonstration of how to understand valuative stability. We will use the $\delta$-invariant $\delta(L)$, which is defined as \cite{fujita-odaka,blum-jonsson} $$\delta(L) = \inf_F \frac{A_X(F)\Vol(L)}{\int_0^{\infty} \Vol(L-xF) dx}.$$

\begin{theorem}\label{thm:CY} Let $(X,L)$ be a polarised variety and suppose that either

\begin{enumerate}[(i)]
\item $X$ has log terminal singularities $K_X=0$; or
\item $X$ has log canonical singularities and $L=K_X$.
\end{enumerate} Then $(X,L)$ is uniformly valuatively stable.
\end{theorem}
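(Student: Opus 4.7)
The approach is to exploit the alternative form of $\beta(F)$ derived in Corollary~\ref{cor:fuj},
\[
\beta(F) = A_X(F)\Vol(L) - \mu\int_0^{\tau(F)}\Vol(L-xF)\,dx + \int_0^{\tau(F)}\Vol'(L-xF).(\mu L + K_X)\,dx,
\]
which collapses drastically under either hypothesis. In case (i), $K_X \equiv 0$ forces $\mu(X,L)=0$ and $\mu L + K_X \equiv 0$, so both integrals vanish and $\beta(F) = A_X(F)\Vol(L)$. In case (ii), $L = K_X$ gives $\mu = -1$ and $\mu L + K_X = 0$, so the final integral vanishes and the middle one simplifies to $S(F)\Vol(L)$, yielding $\beta(F) = (A_X(F) + S(F))\Vol(L)$.

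Case (ii) is then essentially immediate: log canonicity gives $A_X(F) \geq 0$, hence $\beta(F) \geq S(F)\Vol(L)$. The bound $S(F) \geq \tfrac{1}{n+1}\tau(F)$ combined with $j(F) \leq \tfrac{n}{n+1}\tau(F)\Vol(L)$, both from Proposition~\ref{prop:lipshitz-norms}, yields $S(F)\Vol(L) \geq \tfrac{1}{n} j(F)$, so $\beta(F) \geq \tfrac{1}{n} j(F)$ and we are done with $\epsilon = 1/n$. For case (i), log terminality gives $A_X(F) > 0$, and to promote this to a uniform comparison I would invoke positivity of the $\delta$-invariant: since $X$ is klt and $L$ is ample, $\alpha(L) > 0$ (a standard consequence of positivity of log canonical thresholds along linear systems for klt pairs), and by Fujita--Odaka one has $\delta(L) \geq \tfrac{n+1}{n}\alpha(L) > 0$. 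Then $A_X(F) \geq \delta(L) S(F)$ for every prime divisor $F$ over $X$, and the same Lipschitz bound gives $\beta(F) \geq \delta(L) S(F)\Vol(L) \geq \tfrac{\delta(L)}{n} j(F)$.

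The only non-formal ingredient is the positivity $\delta(L) > 0$ used in case (i); this is where singularity theory enters, but it is well-known for klt pairs with an ample polarisation, and one could equally well argue directly via the alpha invariant bound $A_X(F) \geq \alpha(L)\tau(F)$. Once this is in hand, both cases are one-line applications of Proposition~\ref{prop:lipshitz-norms}, and no further input from the MMP or test-configuration machinery of Section~\ref{sec:proofs} is required.
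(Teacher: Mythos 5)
Your proof is correct and follows essentially the same route as the paper: both arguments reduce $\beta(F)$ to $A_X(F)\Vol(L)$ in case (i) and to $(A_X(F)+S(F))\Vol(L)$ in case (ii), then conclude from $A_X(F)\ge 0$, the positivity of $\delta(L)$ for klt $X$, and the Lipschitz equivalences of Proposition~\ref{prop:lipshitz-norms}. The only cosmetic difference is that you justify $\delta(L)>0$ via the alpha invariant and the Fujita--Odaka/Blum--Jonsson comparison, whereas the paper cites Blum--Jonsson's Theorem~A directly; these are the same input.
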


\begin{proof}
$(i)$ In this case, the invariant of interest simplifies to $$\beta_L(F) = A_X(F) L^n,$$ which is clearly non-negative. To show strict positivity, we use a result of Blum-Jonsson which implies that, since $X$ has log terminal singularities, $\delta(L)>0$ \cite[Theorem A]{blum-jonsson} and hence $$\beta(F) \geq \delta(L) S(F),$$ proving the result by Proposition \ref{prop:lipshitz-norms}.

$(ii)$ Using $\mu(X,K_X)=-1$, one calculates $$\beta_{K_X}(F) = A_X(F) (K_X)^n + \int_0^{\tau(F)} \Vol(K_X-xF)dx.$$ Since $X$ has log canonical singularities, $ A_X(F)  \geq 0$, and the result follows again by Lipschitz equivalence of $j(F)$ and $\int_0^{\tau(F)} \Vol(K_X-xF)dx$ proved in Proposition \ref{prop:lipshitz-norms}.
\end{proof}

We next turn to a sufficient criterion involving $\delta = \delta(L)$. 

\begin{theorem}\label{thm:bodydelta} Write $\delta(L) - \mu(L)=(n+1)\gamma(L)$, and suppose $$(\mu(L) + \gamma(L)) L + K_X \textrm{ is effective}.$$ Then $(X,L)$ is uniformly valuatively stable.
\end{theorem}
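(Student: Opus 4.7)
The plan is to combine the alternative formula for $\beta(F)$ from Corollary~\ref{cor:fuj} with the effectivity hypothesis and the Blum--Jonsson characterisation $\delta(L)=\inf_F A_X(F)/S(F)$. Set $D := (\mu+\gamma)L + K_X$, which is effective by hypothesis, and record the trivial algebraic identity $\mu L + K_X = -\gamma L + D$. The point is that the ``bad'' piece $-\gamma L$ is precisely what is needed to absorb $(n+1)\gamma$ into $\mu$ and reconstruct $\delta$, while the effective remainder $D$ can only help.

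Substituting the decomposition into Corollary~\ref{cor:fuj} gives
$$\beta(F) = A_X(F)\Vol(L) - \mu L^n S(F) - \gamma\int_0^{\tau(F)}\Vol'(L-xF).L\,dx + \int_0^{\tau(F)}\Vol'(L-xF).D\,dx.$$
The integration-by-parts identity $\int_0^{\tau(F)}\Vol'(L-xF).L\,dx = (n+1)L^n S(F)$ (the same computation used in the proof of Corollary~\ref{cor:fuj}), together with the numerical relation $\mu + (n+1)\gamma = \delta(L)$ built into the definition of $\gamma$, collapses this to
$$\beta(F) = L^n\bigl(A_X(F) - \delta(L)\,S(F)\bigr) + \int_0^{\tau(F)}\Vol'(L-xF).D\,dx.$$
Both terms on the right are non-negative: the first by the Blum--Jonsson formula for $\delta(L)$, and the second because $D$ is effective and the volume is non-decreasing along effective directions on the big cone (so its derivative in such a direction is non-negative). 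In particular one obtains $\beta(F) \geq 0$.

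To upgrade this to uniform valuative stability $\beta(F) \geq \epsilon\,j(F)$, one combines the Lipschitz equivalence of $S(F)$ and $j(F)$ from Proposition~\ref{prop:lipshitz-norms} with a quantitative lower bound on the extra integral $\int_0^{\tau(F)}\Vol'(L-xF).D\,dx$ of the form $c\cdot L^n S(F)$. In the Fano setting $L=-K_X$, $\mu=1$, this is immediate since $D=\gamma L$, and the integral then equals exactly $\gamma(n+1)L^n S(F)$, which recovers Fujita--Odaka. The main obstacle in general is that $D$ is only effective, not ample, so obtaining a uniform positive lower bound on the ample-model intersection numbers $(L_j - xF_j)^{n-1}.(\phi_j)_* \pi^*D$ across all valuations $F$ is nontrivial. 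The expected route is to run the MMP of Section~\ref{sec:MMP}, observe that $(\phi_j)_*\pi^*D$ is a non-zero effective divisor on each $Y_j$ with $D.L^{n-1} = \gamma L^n > 0$, and apply a Khovanskii--Teissier-type inequality on each ample model to compare its intersection with $(L_j - xF_j)^{n-1}$ to that of $L_j$.
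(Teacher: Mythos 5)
Your identity
$$\beta(F) = L^n\bigl(A_X(F) - \delta(L)\,S(F)\bigr) + \int_0^{\tau(F)}\Vol'(L-xF).D\,dx, \qquad D := (\mu+\gamma)L+K_X,$$
is correct and immediately gives $\beta(F)\geq 0$; up to this point you are running exactly the paper's computation, merely packaged as an identity rather than an inequality. The genuine gap is the uniform statement, and the route you sketch for closing it will not work. Having spent the entire coefficient $(n+1)\gamma$ on reconstructing $\delta(L)$, the first term is only non-negative --- by the very definition of $\delta(L)$ as an infimum there is no $\epsilon'>0$ with $A_X(F)\geq(\delta(L)+\epsilon')S(F)$ for all $F$ --- so all of the uniformity would have to come from the $D$-term. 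But $D$ is only assumed effective, not big, so there is no uniform lower bound of the form $\int_0^{\tau(F)}\Vol'(L-xF).D\,dx\geq c\,L^nS(F)$: a Khovanskii--Teissier inequality on the ample models compares $(L_j-xF_j)^{n-1}.D_j$ with $\bigl((L_j-xF_j)^n\bigr)^{(n-1)/n}\bigl(D_j^n\bigr)^{1/n}$, which is vacuous when $D$ is not big, and in general the infimum over $F$ of $\int_0^{\tau(F)}\Vol'(L-xF).D\,dx / S(F)$ can be zero.

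The paper's fix is to \emph{not} spend all of $\gamma$: write $(n+1)\gamma=(n+1)\gamma'+\epsilon$ with $\epsilon>0$ chosen so that $D':=(\mu+\gamma')L+K_X$ remains effective (the paper invokes openness for this), and combine only $\gamma'$ with $K_X$. The same manipulation then yields
$$\beta(F)\ \geq\ \epsilon\int_0^{\tau(F)}\Vol(L-xF)\,dx \;+\; \int_0^{\tau(F)}\Vol'(L-xF).D'\,dx\ \geq\ \epsilon\, L^n S(F),$$
the second integral being non-negative by \cite[Corollary C]{BFJ}, and uniform valuative stability follows from the Lipschitz equivalence of $S(F)$ and $j(F)$ in Proposition \ref{prop:lipshitz-norms}. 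In other words, the source of the uniform positivity is a sacrificed sliver of the $\delta$-term, not the effective direction $D$; your observation that in the Fano case $D=\gamma L$ makes the $D$-term itself uniformly positive is correct, but that is precisely the special feature of $L=-K_X$ that fails in general. (What is really needed of $D'$ is pseudoeffectivity, so if you want to be careful about the perturbation you should note that shrinking $\gamma$ keeps $(\mu+\gamma')L+K_X$ pseudoeffective whenever $(\mu+\gamma)L+K_X$ is big; this is the honest content of the openness claim.)
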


\begin{proof}
We use the formulation of Corollary \ref{cor:fuj}, which demonstrates that $$\beta(F) =  A_X(F)\Vol(L) -\mu\int_0^{\infty}\Vol(L-xF)dx + \int_0^{\infty}\Vol'(L-xF)\cdot (\mu L +K_X)dx.$$ We write $(n+1)\gamma = (n+1)\gamma' + \epsilon$, with $\epsilon>0$ chosen so that $(\mu + \gamma') L + K_X \textrm{ is effective};$ such a choice exists since effectivity is an open condition in the N\'eron-Severi group.

Since by definition of $\delta $ there is a lower bound $$A_X(F)\Vol(L) \geq \delta\int_0^{\infty}\Vol(L-xF)dx,$$ the $\beta$-invariant has a lower bound of the form \begin{align*}\beta(F) &\geq (\delta-\mu)\int_0^{\infty}\Vol(L-xF)dx + \int_0^{\infty}\Vol'(L-xF)\cdot (\mu L +K_X)dx, \\ &=  (n+1)\gamma\int_0^{\infty}\Vol(L-xF)dx + \int_0^{\infty}\Vol'(L-xF)\cdot (\mu L +K_X)dx.\end{align*} Note from integration by parts as in the proof of Corollary \ref{cor:fuj} that $$\int_0^{\infty}\Vol(L-xF)dx = (n+1) \int_0^{\infty}\Vol'(L-xF)\cdot Ldx.$$ Thus $$\beta(F) \geq \epsilon\int_0^{\infty}\Vol(L-xF)dx  +   \int_0^{\infty}\Vol'(L-xF)\cdot ((\mu + \gamma') L +K_X)dx.$$ The proof is concluded by noting that since $(\mu + \gamma) L + K_X$ is assumed effective, the derivative of the volume in this direction is non-negative \cite[Corollary C]{BFJ}, with the term $\epsilon\int_0^{\infty}\Vol(L-xF)dx$ providing uniform valuative stability by Proposition \ref{prop:lipshitz-norms}.
\end{proof}

The same proof shows that provided $(\mu + \gamma) L + K_X$ is nef, $(X,L)$ is valuatively semistable. We remark that in the case $L=-K_X$, this result recovers Fujita-Odaka's result that if $\delta(-K_X) > 1$, then $(X,-K_X)$ is uniformly valutively stable (hence uniformly K-stable by the  work of Fujita and Li  \cite{fujita-valuative, li}). Thus Theorem \ref{thm:bodydelta} can be through of as a generalisation of Fujita-Odaka's work to more general polarised varieties.

\begin{remark} It is interesting to note that the dreaminess hypothesis is irrelevant in all of our sufficient criteria for valuative stability. \end{remark}

\subsection{Valuatively unstable varieties}

Since valuative stability implies the classical Futaki invariant vanishes, one obtains many examples of valuatively unstable varieties. It seems worth providing one calculation of this fact directly. The example we choose is the blow up of $\pr^2$ at a point, which is K-unstable with respect to any polarisation. We show that it is even valuatively unstable.

\begin{proposition} $\Bl_{p}\pr^2$ is valuatively unstable with respect to any polarisation. \end{proposition}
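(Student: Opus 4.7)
The plan is to take $F=E$, the exceptional divisor of the blowup itself, viewed as a prime divisor on $X=\Bl_p\pr^2$, and show directly that $\beta(E)<0$ for every ample class. Since $X=\mathbb{F}_1$ is toric, any prime toric divisor on $X$ is dreamy by the Fano type example of the preliminaries, so $E$ is a dreamy prime divisor over $X$ and obstructs valuative stability.

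First I would set up intersection-theoretic notation: let $H$ be the pullback of the hyperplane class from $\pr^2$, so $H^2=1$, $H\cdot E=0$, $E^2=-1$, $K_X=-3H+E$, and write an arbitrary polarisation as $L=aH-bE$ with $a>b>0$ (the ample cone of $\mathbb{F}_1$). Then $L^2=a^2-b^2$ and $-K_X\cdot L=3a-b$, so $\mu(X,L)=(3a-b)/(a^2-b^2)$. Since $E$ is a divisor on $X$ itself, $A_X(E)=1$.

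Next, I would compute the volume function of $L-xE=aH-(b+x)E$. Using that the nef cone of $\mathbb{F}_1$ is generated by $H$ and $H-E$, this class is nef precisely for $0\le x\le a-b$ and ceases to be pseudoeffective beyond, so $\tau_L(E)=a-b$ and $\Vol(L-xE)=a^2-(b+x)^2$ on the whole range. A short integration gives
\[\int_0^{\tau(E)}\Vol(L-xE)\,dx=\tfrac{1}{3}(a-b)^2(2a+b).\]
Since $L-xE$ is nef on the interior of the range, the derivative of the volume is simply intersection, $\Vol'(L-xE).K_X=2(L-xE)\cdot K_X=2(-3a+b+x)$, and integrating from $0$ to $a-b$ yields $(a-b)(b-5a)$.

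Assembling the three terms in
\[\beta(E)=A_X(E)\Vol(L)+2\mu\!\int_0^{\tau(E)}\!\Vol(L-xE)\,dx+\int_0^{\tau(E)}\!\Vol'(L-xE).K_X\,dx,\]
factoring out $(a-b)/[3(a+b)]$ and simplifying (the $a^2$-coefficient cancels, leaving $-4ab+4b^2$) should produce the clean expression
\[\beta(E)=-\frac{4b(a-b)^2}{3(a+b)},\]
which is strictly negative for every polarisation $L$ in the ample cone. This establishes valuative instability. The only genuine step requiring care is the algebraic simplification at the end, and the main conceptual point is just the observation that on $\mathbb{F}_1$ the class $L-xE$ stays nef throughout its pseudoeffective range, so no MMP or zeroth Zariski decomposition is needed to evaluate the volume and its derivative.
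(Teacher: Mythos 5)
Your proposal is correct and follows essentially the same route as the paper: destabilise with the exceptional divisor $E$ (dreamy since $X$ is toric/Fano), use that $L-xE$ stays nef throughout $[0,a-b]$ so the volume is the self-intersection and its derivative is twice the intersection with $K_X$, and simplify to $\beta(E)=-\tfrac{4b(a-b)^2}{3(a+b)}<0$; I have checked the integrals and the final algebra, and your closed form agrees with the paper's normalised expression (e.g.\ it gives $-4/3$ for $L=-K_X$). The only cosmetic difference is that the paper normalises $a=3$ by homogeneity rather than carrying the full two-parameter simplification.
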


\begin{proof} We show that the exceptional divisor destabilises. Note that as $X$ is Fano, the exceptional divisor is dreamy with respect to any polarisation by Example \ref{dreamy-fano}.

Let $H$ be the pullback of the hyperplane class on $\pr^2$ to $\Bl_p\pr^2$, and let $E$ be the exceptional divisor. The ample divisors are of the form $$xH-yE, \qquad x > y \geq 0;$$ this line bundle is nef when $x=y$. The big divisors are of the form $$xH-yE, \qquad  x \geq 0.$$ 

By \cite[Example 2.2.46]{Lazarsfeld1}, the volume is given by \begin{align*} \Vol(xH-yE) &= x^2-y^2, \qquad \text{ when }  x > y \geq 0, \\ &= x^2, \text{ when } \qquad x \geq 0, y \leq  0.\end{align*}  Thus the pseudoeffective threshold is  $$\tau_{xH-yE}(E) = x-y.$$ 

A calculation then gives that $$\beta_{xH-yE}(E) = x^2-y^2+\frac{6x-2y}{x^2-y^2}\int_0^{x-y}(x^2-y^2-2yz-z^2)dz + \int_0^{x-y}(-6x+2y+2z)dz.$$ By homogeneity, in considering the sign of this invariant, we may assume that $x=3$. Then we have $$\beta_{3H-yE}(E)= -\frac{4 (y - 3)^2 y}{3 (y + 3)},$$ which is negative for $0<y<3$. Note that when $y=1$, the polarisation is given by the anticanonical class, $\beta_{3H-E}(E) = -4/3$ and one can check this agrees with the calculation of Fujita's $\beta$-invariant.

\end{proof}

\begin{example}

It is in general not difficult to produce valuatively unstable varieties with discrete automorphism group. For example, let $E$ be a simple unstable vector bundle over a polarised Riemann surface $(B,L)$ of genus at least one. Then we claim $(\pr(E),kL+\scO_{\pr(E)}(1))$ is valuatively unstable for all $k \gg 0$. Indeed, any subbundle $F \subset E$ induces a test configuration $$(\pr(\E), \scO_{\pr(\E)}(1))\to B \times \C$$ with  $\E$ a bundle over $B \times \C$ which satisfies $$\E_0 \cong F \oplus E/F.$$ Thus $\pr(\E)_0$ is smooth, hence integral. Since $B$ has dimension one and $E$ is by hypothesis unstable, there exists a destabilising subbundle $F \subset E$. It then follows from a result Ross-Thomas that the Donaldson-Futaki invariant of $(\pr(E),kL+\scO_{\pr(E)}(1))$ is strictly negative for $k\gg 0$  \cite[Section 5.4]{ross-thomas}, and hence by Theorem \ref{thm:intromainthm} is also valuatively unstable. The associated divisorial valuation is induced by $\pr(\E)_0$ through the constructions. 

\end{example}

\subsection{Bounds on the alpha invariant}

Recall that the alpha invariant of $(X,L)$ is defined as $$\alpha(X,L) = \inf_{D \in |kL|} \lct\left(X,\frac{1}{k}D\right),$$ where $$ \lct(X,D) = \sup\{ t\in \R_{>0}\ |  \ (X,tD) \text{ is log canonical}\}.$$ Fujita-Odaka have shown that K-semistable Fano varieties have alpha invariant bounded below by $\frac{1}{n+1}$ \cite{fujita-odaka}. The analogue for for general K-semistable varieties is the following. Let us say that $(X,L)$ is \emph{strongly valuatively semistable} if $\beta(F)\geq 0$ for all prime divisors over $X$, not necessarily dreamy.

\begin{theorem}\label{thm:alphabounds}
Suppose $(X,L)$ is a strongly valuatively semistable. Then $$\alpha(X,L) \geq \frac{\mu(X,L)}{n+1}.$$

\end{theorem}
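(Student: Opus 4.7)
The plan is to adapt the Fujita--Odaka strategy from the Fano case to the general polarised setting, the main novelty being the need to control the derivative-of-volume term appearing in our $\beta$-invariant.

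I would argue by contradiction: suppose $\alpha(X,L) < \mu(X,L)/(n+1)$. By definition of the alpha invariant there exist $k \in \Z_{>0}$, an effective divisor $D \in |kL|$, and a rational number $c$ with $\alpha(X,L) \leq c < \mu/(n+1)$ for which $(X,\tfrac{c}{k}D)$ fails to be log canonical. Passing to a log resolution of $(X,D)$ and extracting the divisor computing the log canonical threshold produces a prime divisor $F$ over $X$ with
\[A_X(F) = \lct(X,D/k) \cdot \tfrac{\ord_F(D)}{k}, \qquad \lct(X,D/k) \leq c.\]
Effectivity of $D - \ord_F(D) F$ on the log resolution gives $\ord_F(D)/k \leq \tau_L(F)$, so
\[A_X(F) \;<\; \frac{\mu\, \tau(F)}{n+1}.\]

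Next, I would apply the reformulation of $\beta(F)$ from Corollary \ref{cor:fuj},
\[\beta(F) = A_X(F)\Vol(L) - \mu \int_0^{\tau(F)} \Vol(L-xF)\,dx + \int_0^{\tau(F)} \Vol'(L-xF).(\mu L + K_X)\,dx,\]
and use the concavity of $\Vol^{1/n}$ as a function of $x$, which underlies the estimate $S(F)\geq \tau(F)/(n+1)$ in Proposition \ref{prop:lipshitz-norms}, to obtain $\int_0^{\tau(F)}\Vol(L-xF)\,dx \geq \Vol(L)\,\tau(F)/(n+1)$. Combined with the strict bound on $A_X(F)$, the first two terms of $\beta(F)$ contribute a strictly negative quantity, exactly mirroring Fujita--Odaka's calculation in the Fano case.

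The hard part will be handling the correction $\int_0^{\tau(F)} \Vol'(L-xF).(\mu L + K_X)\,dx$. It vanishes identically in the Fano setting because $\mu L + K_X=0$, but in general it does not. A first observation that signals the right normalisation is that $(\mu L + K_X) \cdot L^{n-1}=0$, so $\Vol'(L).(\mu L + K_X)=0$ and the integrand vanishes at $x=0$. To push further, I would use the minimal model program decomposition of Theorem \ref{thm:KKL} together with Lemma \ref{vol-2} to rewrite the integrand on each subinterval $[\tau_{j-1}, \tau_j]$ as an intersection number on the birational model $Y_j$, and then apply Lemma \ref{discrepancy} to express the pushforward of $\mu L + K_X$ in terms of $K_{Y_j}$, $F_j$ and $A_X(F)$. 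Showing that the resulting correction is non-positive (or at least small enough not to overwhelm the strictly negative principal terms) is, in my view, the crux of the argument; once achieved, the inequality $\beta(F)<0$ contradicts strong valuative semistability and completes the proof.
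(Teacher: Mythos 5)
You have correctly reproduced the Fujita--Odaka skeleton (extract the divisor $F$ computing $\lct(X,\tfrac1k D)$, bound $A_X(F)$ by $\tfrac{\mu}{n+1}\tau(F)$, compare with $\int_0^\tau \Vol$ via concavity), and you have correctly identified the derivative-of-volume correction as the new obstacle. But you then leave that obstacle unresolved, and along the route you have chosen it cannot be resolved: for a general lct-computing divisor $F$, the class $\mu L + K_X$ is merely orthogonal to $L^{n-1}$, so while $\Vol'(L).(\mu L + K_X)=0$ at $x=0$, for $x>0$ the positive part of $L-xF$ leaves the ray through $L$ and $\Vol'(L-xF).(\mu L + K_X)$ has no sign; running the MMP and invoking Lemma \ref{vol-2} only rewrites it as $n\,(L_j-xF_j)^{n-1}.(\phi_j)_*(\mu L+K_X)$, which is again an intersection against a class with no positivity. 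Nothing forces this correction to be dominated by the strictly negative principal terms, so as written this is a genuine gap rather than a deferred technicality.

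The paper's proof avoids the issue entirely by \emph{not} passing to the lct-computing divisor. It applies strong valuative semistability to the valuation determined by $D$ itself, treated as a $\Q$-divisor with $\tfrac1k D\sim_{\Q} L$ (this is where the \emph{strong} hypothesis is used: such valuations need not be dreamy, and the log discrepancy of the pair is the log canonical threshold by \cite[Proposition 8.5]{singularitiesofpairs}). Then $L-xD\sim_{\Q}(1-x)L$, so $\tau=1$, $\Vol(L-xD)=(1-x)^nL^n$ and $\Vol'(L-xD).K_X=n(1-x)^{n-1}L^{n-1}.K_X$, and every term of $\beta$ integrates in closed form:
$$\beta_L(D)=\lct(X,D)L^n+\frac{n\mu}{n+1}L^n+K_X.L^{n-1}=\left(\lct(X,D)-\frac{\mu}{n+1}\right)L^n,$$
whence semistability gives $\lct(X,D)\geq \mu/(n+1)$ directly. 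The point is that along the ray through $L$ the correction term is not estimated but computed exactly, and it conspires with the $n\mu\int\Vol$ term to produce precisely $-\tfrac{\mu}{n+1}L^n$; no concavity estimate and no MMP decomposition are needed. I would encourage you to redo the argument with this choice of valuation.
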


\begin{proof}

Consider a divisor $D \in |kL|$. Note that $\lct(X,D) = A_X(D)$ \cite[Proposition 8.5]{singularitiesofpairs}. By working with $D$ as a $\Q$-divisor, we may assume $k=1$, and hence $\tau_L(D) = 1.$ It follows that \begin{align*}\beta_L(D) =& \lct(X,D) L^n + n \mu \int_{0}^{1} (1-x)^n\Vol(L)dx + \int_0^{1}(1-x)^{n-1}\Vol'(L)\cdot K_X dx, \\ &= \lct(X,D) L^n +\frac{L^n}{n+1} + L^{n-1}.K_X. \end{align*} By hypothesis, $(X,L)$ is a strongly valuatively semistable, and hence $\beta_L(D) \geq 0$. Thus $$ \lct(X,D) \geq \frac{\mu(X,L)}{n+1}.$$ The result follows by taking the infimum over all such $D$. \end{proof}

In the toric setting, we can replace \emph{strong} valuative semistability with valuative semistability.

\begin{corollary} Suppose $(X,L)$ is a valuatively semistable toric variety. Then $$\alpha(X,L) \geq \frac{\mu(X,L)}{n+1}.$$\end{corollary}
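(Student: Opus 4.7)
The plan is to rerun the proof of Theorem \ref{thm:alphabounds} in the toric setting, observing that the only place where \emph{strong} valuative semistability is used can be weakened to valuative semistability once we restrict attention to torus-invariant divisors.

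First, I would invoke the classical averaging argument over the compact torus to reduce to the $T$-equivariant alpha invariant: $\alpha(X,L) = \alpha_T(X,L)$, where the latter is the infimum of $\lct(X, \tfrac{1}{k}D)$ over $T$-invariant effective $D \in |kL|$. Next, I would use the standard fact that for torus-invariant pairs the log canonical threshold is computed by a torus-invariant divisorial valuation, that is, a toric prime divisor over $X$ in the sense of Section 1. Third, I would apply Example \ref{dreamy-fano} to conclude that every toric prime divisor over $X$ (including those on $X$ itself) is dreamy, since any projective toric variety is of Fano type.

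With these three reductions in hand, the calculation of Theorem \ref{thm:alphabounds} applies directly: the inequality $\beta(F) \geq 0$ invoked in that proof is, in the toric setting, only needed for toric divisors, which are dreamy, and so follows from valuative semistability alone. The remainder of the computation, which bounds the relevant log canonical thresholds below by $\mu(X,L)/(n+1)$ from the explicit form of $\beta(F)$, goes through verbatim and yields $\alpha(X,L) \geq \mu(X,L)/(n+1)$. The main obstacle is the initial reduction to toric divisors and valuations: although both the $T$-averaging of the alpha invariant and the toric characterisation of log canonical thresholds are standard, one must check that the reduction is compatible with the particular way $\beta$ appears in the proof of Theorem \ref{thm:alphabounds}, so that valuative semistability (rather than its strong variant) is genuinely sufficient to supply $\beta(F) \geq 0$ in every instance where the argument requires it.
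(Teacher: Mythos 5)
Your proposal follows essentially the same route as the paper: reduce the computation of $\alpha(X,L)$ to torus-invariant divisors, observe that the relevant toric objects are dreamy because toric varieties are of Fano type (Example \ref{dreamy-fano}), and then rerun the computation of Theorem \ref{thm:alphabounds} with strong valuative semistability weakened to valuative semistability. One correction to the first step: the reduction to $T$-invariant divisors is not an averaging argument. Averaging an effective divisor over a positive-dimensional torus does not produce a member of $|kL|$, and the Jensen-type inequality one obtains by averaging potentials only gives the trivial direction $\alpha_T \geq \alpha$; what is needed is $\alpha \geq \alpha_T$. The mechanism, due to Cheltsov--Shramov and cited in the paper, is to degenerate an arbitrary $D \in |kL|$ under the $\C^*$-subgroups of the torus to a linearly equivalent $T$-invariant divisor and use that the log canonical threshold can only drop under such a specialisation. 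Separately, your second reduction (passing to the divisorial valuation computing the log canonical threshold) is not needed for the argument as the paper runs it: the proof of Theorem \ref{thm:alphabounds} plugs the divisor $D$ itself into $\beta$ --- in the toric case one takes its proper transform on a toric resolution of $(X,D)$, which is dreamy --- so once $D$ is torus-invariant the computation applies directly; if you instead insist on using the lct-computing valuation $F$, the computation is no longer verbatim (one has $\tau(F) \geq \ord_F(D)$ and $\Vol(L-xF) \geq (1 - x/\ord_F(D))^n \Vol(L)$ only as inequalities, in the style of Fujita--Odaka), though the inequalities do point the right way.
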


\begin{proof}

By a result of Cheltsov-Shramov, the alpha invariant $\alpha(X,L)$ can be computed using toric divisors $D$ on $X$ \cite[Lemma 5.1]{cheltsov-shramov}; the proof degenerates an arbitrary divisor in the linear system $|kL|$ to a linearly equivalent one which is invariant under each $\C^*$-action inside the torus successively, and uses that the log canonical threshold can only drop when taking such a limit. 

Thus let $D$ be a toric divisor. Since $D$ is toric, one can take a toric resolution of singularities $Y \to X$ of $(X,D)$. Since $Y$ is toric, the proper transform of $D$ on $Y$ is thus dreamy. This means that in the previous argument, we can weaken strong valuative semistability to simply valuative semistability, proving the result. \end{proof}

These results are of course only interesting when $\mu(X,L) > 0$; this is automatic for toric varieties. Since K-semistability implies valuative semistability, the above applies to K-semistable toric varieties.

\section{Toric varieties}\label{sec:toric}

Let $X=X_\Sigma$ be a compact $n$--dimensional toric variety associated to a fan $\Sigma\subset N_\R$ where $N$ is the lattice of circle subgroups of the torus $T_N\simeq (\C^*)^n$. We first consider valuative stability for toric divisors on $X$ itself.  We follow the notation of \cite{CLS:toricbook} and thus denote $M=N^*$ the lattice of characters of $T_N$ and $\Sigma(1)$ the set of rays of $\Sigma$. Each $\rho\in \Sigma(1)$ determines both a prime divisor $D_\rho$ and  an element  $u_\rho\in N$, namely the (unique) primitive vector in $\rho \cap N$, see \cite[\S 4.1]{CLS:toricbook}. The ample line bundle corresponds to a full dimensional lattice polytope $P=P_L$ (uniquely determined by the linear equivalence of $L$ up to a translation) whose fan is $\Sigma$.  

We will consider the case where $\Sigma$ is simplicial, to ensure that $X_\Sigma$ is a Mori dream space \cite{HK}. In that case, $X_\Sigma$ has at worst orbifold singularities; equivalently it is normal and $\Q$--factorial (i.e any Weil divisor is $\Q$-Cartier) \cite[p.549]{CLS:toricbook}.

The torus relative Futaki invariant of a toric polarised variety $(X, L)$ can be identified with the difference of the barycentres of the polytope $P_L$ and its boundary as highlighted in \cite{donaldson-toric}. More precisely, $N_\R$ is the Lie algebra of the real compact torus lying in $T_N$ and thus parameterises the space of toric holomorphic vector fields on $X$. Alternatively, elements of $N_\R$ are identified with affine-linear functions on $P_L$. The Futaki invariant in this setting coincides, up to a positive factor, with a function $\mbox{Fut} : N_\R \ra \R$ defined at $f\in N_\R$ by
\begin{equation}\label{e:FutToric}
 \mbox{Fut}(f) = \frac{1}{\Vol_{M} (\partial P_{L})}\int_{\partial P} f \varpi^{\partial P} - \frac{1}{ \Vol_M(P_L)} \int_P f \varpi^M
\end{equation}where $\Vol_{M} (\partial P_{L})= \int_{\partial P} \varpi^{\partial P}$, $\Vol_M(P_L)=\int_P  \varpi^M$ and the measures $\varpi^M$ and  $\varpi^{\partial P}$ will be defined in the next paragraph. It follows from this formula~\eqref{e:FutToric}, since $f$ is affine linear, that the vanishing of the Futaki invariant for all such functions $f$ is equivalent to the barycentres of $P$ and $\partial P$ being the same, as claimed.

We now recall the definition of the measures used to compute these barycentres.  With the lattice $M$ comes a unique measure on $M_\R$, the Lebesgue measure, scaled so that $M_\R/ M$ has volume $1$. The same happens for each \emph{rational} subspaces $H$ of $M_\R$, i.e., those such that $M\cap H$ span $H$. In particular, this is true for the hyperplane $\rho^\lor + m$ where $\rho\in \Sigma(1)$ and $m\in M$ and more generally by translation for $m\in M_\R$. These measures will be encoded by volume forms, say $\varpi^M \in \Lambda^n(M_\R)$ and $\varpi_\rho\in \Lambda^{n-1}(\rho^\lor + m)$. We define $\varpi^{\partial P}$ so that its restriction to the facet of $P$ corresponding to $\rho\in \Sigma(1)$ equals $\varpi_\rho$. %Note that $\varpi^{\partial P}$ depends on the the choice of $\varpi^M$ (but the expression \eqref{e:FutToric} does not) and on the rays.

\begin{remark} Note that along $\rho^\lor + m$, we have 
 \begin{equation}\label{e:BndMEASURE} u_\rho\wedge \varpi_\rho =- \varpi^M\end{equation} so the Futaki invariant formula \eqref{e:FutToric} coincides with Donaldson's formula in \cite{donaldson-toric}. 
\end{remark}

Recall that a Cartier divisor $D$ on $X$ is necessary of the form $D\sim \sum_{\rho\in \Sigma(1)} a_\rho D_\rho$ and is associated to a polyhedron $$P_D := \{ m\in M_\R \,|\, \langle m,u_\rho \rangle \geq -a_\rho,\;\; \rho \in \Sigma(1)\}.$$ For $\rho\in \Sigma(1)$, we define the (possibly empty) convex set $F^{P_D}_{\rho}:= P_D\cap  \{ m\in M_\R \,|\, \langle m,u_\rho \rangle = -a_\rho \}$. The boundary of $P_D$, if non-empty, is an union of set of the forms $F^{P_D}_{\rho}$. If $D$ is ample $F^{P_D}_{\rho}$ is a non empty codimension $1$ face for any $\rho\in \Sigma(1)$, otherwise $F^{P_D}_{\rho}$ might be of lower dimension or empty. If $\dim F^{P_D}_{\rho} < \dim M_\R -1$ then $F^{P_D}_{\rho}$ has no contribution on the following numerical invariant: $$\Vol_{M} (\partial P_D) := \sum_{ \rho\in \Sigma(1)} \int_{F^{P_D}_{\rho}} \varpi_\rho.$$ Here, if $\dim F^{P_D}_{\rho} = n-1$, then $F^{P_D}_{\rho}$ is endowed with the orientation coming from the inclusion $F^{P_D}_{\rho} \subset P_D$; that is, $\int_{F^{P_D}_{\rho}} \varpi_\rho>0$.

Thanks to \cite[Theorem 4.1.3]{CLS:toricbook}, $\Vol_{M} (\partial P_D)$ depends on $D$ only up to linear equivalence. The notation $\Vol_{M} (\partial P_D)$ might be misleading because when $\dim P_D = n-1$, the usual boundary $\partial P_D = \emptyset$ but $\Vol_{M} (\partial P_D)\neq 0$. This situation does not happen when $D$ is big \cite[p.427]{CLS:toricbook}.

\begin{lemma}\label{l:FormulaVolume} Let $L$ be an ample line bundle over $X= X_\Sigma$. For any $x\in [0,+\infty)$ and $\rho \in\Sigma(1)$, we have $\Vol (L) = n! \Vol_{M} (P)$, 
$$\Vol (L-xD_\rho)  = n! \Vol_{M} (P_L \cap (\langle , u_\rho\rangle \geq x-a_\rho )),$$
 and for $x\in [0,\tau_L(D_\rho))$ $$\Vol' (L-xD_\rho)\cdot (K_X) = - n!\Vol_{M} (\partial (P_{L-xD_\rho})).$$
\end{lemma}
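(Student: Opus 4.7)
The plan is to reduce each of the three quantities to an integral over the lattice polyhedron of the relevant toric divisor and then apply polytope calculus. Part (i) is the classical Demazure formula: global sections of $kL$ are spanned by the characters $\chi^m$ with $m \in kP_L \cap M$, so asymptotic lattice-point counting yields $\Vol(L) = n!\Vol_M(P_L)$.

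For (ii), writing $L = \sum_{\rho' \in \Sigma(1)} a_{\rho'} D_{\rho'}$, the divisor $L - xD_\rho$ corresponds to decreasing $a_\rho$ by $x$, so its polyhedron equals
\[
P_{L-xD_\rho} = P_L \cap \{m \in M_\R : \langle m, u_\rho\rangle \geq x - a_\rho\}.
\]
Since $x \in [0, \tau_L(D_\rho))$ this polyhedron is full-dimensional, and the same Demazure-type lattice-point count applied to integer multiples of $L - xD_\rho$ (for rational $x$), followed by continuity of both sides in $x$, gives $\Vol(L - xD_\rho) = n!\Vol_M(P_{L-xD_\rho})$.

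For (iii), the plan is to use the canonical divisor formula $K_X = -\sum_{\rho' \in \Sigma(1)}D_{\rho'}$ to express
\[
L - xD_\rho + tK_X = \sum_{\rho'\neq \rho}(a_{\rho'}-t)D_{\rho'} + (a_\rho - x - t)D_\rho.
\]
Its polyhedron $P(t)$ is precisely $P_{L-xD_\rho}$ with every defining inequality strengthened by $t$, and it remains big for $|t|$ small since $x < \tau_L(D_\rho)$. Applying (ii) yields $\Vol(L - xD_\rho + tK_X) = n!\Vol_M(P(t))$, and differentiating at $t=0$ reduces everything to computing the first variation of the polytope volume under simultaneous parallel inward translations of all facets.

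The main obstacle is this first-variation formula, which I would establish via Fubini in a collar neighbourhood of each facet. Concretely, the sliver $P(0) \setminus P(t)$ near a full-dimensional facet $F^{P_{L-xD_\rho}}_{\rho'}$ has Lebesgue volume $t\int_{F^{P_{L-xD_\rho}}_{\rho'}}\varpi_{\rho'} + O(t^2)$, where the normalisation $u_{\rho'} \wedge \varpi_{\rho'} = -\varpi^M$ is designed precisely to make this identity hold, including the sign. Lower-dimensional "facets" contribute nothing to leading order, which is consistent with the convention that $\int \varpi_{\rho'}$ vanishes on them. Summing the local contributions over all $\rho' \in \Sigma(1)$ gives
\[
\frac{d}{dt}\Vol_M(P(t))\Big|_{t=0} = -\sum_{\rho' \in \Sigma(1)}\int_{F^{P_{L-xD_\rho}}_{\rho'}} \varpi_{\rho'} = -\Vol_\Sigma(\partial P_{L-xD_\rho}),
\]
and multiplying by $n!$ yields the claimed formula.
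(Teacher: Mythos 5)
Your proposal is correct and follows essentially the same route as the paper: parts (i) and (ii) are the standard lattice-point count (the paper cites Fujita's Claim 6.2 in the $\Q$-Fano case), and part (iii) is obtained by writing $L-xD_\rho+tK_X$ as a simultaneous inward shift of all facets by $t$ and differentiating the polytope volume, which the paper phrases via the coarea identity $\Vol_M(P_x)-\Vol_M(P_{x,t})=\int_0^t\Vol_\Sigma(\partial P_{x,s})\,ds$ rather than your facet-by-facet collar/Fubini estimate. The only point you gloss over, and which the paper handles explicitly, is that the sliver computation is one-sided in $t$; one invokes the $C^1$ regularity of the volume function to conclude that the one-sided derivative at $t=0$ is the actual derivative.
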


\begin{proof}
 The first two statements are proved by substituing $-K_X$ with  $L$ in Fujita's argument of Claim 6.2 in \cite{fujita-plms}. For the last statement, note that for $x\in [0,\tau_L(D_\rho))$, the divisor $L-xD_\rho$ is big and thus $P_x:= P_{L-xD_\rho}$ is a full dimensional polytope, and the same hold for $P_{x,t}:= P_{L-xD_\rho + tK_X}$ when $t\in \R$ is sufficiently small. We pick $a_\lambda(x) \in \R$ so that $$L-xD_\rho \sim \sum_{\lambda \in \Sigma(1)}  a_\lambda(x) D_\lambda$$ and recall that $-K_X\sim \sum_{\lambda \in \Sigma(1)} D_\lambda$ \cite[Theorem 8.2.3]{CLS:toricbook}. Hence $$L-xD_\rho + tK_X \sim \sum_{\lambda \in \Sigma(1)}  (a_\lambda(x) -t) D_\lambda.$$   
 
 Assuming $t>0$,  $P_{x,t} \subseteq P_{x}$ and if $\epsilon>0$ is small enough the combinatorial type of $P_{x,t}$ does not depend on $t\in [0,\epsilon)$. Then
 \begin{equation}\label{eqn:measure}
  \begin{split}
 \Vol_{M} (P_{x,t}) & =\Vol_{M} (P_{x}) - \Vol_{M} (P_{x}\backslash P_{x,t})\\
  & =\Vol_{M} (P_{x}) - \int_0^t \Vol_{M}(\partial P_{x,s}) ds. 
  \end{split}
 \end{equation}
% 
% \begin{remark}\label{rmk:orientation} To remove any ambiguity on the choice of the measure on $ \partial P_{x,s}$ for which Equation  \eqref{eqn:measure} is valid, observe that given $\lambda \in \Sigma(1)$, $u_\lambda$ can be completed in a $\Z$--basis $e_2,\dots,e_n$ of $N$ and $u_\lambda \in N\subset N_\R$ can be seen as affine invariant $1$-form on $M_\R$, so that $\varpi^M =  - u_\lambda \wedge e_2  \wedge \dots \wedge e_n$ and the restriction of $e_2  \wedge \dots \wedge e_n$ to the facet $F^{L-xD_\rho}_{a_\lambda(x)}$ satisfies \eqref{e:BndMEASURE}.  \end{remark}
 
 The derivative with respect to $t$ of the right hand side is $-\Vol_{M} (\partial (P_{L-xD_\rho}))$. To conclude the proof, one argues that the left hand side is $C^1$ so that we may compute the derivative at $t=0$ using any converging sequence (so assuming $t>0$ is sufficient). \end{proof}

\begin{corollary} For a compact toric variety $X_\Sigma$ and ample bundle $L$, we have that $$\mu(X,L) = \frac{\Vol_{M} (\partial P_{L})}{n \Vol_M(P_L)}$$ and for any prime toric divisor $D= D_\rho$, we have  $$\beta_L(D) = \Vol_M(P_L) + \frac{\Vol_{M} (\partial P_{L})}{\Vol_M(P_L)} \int_{0}^{\tau}  \Vol_M(P_{L-xD}) dx -  \int_{0}^{\tau}\Vol_{M} (\partial (P_{L-xD})dx.$$ \end{corollary}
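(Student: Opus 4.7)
The plan is to substitute the three volume formulas from Lemma \ref{l:FormulaVolume} directly into the definitions of $\mu(X,L)$ and $\beta_L(D)$, once we have identified the log discrepancy of a toric prime divisor. The three ingredients we need are: $A_X(D_\rho) = 1$, the expression of $\mu$ via $\Vol'(L).K_X$, and the formula for $\beta$ itself.

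First, I would check that $A_X(D_\rho) = 1$ for any torus-invariant prime divisor $D_\rho$ on $X$. Since $D_\rho$ is a divisor on $X$ itself (not an exceptional divisor on a model over $X$), we may take $\pi = \mathrm{id}_X$ in the definition; then $A_X(D_\rho) = \ord_{D_\rho}(K_X - K_X) + 1 = 1$.

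Next, to obtain the formula for $\mu(X,L) = -K_X.L^{n-1}/L^n$, note that on the ample cone the volume function coincides with the top self-intersection, so
\[
\Vol'(L).K_X = n L^{n-1}.K_X.
\]
Evaluating the last formula of Lemma \ref{l:FormulaVolume} at $x=0$ (for any fixed ray $\rho$) gives $\Vol'(L).K_X = -n!\,\Vol_\Sigma(\partial P_L)$, hence $L^{n-1}.K_X = -(n-1)!\,\Vol_\Sigma(\partial P_L)$. Combining with $L^n = n!\,\Vol_M(P_L)$ yields
\[
\mu(X,L) = \frac{\Vol_\Sigma(\partial P_L)}{n\,\Vol_M(P_L)},
\]
as claimed.

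Finally, I would substitute everything into
\[
\beta_L(D) = A_X(D)\Vol(L) + n\mu \int_0^{\tau}\Vol(L-xD)\,dx + \int_0^{\tau}\Vol'(L-xD).K_X\,dx,
\]
using $A_X(D) = 1$, $\Vol(L-xD) = n!\,\Vol_M(P_{L-xD})$, and $\Vol'(L-xD).K_X = -n!\,\Vol_\Sigma(\partial P_{L-xD})$ from Lemma \ref{l:FormulaVolume}, together with the formula for $\mu$ derived above. The $n$ and $1/n$ factors from $n\mu$ cancel, and a common factor of $n!$ appears, yielding the stated expression (up to the overall positive constant $n!$, which does not affect any sign-based stability notion).

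There is no real obstacle — the corollary is essentially a dictionary translation. The only subtle points are the identification $A_X(D_\rho)=1$ (which relies on $D_\rho$ being a divisor on $X$, so that $\Q$-factoriality suffices to make sense of the log discrepancy without recourse to a resolution), and the fact that the differentiability of $\Vol$ at ample classes lets us identify $\Vol'(L).K_X$ with $n L^{n-1}.K_X$, thereby converting the derivative-of-volume formula of Lemma \ref{l:FormulaVolume} into the intersection-theoretic expression needed for $\mu$.
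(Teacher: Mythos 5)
Your proof is correct and follows the same route the paper intends: the corollary is a direct substitution of Lemma \ref{l:FormulaVolume} into the definitions of $\mu$ and $\beta$, using $A_X(D_\rho)=1$ for a divisor on $X$ itself and the identification $\Vol'(L).K_X = nL^{n-1}.K_X$ on the ample cone. You are also right to flag that the displayed expression is really $\beta_L(D)/n!$ relative to the paper's definition of $\beta$ — a normalisation the paper silently absorbs (consistently with how $\beta_L$ is then used in Lemma \ref{l:barycenters=divisorial}) and which is harmless for every sign-based stability statement.
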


\begin{lemma}\label{l:barycenters=divisorial} Fix $\lambda \in \Sigma(1)$ and an ample line bundle $L$. We have $$\frac{\beta_{L}(D_\lambda)}{\Vol_{M}(\partial P_L)} = \langle b_{P_L}-b_{\partial P_L}, u_\lambda \rangle$$ where $b_{P_L}:=\barycenter(P_L,\varpi^M)$ and $b_{\partial P_L}:=\barycenter(\partial P_L,\varpi_\Sigma)$ are the barycentres.
\end{lemma}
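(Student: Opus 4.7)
The plan is to unpack the expression for $\beta_L(D_\lambda)/\Vol_\Sigma(\partial P_L)$ given in the preceding corollary and rewrite each of its two integrals as a Fubini-type identity whose right-hand side contains the moments $\int_{P_L}\langle m,u_\lambda\rangle\,\varpi^M$ and $\int_{\partial P_L}\langle m,u_\lambda\rangle\,\varpi_\Sigma$ that compute the barycentres, together with elementary correction terms that will eventually cancel.

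First, since $P_{L-xD_\lambda}=P_L\cap\{\langle m,u_\lambda\rangle\geq x-a_\lambda\}$, the substitution $s=x-a_\lambda$ together with Fubini immediately gives
\[
\int_0^{\tau}\Vol_M(P_{L-xD_\lambda})\,dx \;=\; \int_{P_L}\langle m,u_\lambda\rangle\,\varpi^M \;+\; a_\lambda\Vol_M(P_L).
\]
For the boundary integral I would split $\partial P_{L-xD_\lambda}$ into the truncated ``old'' facets $F^{P_L}_\rho\cap\{\langle m,u_\lambda\rangle\geq x-a_\lambda\}$ for $\rho\neq\lambda$, to which the same Fubini substitution applies directly, and the new facet $P_L\cap\{\langle m,u_\lambda\rangle=x-a_\lambda\}$, to which the defining identity $u_\lambda\wedge\varpi_\lambda=-\varpi^M$ applies as a coarea formula that sweeps all of $P_L$ and produces $\Vol_M(P_L)$. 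Completing the sum over $\rho\neq\lambda$ to a sum over all of $\Sigma(1)$ by adding and subtracting the $\rho=\lambda$ term, and using that $\langle m,u_\lambda\rangle=-a_\lambda$ on $F^{P_L}_\lambda$ so that the subtracted contribution vanishes, yields
\[
\int_0^{\tau}\Vol_\Sigma(\partial P_{L-xD_\lambda})\,dx \;=\; \int_{\partial P_L}\langle m,u_\lambda\rangle\,\varpi_\Sigma \;+\; a_\lambda\Vol_\Sigma(\partial P_L)\;+\;\Vol_M(P_L).
\]

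Substituting these two identities into the formula for $\beta_L(D_\lambda)/\Vol_\Sigma(\partial P_L)$, the terms proportional to $a_\lambda$ cancel against each other, as do the two copies of $\Vol_M(P_L)/\Vol_\Sigma(\partial P_L)$, leaving precisely $\langle b_{P_L}-b_{\partial P_L},u_\lambda\rangle$. The delicate point I expect is the orientation convention of Remark \ref{rmk:orientation}: one must ensure that applying $u_\lambda\wedge\varpi_\lambda=-\varpi^M$ consistently on parallel slices of $P_L$, rather than only on the facet $F^{P_L}_\lambda$ itself, produces $+\Vol_M(P_L)$ and not its negative in the coarea step, and analogously that the truncated old facets inherit the correct orientation from $F^{P_L}_\rho\subset\partial P_L$ so that the Fubini substitutions carry no sign error.
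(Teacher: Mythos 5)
Your proposal is correct and follows essentially the same route as the paper: both slice $P_L$ and $\partial P_L$ by the level sets of $\ell_\lambda=\langle\cdot,u_\lambda\rangle+a_\lambda$ and identify $\int_0^\tau\Vol_M(P_{L-xD_\lambda})\,dx$ and $\int_0^\tau\Vol_\Sigma(\partial P_{L-xD_\lambda})\,dx$ with the corresponding moments plus the correction terms $a_\lambda\Vol_M(P_L)$, respectively $a_\lambda\Vol_\Sigma(\partial P_L)+\Vol_M(P_L)$, which then cancel against the $A_X(D_\lambda)\Vol(L)$ term exactly as you describe. The only cosmetic difference is that where you apply Fubini directly to the facet decomposition of $\partial P_{L-xD_\lambda}$ into $F_{\lambda,x}$ and the truncated facets $F^{P_L}_\rho\cap\{\ell_\lambda\geq x\}$, the paper integrates by parts in $x$ (using $\Vol_\Sigma(\partial P_\tau)=\Vol_\lambda(F_{\lambda,\tau})$ to kill the boundary term); the sign issue you flag is resolved exactly as in Remark~\ref{rmk:orientation} via $\Vol_\lambda(F_{\lambda,x})=-\frac{d}{dx}\Vol_M(P_{L-xD_\lambda})$.
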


\begin{remark}
 Whenever $L= -K_X$, using $-K_X\sim \sum_{\lambda \in \Sigma(1)} D_\lambda$ one can check that 
 $$ \left( 1+\frac{1}{n}\right) b_P = b_{\partial P}$$
 and we recover Fujita's formula \cite[Theorem 6.1]{fujita-plms}.
\end{remark}

    \begin{proof} We denote $P=P_L$, $P_x := P_{L-xD_\lambda}$ and $\tau= \tau_{D_\lambda}(L)$. Note that $$P=\bigsqcup_{0\leq x\leq \tau} F_{\lambda,x} $$ where $F_{\lambda,x} = P\cap \{ m\in M_\R\,|\, \langle m, u_\lambda \rangle = -a_\lambda +x \} $ and $L= \sum_{\rho \in \Sigma(1)} a_\rho D_\rho$. Then, $$\int_P \langle m, u_\lambda \rangle \varpi^M = \int_0^\tau (x-a_\lambda) \left(\int_{F_{\lambda,x}} \varpi_\lambda \right)dx = \int_0^\tau (x-a_\lambda)\Vol_\lambda (F_{\lambda,x}) dx$$ where $\Vol_\lambda (F_{\lambda,x}) :=  \int_{F_{\lambda,x}} \varpi_\lambda$. Now, we have $$\Vol_\lambda (F_{\lambda,x}) = \frac{-1}{n!} \frac{d}{dx}\Vol(L-xD_\lambda) = - \frac{d}{dx} \Vol_M(P_{L-xD_\lambda}).$$ Hence, using integration by parts, we obtain 
    \begin{equation}\label{e:barycenterP}
     \int_P \langle m, u_\lambda \rangle \varpi^M = -a_\lambda \Vol_M(P) + \int_0^\tau \Vol_M(P_{L-xD_\lambda})\, dx.
    \end{equation} 
    
    We define the affine-linear function $\ell_\lambda (m) = \langle m, u_\lambda \rangle +a_\lambda$ so that $0 = \min_P \{\ell_\lambda\}$ and thus $\tau = \max_P \{\ell_\lambda\}$. The boundary barycentre gives \begin{equation}\label{e:barycenterPartialPAff} \langle b_{\partial P}, u_\lambda \rangle + a_\lambda = \frac{1}{\int_{\partial P} \varpi_\Sigma}\int_{\partial P} \ell_\lambda \, \varpi_\Sigma.\end{equation} As before we can write $$\int_{\partial P} \ell_\lambda \,\varpi_\Sigma = \int_{0}^\tau x \Vol_{\lambda,\varpi}^{n-2}(\partial P \cap \{\ell_\lambda= x \}) dx$$ where $\Vol_{\lambda,\varpi}^{n-2}(\partial P \cap \{\ell_\lambda= x \})$ is the volume of $\partial P \cap \{\ell_\lambda= x \}$ with respect to a volume form and orientation that are cumbersome to define but satisfy $$-\frac{d}{dx} \Vol_{M} (\partial P \cap \{\ell_\lambda > x\}) = \Vol_{\lambda,\varpi}^{n-2}(\partial P \cap \{\ell_\lambda= x \}). $$ Using that $\Vol_{M} (\partial P \cap \{\ell_\lambda > x\}) = \Vol_{M} (\partial P_x) - \Vol_\lambda (F_{\lambda,x})$ and integration by parts produces
    
     \begin{equation}\label{e:barycenterPartialP}
  \begin{split}
   \int_{\partial P} \ell_\lambda \,\varpi_\Sigma &= \int_0^\tau d \left( x \left[ \Vol_{M}(\partial P_x) -\Vol_\lambda (F_{\lambda,x})\right]\right) \; + \;  \int_0^\tau \left( \Vol_{M}(\partial P_x) -\Vol_\lambda (F_{\lambda,x})\right) dx \\
   &= -\tau \Vol_{M}(\partial P_\tau) + \tau \Vol_\lambda (F_{\lambda,\tau}) \,+ \,  \int_0^\tau \Vol_{M}(\partial P_x) dx  \,- \,  \Vol P\\
   &=   \int_0^\tau  \Vol_{M}(\partial P_x) dx  \,- \,  \Vol P.
  \end{split}\end{equation} Here, $\Vol_{M}(\partial P_\tau) :=\lim_{x\ra \tau} \Vol_{M}(\partial P_x)$ and for the last line we have used that $\Vol_{M}(\partial P_\tau) = \Vol_\lambda (F_{\lambda,\tau})$. Indeed, if there is no parallel facets to $F^P_\lambda$ in $P$ then both vanish, while if $F_{\lambda,\tau}$ is a facet of $P$ then $\Vol_{M}(\partial P_\tau) = \Vol_\lambda (F_{\lambda,\tau})$. We get the result we seek by combining \eqref{e:barycenterPartialP}, \eqref{e:barycenterPartialPAff} and \eqref{e:barycenterP}. \end{proof}

\subsection{Star subdivision}

We next consider the case of a toric prime divisor on a birational toric model $Y \to X$. Assume again that $X=X_{\Sigma}$ is a compact toric manifold with (complete and simplicial) fan $\Sigma$. Consider $u_\nu \in N$ a primitive lattice element with ray $\nu = \mbox{Cone}(u_\nu)$ and the associated star subdivision $\Sigma^*_\nu:=\Sigma^*(\nu)$ as defined in \cite[p.515]{CLS:toricbook} which is refinement of $\Sigma$. Note that $$\Sigma^*_\nu(1) = \Sigma(1) \cup \{\nu\}$$ as proved in \cite[Theorem 11.1.6]{CLS:toricbook} and that $\Sigma^*_\nu$ is complete and simplicial. Thus $\Sigma^*_\nu$ is associated to a (normal) compact toric variety say $Y$, with at worst orbifold singularities \cite[Theorem 11.4.8]{CLS:toricbook}, and on which $\nu$ defines $D_\nu$, a prime Weil divisor. This divisor is the exceptional divisor of the toric morphism $\psi : Y\ra X$ induced from the inclusion $\underline{\psi} : \Sigma^*_\nu \ra \Sigma$.

We assume that $\nu \notin \Sigma(1)$ (actually the case $\nu \in \Sigma(1)$ coincides with what we have done above) and denote $\sigma$ the cone of $\Sigma$ of minimal dimension, say $r$, among those containing $\nu$. Thus $\sigma(1) = \{ u_1,\dots, u_r \}$ and there exists $c_i\in\N^*$ such that $u_\nu = \sum_{i=1}^r c_iu_i$. 
Using \cite[Lemma 11.4.10]{CLS:toricbook}, we have that $$K_Y +D_\nu = \psi^*K_X + A_X(D_\nu) D_\nu $$ where $A_X(D_\nu) = \sum_{i=1}^r c_i$.

Letting $L = \sum_{\rho \in \Sigma} a_\rho D_\rho$ be an ample line bundle over $X$, we denote $\varphi_L : \Sigma \ra \R$, the support function of $L$ (so that $a_\rho = -\varphi_L(u_\rho)$). Using the fact that the support function of $\psi^*L =:H$ is also $\varphi_L$ (see the proof of \cite[Lemma 11.4.10]{CLS:toricbook}) we obtain that $$H=  \left(-\sum_{i=1}^r c_i \varphi_L(u_i) \right)D_\nu +\sum_{\rho \in \Sigma} a_\rho D_\rho.$$ Moreover, since $L$ is ample on $X$ there are points $m \in P_L$ such that $\langle m, u_i \rangle=  \varphi_L(u_i),$ $\forall u_i\in \sigma(1)$. More precisely, the hyperplane $A:= \{ m\in M_\R \,|\, \langle m, u_\nu\rangle = \sum_{i=1}^r c_i \varphi_L(u_i) \}$ intersects $P_L$ in a face $F$ whose normal cone is $\sigma$. Denote the affine-linear function $$\ell_\nu(m) := \langle m, u_\nu\rangle -\sum_{i=1}^r c_i \varphi_L(u_i).$$ We have that $P_L \subset \{\ell_\nu \geq 0\}$ and thus $P_H=P_L$.

Observe that in the proof of Lemma~\ref{l:barycenters=divisorial} does not use that $\mbox{Cone}(u_\lambda)\in\Sigma$, hence recycling it in our case gives
\begin{equation}\label{e:betaTORICdivisors}
\frac{\beta_{L}(D_\nu)}{\Vol_{M}(\partial P_L)} = (A_X(D_\nu) -1)\frac{\Vol_M P_L}{\Vol_{M}(\partial P_L)} + \langle b_{P_L}-b_{\partial P_L}, u_\nu \rangle. 
\end{equation}

\begin{corollary}\label{c:toricFover} Assume that the (torus relative) Futaki invariant of $(X,L)$ vanishes identically (i.e $b_{P_L}=b_{\partial P_L}$). To any primitive vector $\nu \in N$ there is an associated a valuation $D_\nu$, defined via the refinement $\Sigma^*_\nu$ as above, which satisfies $\beta_{L}(D_\nu) \geq 0$. Moreover, equality  holds if $\nu\in \Sigma(1)$.  
\end{corollary}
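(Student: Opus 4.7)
The proof is a direct application of the formula \eqref{e:betaTORICdivisors} combined with the standard description of the log discrepancy $A_X(D_\nu)$ obtained from \cite[Lemma 11.4.10]{CLS:toricbook}. Since the heavy lifting (relating $\beta_L(D_\nu)$ to the barycentres and the discrepancy) has already been done in the paragraphs leading up to equation \eqref{e:betaTORICdivisors}, this statement is a quick bookkeeping exercise rather than a substantive new argument.

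The first step is to specialise the identity
\[
\frac{\beta_{L}(D_\nu)}{\Vol_\Sigma(\partial P_L)} = (A_X(D_\nu) -1)\frac{\Vol_M P_L}{\Vol_\Sigma(\partial P_L)} + \langle b_{P_L}-b_{\partial P_L}, u_\nu \rangle
\]
to the case at hand. The Futaki-vanishing hypothesis $b_{P_L}=b_{\partial P_L}$ annihilates the second term, leaving
\[
\beta_{L}(D_\nu) = (A_X(D_\nu) - 1)\,\Vol_M(P_L).
\]
Since $L$ is ample, $\Vol_M(P_L) > 0$, so the sign of $\beta_L(D_\nu)$ is controlled entirely by $A_X(D_\nu) - 1$.

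Next I would recall that, by construction of the star subdivision $\Sigma_\nu^*$, the primitive vector $u_\nu$ lies in the relative interior of a unique minimal cone $\sigma \in \Sigma$ with rays $\sigma(1) = \{u_1, \dots, u_r\}$, and there exist positive integers $c_1, \dots, c_r \in \mathbb{N}^*$ such that $u_\nu = \sum_{i=1}^r c_i u_i$. The discrepancy formula recalled from \cite[Lemma 11.4.10]{CLS:toricbook} gives $A_X(D_\nu) = \sum_{i=1}^r c_i$, so $A_X(D_\nu) \geq r \geq 1$, and therefore $\beta_L(D_\nu) \geq 0$.

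For the equality case, assume $\nu \in \Sigma(1)$. Then the minimal cone of $\Sigma$ containing $\nu$ is $\nu$ itself, so $r = 1$, and $u_\nu$ is already the primitive generator of a ray of $\Sigma$, forcing $c_1 = 1$. Hence $A_X(D_\nu) = 1$, and the displayed formula yields $\beta_L(D_\nu) = 0$. The only point that deserves a brief note is that although \eqref{e:betaTORICdivisors} was derived under the assumption $\nu \notin \Sigma(1)$, the authors already observed that the proof of Lemma~\ref{l:barycenters=divisorial} works verbatim without requiring $\mathrm{Cone}(u_\lambda) \in \Sigma$, so no separate argument is needed for $\nu \in \Sigma(1)$. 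There is no real obstacle here beyond matching conventions on log discrepancy and the sign of $A_X(D_\nu)-1$.
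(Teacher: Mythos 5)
Your proof is correct and is exactly the argument the paper intends: the corollary is read off from Equation \eqref{e:betaTORICdivisors} by killing the barycentre term under the hypothesis $b_{P_L}=b_{\partial P_L}$ and using $A_X(D_\nu)=\sum_{i=1}^r c_i\geq r\geq 1$ (with $A_X(D_\nu)=1$ forced when $\nu\in\Sigma(1)$). The remark that the case $\nu\in\Sigma(1)$ is already covered by Lemma~\ref{l:barycenters=divisorial} matches the paper's own parenthetical observation, so nothing further is needed.
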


Suppose now that $Y$ is a compact toric variety $Y$ and $\psi: Y\ra X$ is a proper birational toric morphism with exceptional (toric prime) divisor $F$.  Note that in that situation, $Y$ is associated to a fan, $Y= X_{\widetilde{\Sigma}}$, and the associated map $\underline{\psi}:\widetilde{\Sigma} \ra \Sigma$ is a refinement. Also, $\widetilde{\Sigma}$ must be of the form above using the description of the exceptional sets \cite[Proposition 11.1.10]{CLS:toricbook}. Thus Corollary \ref{c:toricFover} gives a complete description of toric valuative stability.

\begin{proof} [Proof of Theorem~\ref{prop:toricVal=Fut}.] Assume $\beta_{L}(F) \geq 0$ for any toric prime divisor $F$ over $X$. Because $X$ compact, $\Sigma$ is complete and thus some positive real numbers $t_\rho >0$ satisfy $$\sum_{\rho \in \Sigma(1)} t_\rho u_\rho =0.$$ By linearity of the Futaki invariant and by Lemma~\ref{l:barycenters=divisorial}, we have $$0\leq  \sum_{\rho \in \Sigma(1)} t_\rho \beta_{L}(D_\rho) = \Vol_{M}(\partial P_L)\, \sum_{\rho \in \Sigma(1)} t_\rho \mbox{Fut}_{L}(u_\rho) = 0.$$ Thus, $\beta_{L}(D_\rho)=0$ for any $\rho \in \Sigma(1)$. The converse is Corollary~\ref{c:toricFover}.\end{proof}

\section{Declarations}

\subsection*{Funding} RD was funded by a Royal Society University Research Fellowship. EL held a visiting position at Churchill College in Cambridge, she was also supported by CNRS (IEA-International Emerging Actions grant number 295351) and a CIMI mobility grant. 

\subsection*{Conflicts of interest/Competing interests } On behalf of all authors, the corresponding author states that there is no conflict of interest.

\subsection*{Availability of data and material} Not applicable.

\subsection*{Code availability } Not applicable.

\subsection*{Authors' contributions } Both authors contributed equally.

\bibliographystyle{abbrv}

%\bibliography{stablefibrations}
%\bibliographystyle{amsplain}

\end{document}